\documentclass[12pt]{amsart}
\usepackage{amsmath, amssymb, amsthm, verbatim, hyperref, graphicx}
\usepackage[all, cmtip]{xy} 

\begin{document}

\title{The lower central series of the symplectic quotient of a free associative algebra}
\author{Ben Bond, David Jordan}
\address{Department of Mathematics, MIT, Cambridge, MA 02139\\
Department of Mathematics, The University of Texas at Austin, Austin TX 78712}

\begin{abstract}
We study the lower central series filtration $L_k$ for a symplectic quotient $A=A_{2n}/\langle\omega\rangle$ of the free algebra $A_{2n}$ on $2n$ generators, where $\omega=\sum [x_i,x_{i+n}]$.  We construct an action of the Lie algebra $H_{2n}$ of Hamiltonian vector fields on the associated graded components of the filtration, and use this action to give a complete description of the reduced first component $\bar{B}_1(A)= A/(L_2 + AL_3)$ and the second component $B_2=L_2/L_3$, and we conjecture a description for the third component $B_3=L_3/L_4$.

\noindent Keywords:  Non-commutative geometry, Hamiltonian vector fields, lower central series
\end{abstract}

\maketitle

\section{Introduction and Results}

\newcounter{counter}
\numberwithin{counter}{section}

\newtheorem{thm}[counter]{Theorem}
\newtheorem{mydef}[counter]{Definition}
\newtheorem{lma}[counter]{Lemma}
\newtheorem{cor}[counter]{Corollary}
\newtheorem{prop}[counter]{Proposition}
\newtheorem{conj}[counter]{Conjecture}

\theoremstyle{remark}
\newtheorem{rem}[counter]{Remark}

\newcommand{\CC}{\mathbb{C}}
\newcommand{\ot}{\otimes}
\newcommand{\edit}[1]{\marginpar{#1}}
\newcommand{\spn}{\mathfrak{sp}_{2n}}
\newcommand{\x}{\underline{x}}
\newcommand{\y}{\underline{y}}
\newcommand{\z}{\underline{z}}

The lower central series of an associative algebra $A$ is the descending filtration by Lie ideals, $L_1(A):=A$, and $L_k(A):= [A,L_{k-1}(A)]$.  We denote by $M_k$ the two-sided associative ideal generated by $L_k$, and by $B_k(A)$ and $N_k(A)$ the associated graded components $B_{k}(A):=L_k(A)/L_{k+1}(A)$, and $N_k(A):=M_k(A)/M_{k+1}(A)$.  We also denote by $\bar{B}_1$ the quotient $\bar{B}_1:= A/(M_3+L_2)$.  The study of the components $B_k$ was initiated in \cite{FS}, and continued in a series of papers \cite{DE}, \cite{AJ}, \cite{BJ}.

Let $A_{m}$ denote the free algebra with generators $x_1,\ldots,  x_{m}$.  In the algebra $A_{2n}$, we define: $$\omega:=\frac{1}{2}\sum_{i=1}^n [x_i,x_{i+n}],$$
and denote by $\langle \omega \rangle$ the two-sided associative ideal generated by $\omega$.
\begin{mydef} The \emph{symplectic quotient} of the free algebra is: $$A'_{2n}:=A_{2n}/\langle \omega \rangle.$$ 
\end{mydef}

In \cite{FS}, an isomorphism of associative algebras was constructed between $A_m/M_3$ and the algebra $\Omega^{even}_{*}(\mathbb{C}^m)$ of even-degree differential forms, with Fedosov product $a\ast b:= ab + da\wedge db$.  This isomorphism maps $\omega\in A_{2n}$ to the standard symplectic form on $\CC^{2n}$ which, by abuse of notation, we also denote $\omega$.

The study of the components $B_k(A_m)$ and $N_k(A_m)$ has relied heavily upon an action of the Lie algebra, $W_m$, of polynomial vector fields on $\mathbb{C}^m$.  It is shown in \cite{DE} that each $B_k$ has a finite-length Jordan-H\"older series with respect to this action, whose composition factors are so-called ``tensor field modules".  In \cite{AJ}, and \cite{BJ}, bounds are given on the degree of these modules, which allow the components to be computed explicitly in many examples.

In the present paper, we construct an action of the Lie algebra $H_{2n}$ of Hamiltonian vector fields (i.e. vector fields which fix the form $\omega$) on the components $B_k(A'_{2n})$ and $N_k(A'_{2n})$.  By studying this action, we are able to generalize many of the results about $A_m$ to the symplectic quotients $A'_{2n}$.

In particular, the general framework discussed in Section \ref{action} yields isomorphisms,
$$A'_{2n}/M_3(A'_{2n}) \cong \Omega^{even}_{*}(\mathbb{C}^{2n})/\langle \omega \rangle,$$
$$\bar{B}_1(A_{2n}') \cong \Omega^{even}_{*}(\mathbb{C}^{2n})/( \Omega^{even,+}_{closed}(\mathbb{C}^{2n}) + \langle \omega \rangle),$$ 
$$ B_2(A'_{2n}) \cong \Omega^{even,+}_{closed}(\mathbb{C}^{2n})/ (\Omega^{even,+}_{closed}(\mathbb{C}^{2n}) \cap \langle \omega \rangle).$$

The irreducible representations of $H_{2n}$ which appear in the present work are certain tensor field modules $\mathcal{F}_\lambda$ associated to Young diagrams $\lambda \neq (1^k)$, and the irreducible sub-quotients of $\mathcal{F}_{(1^k)}$ (see Section \ref{f lambda def} for details). In fact, we show
\begin{prop}
As $H_{2n}$-modules, each $B_k(A'_{2n})$ and $N_k(A'_{2n})$, for $k\geq 2$, has a finite length Jordan H\"older series, consisting of tensor field modules $\mathcal{F}_\lambda$, with $\lambda \neq (1^k)$, and of subquotients $\mathcal{F}_{(1^k)}/T_k$, $Y_k/X_k$, $Z_k/X_k$, $X_k$ of $\mathcal{F}_{(1^k)}$.
\end{prop}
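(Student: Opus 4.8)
The plan is to leverage the existing result from \cite{DE} that each $B_k(A_m)$ and $N_k(A_m)$ has a finite-length Jordan–Hölder series of tensor field modules over the larger Lie algebra $W_m$, and to transport this through the quotient map $A_{2n} \twoheadrightarrow A'_{2n}$ while restricting the group action from $W_{2n}$ down to its Hamiltonian subalgebra $H_{2n}$. First I would observe that the components $B_k(A'_{2n})$ and $N_k(A'_{2n})$ are subquotients of the corresponding $B_k(A_{2n})$ and $N_k(A_{2n})$ (or more precisely, quotients obtained by killing the image of $\langle\omega\rangle$), and that the $H_{2n}$-action constructed in Section \ref{action} is precisely the restriction of the $W_{2n}$-action along the inclusion $H_{2n}\hookrightarrow W_{2n}$. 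Since a finite-length series over $W_{2n}$ restricts to a finite-length series over $H_{2n}$ (finiteness of length is preserved under restriction), it suffices to understand how each $W_{2n}$ composition factor, namely a tensor field module, decomposes as an $H_{2n}$-module.

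The key technical input, which I would take from the referenced structure theory in Section \ref{f lambda def}, is the branching of tensor field modules $\mathcal{F}_\lambda$ from $W_{2n}$ to $H_{2n}$. The guiding principle is that $H_{2n}$ contains $\mathfrak{sp}_{2n}$ as the degree-zero (linear) part, so the decomposition is governed by the representation theory of $\spn$ acting on the Young-diagram-labelled pieces. For $\lambda \neq (1^k)$ the module $\mathcal{F}_\lambda$ remains irreducible upon restriction, whereas for $\lambda = (1^k)$ the antisymmetric powers interact nontrivially with the symplectic form $\omega$: wedging and contracting against $\omega$ produces the chain of submodules
\begin{equation*}
X_k \subseteq Z_k,\ Y_k \subseteq \mathcal{F}_{(1^k)},
\end{equation*}
whose subquotients $\mathcal{F}_{(1^k)}/T_k$, $Y_k/X_k$, $Z_k/X_k$, and $X_k$ are the genuinely new irreducibles that appear. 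I would then verify, using the explicit generators and relations for $H_{2n}$, that these four subquotients are irreducible as $H_{2n}$-modules and that no further refinement is possible.

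To assemble the proof I would proceed in the following steps. First, record that $B_k(A'_{2n})$ is a quotient of $B_k(A_{2n})$ as $H_{2n}$-modules, and likewise for $N_k$; combined with the $W_{2n}$-Jordan–Hölder series of \cite{DE}, this gives a finite $H_{2n}$-stable filtration whose subquotients are the $H_{2n}$-restrictions of tensor field modules $\mathcal{F}_\lambda$, possibly with some factors killed by the passage to the quotient. Second, apply the branching analysis above to each such factor, replacing every $\mathcal{F}_{(1^k)}$ by its refinement into $\mathcal{F}_{(1^k)}/T_k$, $Y_k/X_k$, $Z_k/X_k$, $X_k$, and leaving every $\mathcal{F}_\lambda$ with $\lambda \neq (1^k)$ intact. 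Refining each factor of a finite-length filtration yields another finite-length filtration, so the resulting list is exactly of the claimed form. The main obstacle, and the step requiring the most care, is the branching computation for $\mathcal{F}_{(1^k)}$: one must show precisely which of the symplectic-form-twisted submodules are genuinely distinct and irreducible over $H_{2n}$ rather than over the full $\spn$, since $H_{2n}$ is strictly larger than its linear part and the higher-degree Hamiltonian vector fields can in principle identify or connect pieces that $\spn$ alone would keep separate. I would resolve this by an explicit weight-vector argument, exhibiting generators mapping between the relevant graded pieces and checking that $X_k$, the images of the de Rham differential and of wedging with $\omega$, and their intersections account for all $H_{2n}$-invariant subspaces.
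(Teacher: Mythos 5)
Your proposal is correct and follows essentially the same route as the paper's proof: cite \cite{DE} for the finite $W_{2n}$-Jordan--H\"older series of tensor field modules $\mathcal{G}_{\underline{\mu}}$, decompose each factor as an $H_{2n}$-module via the branching formula \eqref{rest-form}, invoke irreducibility of $\mathcal{F}_\lambda$ for $\lambda\neq(1^k)$ together with the finite-length structure of $\mathcal{F}_{(1^k)}$ (Theorems \ref{jhforfk}, \ref{extracases}, which you may simply cite from \cite{R75} rather than re-derive by weight-vector arguments), and pass to the quotient by $\langle\omega\rangle$ using Lemma \ref{actionlemma}. One small caution: your parenthetical claim that ``finiteness of length is preserved under restriction'' is false for general subalgebras---it holds here only because each factor $\mathcal{G}_{\underline{\mu}}$ decomposes into finitely many $\mathcal{F}_\lambda$ by \eqref{rest-form}---but since your factor-by-factor branching analysis supplies exactly this fact, the argument stands.
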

Our main results are a computation of the Jordan-H\"older series for the modules $A'_{2n}/M_3$, $\bar{B}_1(A'_{2n})$, $B_2(A'_{2n})$, and conjecturally for $B_3(A_{2n}')$.  We have: 
\begin{thm}\label{AmodM3thm}
The $H_{2n}$-module composition factors of $A'_{2n}/M_3(A'_{2n})$ are: 
\begin{align*}
\mathcal{F}_{(1^k)}/T_k,\, Y_k/X_k,\, Z_k/X_k,\, X_k   \hspace{.5 in} & \textrm{for k even, } 2\le k \le n-1,\\
\mathcal{F}_{(1^n)}/X_n,\, X_n \hspace{.5 in} &\textrm{if $n$ even}.\\
\mathcal{F}_{0}/X_0,\, X_0.\hspace{.5 in} &
\end{align*}
\end{thm}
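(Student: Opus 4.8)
The plan is to reduce the computation entirely to the Fedosov model of Section~\ref{action} and then apply the Lefschetz decomposition. First I would invoke the isomorphism $A'_{2n}/M_3(A'_{2n}) \cong \Omega^{even}_*(\CC^{2n})/\langle\omega\rangle$, so that the problem becomes the determination of the $H_{2n}$-module structure of the even forms modulo the Fedosov ideal generated by $\omega$. The crucial simplification is that $\omega$ is closed: since $d\omega=0$, the Fedosov product degenerates to $\omega\ast b=\omega\wedge b$ and $a\ast\omega=a\wedge\omega$, and more generally $a\ast\omega\ast b=\omega\wedge(a\ast b)$. Hence the two-sided Fedosov ideal $\langle\omega\rangle$ coincides with the ordinary wedge ideal $\omega\wedge\Omega^{even}_*$, that is, with the image $L(\Omega^{even}_*)$ of the Lefschetz operator $L=\omega\wedge(-)$.

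Next I would apply the Lefschetz (primitive) decomposition $\Omega^m=\bigoplus_{j\ge 0}L^j P^{m-2j}$, where $P^m$ denotes the primitive $m$-forms. In each degree $m$ this identifies $\Omega^m/L\Omega^{m-2}$ with $P^m$, so that
\[
\Omega^{even}_*/\langle\omega\rangle \;\cong\; \bigoplus_{k\ \mathrm{even}} P^{k}
\]
as $H_{2n}$-modules, the decomposition being $Sp_{2n}$-equivariant and hence $H_{2n}$-equivariant. Because the fiber of $P^k$ is precisely the irreducible symplectic representation $V_{(1^k)}$, each summand is the tensor field module $\mathcal{F}_{(1^k)}$ of Section~\ref{f lambda def}; moreover $P^k=0$ for $k>n$, which cuts the sum off and pins down the range of degrees appearing.

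It then remains to expand each $\mathcal{F}_{(1^k)}$ into its $H_{2n}$-composition factors. Here I would use the filtration of $\mathcal{F}_{(1^k)}$ by the submodules $X_k\subseteq Y_k,Z_k$ arising from the de~Rham differential and its symplectic adjoint, as set up in Section~\ref{f lambda def}, together with the Jordan--H\"older statement of the Proposition above, which guarantees that the successive quotients $\mathcal{F}_{(1^k)}/T_k$, $Y_k/X_k$, $Z_k/X_k$, $X_k$ are irreducible. For $k$ in the interior range $2\le k\le n-1$ all four factors are present, and the separation of these factors according to whether the underlying primitive forms are \emph{closed} (contributing to $B_2(A'_{2n})$) or not (contributing to $\bar B_1(A'_{2n})$) matches the isomorphisms for $\bar B_1$ and $B_2$ recorded in Section~\ref{action}.

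Finally I would treat the two boundary cases separately. The degree-zero summand $P^0=\Omega^0=\CC[x_1,\dots,x_{2n}]$ is the function module $\mathcal{F}_0$, whose only proper submodule is the line of constants $X_0$, giving the two factors $\mathcal{F}_0/X_0$ and $X_0$. When $n$ is even the top even primitive degree $k=n$ occurs, and there the codifferential structure degenerates (the complex truncates at the middle dimension), collapsing the generic four factors to the two factors $\mathcal{F}_{(1^n)}/X_n$ and $X_n$. I expect the main obstacle to be precisely this analysis of the internal structure of $\mathcal{F}_{(1^k)}$ --- verifying that the de~Rham/codifferential filtration has exactly the stated length with irreducible subquotients, and in particular correctly diagnosing the degeneration at $k=n$; by contrast, the reduction to primitive forms via $d\omega=0$ is routine.
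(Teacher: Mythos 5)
Your proposal is correct and takes essentially the same route as the paper: reduce to $\Omega^{even}_{*}(\CC^{2n})/\langle\omega\rangle$ via Corollary \ref{B2 C iso}, identify this quotient with $\bigoplus_{k \textrm{ even},\, 0\le k\le n}\mathcal{F}_{(1^k)}$ (the paper derives your Lefschetz/primitive decomposition from the restriction formula \eqref{rest-form} together with the observation that $\mathcal{F}_{(1^k)}$ consists of $\mathfrak{sl}_2$-singular vectors, hence cannot meet the image of $\omega\wedge -$), and then quote Theorems \ref{jhforfk} and \ref{extracases} for the composition factors in the interior and boundary degrees. The only justification to repair is your equivariance claim: $H_{2n}$-equivariance of the primitive decomposition does not follow from $\mathfrak{sp}_{2n}$-equivariance (which is strictly weaker, as $\mathfrak{sp}_{2n}\cong H_{2n}^0\subset H_{2n}$); instead it holds because $H_{2n}$ preserves $\omega$ and the Poisson bivector, hence commutes with $L=\omega\wedge(-)$ and $\iota_\pi$, as noted in Section \ref{tens-field-sec}.
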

\begin{thm}\label{barB1}
The $H_{2n}$-module composition factors of $\bar{B}_1(A'_{2n})$ are: 
\begin{align*} 
\mathcal{F}_{(1^k)}/T_k,\, Y_k/X_k\hspace{.5 in} & \textrm{for $k$ even, } 2\le k \le n-1,\\
\mathcal{F}_{(1^n)}/X_n \hspace{.5 in} &\textrm{if $n$ even}.\\
\mathcal{F}_{0}/X_0,\, X_0.\hspace{.5 in} &
\end{align*}
\end{thm}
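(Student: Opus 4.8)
The plan is to realize $\bar B_1(A'_{2n})$ as a quotient of $A'_{2n}/M_3$ by the submodule $B_2(A'_{2n})$, and then read off its composition factors from Theorem \ref{AmodM3thm}. The three isomorphisms of Section \ref{action} assemble into a short exact sequence of $H_{2n}$-modules
$$0 \to B_2(A'_{2n}) \to A'_{2n}/M_3(A'_{2n}) \to \bar B_1(A'_{2n}) \to 0 .$$
Indeed, under $A'_{2n}/M_3\cong\Omega^{even}_*/\langle\omega\rangle$ the natural projection onto $\bar B_1 = A'_{2n}/(M_3+L_2)$ has kernel $(\Omega^{even,+}_{closed}+\langle\omega\rangle)/\langle\omega\rangle$, which by the second isomorphism theorem equals $\Omega^{even,+}_{closed}/(\Omega^{even,+}_{closed}\cap\langle\omega\rangle)\cong B_2$, while its image is $\Omega^{even}_*/(\Omega^{even,+}_{closed}+\langle\omega\rangle)\cong\bar B_1$. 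Since all modules involved have finite length, Jordan--H\"older multiplicities are additive along this sequence, so it suffices to locate the factors of the closed-forms submodule $B_2$ inside the list of Theorem \ref{AmodM3thm} and remove them.

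First I would make the form-degree grading explicit. Because $\omega$ is $d$-closed it is central for the Fedosov product, so $\langle\omega\rangle=\omega\wedge\Omega^{even}$ is homogeneous; hence $A'_{2n}/M_3$, its closed-forms submodule, and $\bar B_1$ all split as direct sums over even form-degrees $k$, with $\Omega^k\cong\mathcal F_{(1^k)}$ the tensor field module on the column diagram. Writing $L=\omega\wedge\colon\Omega^{k-2}\to\Omega^k$ for the Lefschetz operator, hard Lefschetz on the fibre $\Lambda^{\bullet}(\mathbb C^{2n})^{*}$ shows $L$ is surjective once the target degree satisfies $k\ge n+1$, so the degree-$k$ part of $A'_{2n}/M_3$ vanishes for $k>n$; this is precisely the truncation in Theorem \ref{AmodM3thm}.

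In each surviving degree the submodule $B_2$ is the image of the closed forms $\Omega^k_{closed}=\ker d$, which in the filtration of $\mathcal F_{(1^k)}$ recorded in Section \ref{f lambda def} is the step $Z_k$ (degenerating to $Z_n=X_n$ in the boundary degree $k=n$). Its composition factors are therefore $X_k$ and $Z_k/X_k$ for even $2\le k\le n-1$, together with $X_n$ when $n$ is even; note that $B_2$ has no degree-$0$ part, so both factors $\mathcal F_0/X_0$ and $X_0$ of the functions module $\mathcal F_0=\Omega^0$ survive in $\bar B_1$. Deleting these factors from the list of Theorem \ref{AmodM3thm} leaves exactly $\mathcal F_{(1^k)}/T_k$ and $Y_k/X_k$ for even $2\le k\le n-1$, the factor $\mathcal F_{(1^n)}/X_n$ when $n$ is even, and $\mathcal F_0/X_0$, $X_0$, which is the assertion of Theorem \ref{barB1}.

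I expect the main obstacle to be the degree-by-degree identification of the closed-forms submodule with the correct filtration step $Z_k$ of $\mathcal F_{(1^k)}$. This requires re-entering the analysis behind Theorem \ref{AmodM3thm} and controlling the de Rham differential $d$ against the Lefschetz $\mathfrak{sl}_2$-action generated by $L$ and its adjoint $\Lambda$: since $d$ and $L$ interact nontrivially, the four factors $X_k,\,Y_k/X_k,\,Z_k/X_k,\,\mathcal F_{(1^k)}/T_k$ occur as genuine subquotients rather than summands, and one must verify that closedness selects exactly the pair $\{X_k,\,Z_k/X_k\}$ spanning $Z_k$. The boundary degrees $k=n$, where hard Lefschetz degenerates to an isomorphism and collapses the filtration so that $Z_n=X_n$ and $Y_n=\mathcal F_{(1^n)}$, and $k=0$, where $\mathcal F_0$ carries no closed positive part, need separate but routine bookkeeping.
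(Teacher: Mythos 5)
Your route is, at bottom, the paper's route: the short exact sequence $0\to B_2(A'_{2n})\to A'_{2n}/M_3\to\bar B_1(A'_{2n})\to 0$ with additivity of Jordan--H\"older multiplicities is just a repackaging of the paper's direct computation in Section \ref{Bkdescs}, where $\bar B_1(A'_{2n})\cong\mathcal{F}_{(0)}\oplus\bigoplus_{k \text{ even}}\mathcal{F}_{(1^k)}/Z_k$ is read off from $\Omega^k(\CC^{2n})/\langle\omega\rangle\cong\mathcal{F}_{(1^k)}$ together with the identification of the image of the closed forms inside $\mathcal{F}_{(1^k)}$. The genuine gap is that you leave exactly this identification --- which you yourself call ``the main obstacle'' --- unproved, and it is where all of the content of the theorem lives; without it the ``subtraction of factors'' cannot begin. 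The paper's proof of this step (the Proposition of Section \ref{Bkdescs}) is short but nontrivial: one introduces $\widetilde Z_k=\{\alpha\in\mathcal{F}_{(1^k)} : d\alpha\in\Omega^{k-1}_{ex}\wedge\langle\omega\rangle\}$, notes that $\widetilde Z_k\supsetneq X_k$ and $\widetilde Z_k\cap Y_k=X_k$, and invokes the uniqueness of such a submodule in the lattice of Theorem \ref{jhforfk} to conclude $\widetilde Z_k=Z_k$; an integration-by-parts argument ($d\alpha=(d\nu)\omega$ implies $\alpha=\nu\omega+d\eta$) then matches $\widetilde Z_k$ with the image of the closed forms. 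Nothing in your proposal substitutes for this argument.

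Moreover, the boundary degree $k=n$ is not ``routine bookkeeping,'' and your specific claims there are wrong. By the paper's definitions it is $Y_n$, not $Z_n$, that collapses to $X_n$ (it is $V_{(n-k-1)}$ that ceases to exist at $k=n$), and the assertion that the image of the closed $n$-forms in $\mathcal{F}_{(1^n)}$ is $X_n$ does not follow from hard Lefschetz. Indeed it fails: already for $n=2$, the closed form $\gamma=d(x_1x_2\,dx_3)$ satisfies $\gamma\equiv\gamma-\tfrac{x_2}{2}\omega \pmod{\langle\omega\rangle}$, and $\gamma-\tfrac{x_2}{2}\omega$ is a primitive $2$-form with
\begin{equation*}
d\left(\gamma-\tfrac{x_2}{2}\omega\right)=-\tfrac{1}{2}\,dx_2\wedge\omega\neq 0,
\end{equation*}
so the image of the closed forms in $\mathcal{F}_{(1^n)}$ contains non-closed elements and is strictly larger than $X_n$. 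The paper itself does not argue this degree via Lefschetz degeneration at all; it appeals instead to Theorem \ref{extracases} (the length-two structure of $\mathcal{F}_{(1^n)}$), and that is in fact the thinnest point of the published argument. Whatever one concludes about the paper's own handling, the justification you propose at $k=n$ would fail, so your proof cannot deliver the factor $\mathcal{F}_{(1^n)}/X_n$ (for $n$ even) as stated.
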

\begin{thm}\label{B2}
The $H_{2n}$-module composition factors of $B_2(A'_{2n})$ are:
\begin{align*}
Z_k/X_k,\,X_k \hspace{.5 in}& \textrm{for $k$ even, } 2\le k \le n-1,\\ 
X_n \hspace{.5 in} &\textrm{if $n$ even}.
\end{align*}
\end{thm}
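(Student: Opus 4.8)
The plan is to deduce Theorem \ref{B2} from the two preceding theorems, rather than to recompute anything directly, by exploiting the three isomorphisms for $A'_{2n}/M_3$, $\bar{B}_1(A'_{2n})$ and $B_2(A'_{2n})$ recorded in the introduction, all of which are $H_{2n}$-equivariant by the general framework of Section \ref{action}. The natural surjection $A'_{2n}/M_3 \twoheadrightarrow \bar{B}_1(A'_{2n})$ corresponds on the form side to the projection $\Omega^{even}_*(\CC^{2n})/\langle\omega\rangle \to \Omega^{even}_*(\CC^{2n})/(\Omega^{even,+}_{closed}+\langle\omega\rangle)$, whose kernel is
$$\frac{\Omega^{even,+}_{closed}+\langle\omega\rangle}{\langle\omega\rangle}\;\cong\;\frac{\Omega^{even,+}_{closed}}{\Omega^{even,+}_{closed}\cap\langle\omega\rangle},$$
and the third isomorphism of the introduction identifies this kernel with $B_2(A'_{2n})$. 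First I would assemble this data into a short exact sequence of $H_{2n}$-modules
$$0 \longrightarrow B_2(A'_{2n}) \longrightarrow A'_{2n}/M_3(A'_{2n}) \longrightarrow \bar{B}_1(A'_{2n}) \longrightarrow 0,$$
verifying that the three identifications intertwine the $H_{2n}$-action so that both maps are equivariant.

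Granting the sequence, the remainder is bookkeeping with composition factors. By the Proposition of the introduction each of the three terms has finite Jordan--H\"older length, hence a well-defined multiset of composition factors, and in a short exact sequence the factors of the middle term are the disjoint union of those of the two outer terms. Thus the factors of $B_2(A'_{2n})$ form the difference of the factors of $A'_{2n}/M_3$ and those of $\bar{B}_1(A'_{2n})$. Substituting the lists of Theorem \ref{AmodM3thm} and Theorem \ref{barB1}, the factors $\mathcal{F}_{(1^k)}/T_k$ and $Y_k/X_k$ for $k$ even with $2\le k\le n-1$, the factor $\mathcal{F}_{(1^n)}/X_n$ when $n$ is even, and the pair $\mathcal{F}_0/X_0,\ X_0$ occur in both lists and cancel. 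What survives is exactly $Z_k/X_k$ and $X_k$ for $k$ even with $2\le k\le n-1$, together with $X_n$ when $n$ is even, which is the asserted answer.

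Two points call for genuine care, and this is where I would focus. First, the cancellation of multisets is legitimate only if the shared factors occur with equal multiplicity, which requires the tensor field modules $\mathcal{F}_\lambda$ and the subquotients $X_k$, $Y_k/X_k$, $Z_k/X_k$, $\mathcal{F}_{(1^k)}/T_k$ to be pairwise non-isomorphic; I would record this as a preliminary lemma drawing on the classification and degree data of Section \ref{f lambda def}. Second, and this is the main obstacle, one must know the kernel above is \emph{all} of $B_2(A'_{2n})$ and not a proper quotient: concretely, that the image of $L_2$ in $A'_{2n}/M_3$ is precisely $\Omega^{even,+}_{closed}$ modulo $\langle\omega\rangle$, equivalently that $L_2\cap M_3 = L_3$ after passing to the symplectic quotient. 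This exactness is exactly what the general framework of Section \ref{action} supplies, so the weight of the proof rests on invoking that framework correctly rather than on any new computation with differential forms.
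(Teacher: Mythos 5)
Your proposal is correct, but it reaches Theorem \ref{B2} by a genuinely different route than the paper. The paper does not derive Theorem \ref{B2} from Theorems \ref{AmodM3thm} and \ref{barB1}; instead all three are proved simultaneously in Section \ref{Bkdescs} from a single computation: using the identification $\Omega^k(\mathbb{C}^{2n})/\langle\omega\rangle \cong \mathcal{F}_{(1^k)}$ and the key Proposition that $Z_k = \mathcal{F}_{(1^k)}\cap\langle\omega\rangle + X_k$ (proved by an integration-by-parts argument together with the uniqueness of $Z_k$ in the submodule lattice of Theorem \ref{jhforfk}), the paper obtains the explicit decomposition of $B_2(A'_{2n})$ as a direct sum of the modules $Z_k$ over even $k$, and then reads off the factors $X_k$, $Z_k/X_k$ (and $X_n$ when $n$ is even) from Theorems \ref{jhforfk} and \ref{extracases}. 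Your argument instead treats the two preceding theorems as black boxes, forms the short exact sequence relating $B_2(A'_{2n})$, $A'_{2n}/M_3$, and $\bar{B}_1(A'_{2n})$, and cancels Jordan--H\"older multisets. This is legitimate and non-circular, since the paper's proofs of Theorems \ref{AmodM3thm} and \ref{barB1} nowhere invoke Theorem \ref{B2}, and the cancellation does produce exactly the asserted list. What your route buys is formality: no new computation with differential forms. What it hides is that the computational core has merely been relocated, since Theorem \ref{barB1} is itself proved in the paper via the identification of $\bar{B}_1$ with the sum of the quotients $\mathcal{F}_{(1^k)}/Z_k$, i.e.\ via the very same Proposition about $Z_k$; so relative to the paper's organization no work is saved, only repackaged.

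Two refinements on the points you flagged. First, the exactness worry is resolved more cheaply than you suggest: you do not need the key-lemma statement $L_2\cap M_3 = L_3$ for $A'_{2n}$ as an \emph{input}. Once you check that the first two isomorphisms of the introduction intertwine the natural projection $A'_{2n}/M_3 \to \bar{B}_1(A'_{2n})$ with the form-side projection (true, because both are induced by the single Feigin--Shoikhet map), the kernel is literally $(\Omega^{even,+}_{closed}+\langle\omega\rangle)/\langle\omega\rangle \cong \Omega^{even,+}_{closed}/(\Omega^{even,+}_{closed}\cap\langle\omega\rangle)$, which Corollary \ref{B2 C iso}(1) identifies with $B_2(A'_{2n})$; the key-lemma statement for $A'_{2n}$ then follows as a \emph{consequence} (a surjection between finite-length modules that are abstractly isomorphic is an isomorphism), rather than being needed as a hypothesis. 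Second, the pairwise non-isomorphism lemma you propose is genuinely necessary for the multiset subtraction and is true: the irreducibles $X_k$, $Y_k/X_k$, $Z_k/X_k$, $\mathcal{F}_{(1^k)}/T_k$ for the various $k$ are distinguished by their lowest $\mathfrak{sp}_{2n}$-weights (equivalently, by Rudakov's classification), a fact the paper also uses implicitly throughout.
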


\begin{conj}\label{B3thm}
The $H_{2n}$-module composition factors of $B_3(A'_{2n})$ are:
\begin{align*}
\mathcal{F}_{(2,1^k)},\,\mathcal{F}_{(1^k)}/T_k,\,Z_k/X_k \hspace{.5 in}& \textrm{for $k$ odd, } 1\le k\le n-1.
\end{align*}
\end{conj}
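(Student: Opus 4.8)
The plan is to deduce the structure of $B_3(A'_{2n})$ from that of the free-algebra component $B_3(A_{2n})$, whose $W_{2n}$-module structure is known from \cite{DE,BJ}, by tracking two operations: the quotient by the symplectic ideal $\langle\omega\rangle$, and the restriction of the vector-field action from $W_{2n}$ to the Hamiltonian subalgebra $H_{2n}$. Since $\pi\colon A_{2n}\to A'_{2n}$ is a surjective algebra (hence Lie) homomorphism, it satisfies $L_k(A'_{2n})=\pi(L_k(A_{2n}))$ for all $k$, so $\pi$ induces a surjection $B_3(A_{2n})\twoheadrightarrow B_3(A'_{2n})$ whose kernel is
\[
K=\bigl(L_3(A_{2n})\cap(L_4(A_{2n})+\langle\omega\rangle)\bigr)/L_4(A_{2n}).
\]
Hence $B_3(A'_{2n})\cong B_3(A_{2n})/K$ as $H_{2n}$-modules, and the computation reduces to (i) restricting the Jordan--H\"older factors of $B_3(A_{2n})$ to $H_{2n}$, and (ii) pinning down the submodule $K$ cut out by the relation $\omega$.

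For step (i), I would restrict each factor of $B_3(A_{2n})$, a $W_{2n}$ tensor field module $\mathcal{F}_\lambda$, to $H_{2n}$. Because the linear part of $H_{2n}$ is $\spn$ (the quadratic Hamiltonians) rather than all of $\mathfrak{gl}_{2n}$, this amounts to decomposing the fiber of $\mathcal{F}_\lambda$, a $\mathfrak{gl}_{2n}$-representation, under the branching $\mathfrak{gl}_{2n}\downarrow\spn$, together with the extra contractions against $\omega$ that the $H_{2n}$-action introduces. The Proposition guarantees the result is again a finite sum of the allowed factors ($\mathcal{F}_\lambda$ with $\lambda\ne(1^k)$, and the subquotients of $\mathcal{F}_{(1^k)}$), and the hook shapes $(2,1^k)$ in the conjecture should emerge directly from the corresponding factors of $B_3(A_{2n})$.

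For step (ii), the key is to describe $\langle\omega\rangle\cap L_3(A_{2n})$ modulo $L_4(A_{2n})$ as an $H_{2n}$-submodule. Here I would exploit the same exterior-algebra model that proves Theorems \ref{AmodM3thm}--\ref{B2}: the $H_{2n}$-orbit of the class of $\omega$ is governed by wedging and contracting with the symplectic form in $\Omega^{even}_*(\CC^{2n})$, which should identify exactly which factors of $B_3(A_{2n})$ survive in the quotient. I expect $K$ to remove the even-$k$ families together with the $\spn$-trace parts contracted away by $\omega$, leaving the three families $\mathcal{F}_{(2,1^k)}$, $\mathcal{F}_{(1^k)}/T_k$, and $Z_k/X_k$ for $k$ odd, $1\le k\le n-1$; the parity shift from even $k$ in $B_2$ to odd $k$ here should reflect the extra commutator defining $L_3$.

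The hard part will be step (ii) and the degree bounds it requires. Because $B_3$ is invisible in the Feigin--Shoikhet isomorphism $A/M_3\cong\Omega^{even}_*$ used for the lower components, controlling $K$ demands sharp upper bounds on the polynomial degrees of the composition factors of $B_3(A'_{2n})$—extending the estimates of \cite{AJ,BJ}—in order to exclude spurious factors, plus an exact determination of the image of $\langle\omega\rangle$ in the third graded piece. Matching such bounds against an independent Hilbert series or $\spn$-character computation, checked directly for small $n$, is what would be needed to upgrade the statement to a theorem; the present gap between the available degree bounds and the conjectured list is precisely why it is stated as a conjecture.
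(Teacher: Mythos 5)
Your reduction framework is the same one the paper uses: your kernel formula $B_3(A'_{2n})\cong B_3(A_{2n})/K$ agrees (via the modular law) with Lemma \ref{actionlemma}, and your step (i) --- restricting $B_3(A_{2n})\cong\bigoplus_{i=1}^n\mathcal{G}_{\underline{(2,1^{2i-1})}}$ (Theorem 1.8 of \cite{AJ}) through the branching formula \eqref{rest-form} --- is exactly how Section \ref{B3sec} begins. The proposal breaks down at step (ii), however, and in two distinct ways.

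First, your description of what $K$ removes is wrong in its specifics. After restriction, \emph{every} composition factor of $B_3(A_{2n})$ is either $\mathcal{F}_{(2,1^{k-2s})}$ or a subquotient of $\mathcal{F}_{(1^{k-2s})}$ with $k$ odd; there are no ``even-$k$ families'' present to be removed, and the odd parity is inherited from $B_3(A_{2n})$ rather than produced by the quotient. What $K$ actually kills is (a) every factor indexed by $s\geq 1$, because its distinguished generator carries an explicit factor of $\omega^s$, and (b) the $Y_k$-part (the factors $X_k$ and $Y_k/X_k$) of $\mathcal{F}_{(1^k)}$ at $s=0$, whose generator $\bar{y}_{k,0}$ is also divisible by $\omega$; this leaves exactly $\mathcal{F}_{(2,1^k)}$ and $\mathcal{F}_{(1^k)}/Y_k$, whose composition factors are $\mathcal{F}_{(1^k)}/T_k$ and $Z_k/X_k$, matching the conjectured list. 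Establishing this requires a method your outline does not supply: the Feigin--Shoikhet exterior-algebra model you propose to reuse does not see $B_3$ (as you yourself concede), and the paper instead exploits the surjection $A/M_3\otimes B_2\to B_3$ (valid since $[M_3,L_2]\subset L_4$) to define the maps $\phi_s,\psi_s$, and then constructs explicit $\mathfrak{sp}_{2n}$-highest-weight vectors $v_{k,s}$, $\bar{x}_{k,s}$, $\bar{y}_{k,s}$, $\bar{z}_{k,s}$ annihilated by all $\partial_l$, from which divisibility by $\omega$ can be read off directly. Second, and more fundamentally, even once this upper bound is proved the statement remains a \emph{conjecture}: one must show that the surviving generators $v_{k,0}$ and $\bar{z}_{k,0}$ are nonzero in $B_3(A'_{2n})$, i.e., do not fall into $L_4+\langle\omega\rangle$. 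Neither the degree bounds nor the Hilbert-series comparisons you gesture at produce this non-vanishing; the paper itself can verify it only by MAGMA computation for $2n=4,6$. So your proposal reproduces the reduction the paper performs, but leaves open both the exact determination of $K$ (which the paper completes by explicit highest-weight-vector calculus) and the non-vanishing statement (which is precisely the open content of the conjecture).
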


\begin{rem} In Section \ref{B3sec}, we show that the above list is an upper bound for the Jordan-H\"older series of $B_3(A'_{2n})$, so that the only ambiguity is whether the summands do in fact appear.  For $2n=4,6$, the conjecture is a theorem, based on MAGMA computations showing that certain cyclic generators of each term in the series are non-zero.
\end{rem}

The outline of this paper is as follows: In Section \ref{prelims}, we recall facts from the representation theory of the Lie algebras $W_n$ and $H_{2n}$ which we will need.  In Section \ref{action}, we construct an action of the Lie algebra $H_{2n}$ on each quotient $B_k(A'_{2n})$, and $\bar{B}_1(A'_{2n})$, and show that these are finite extensions of tensor field modules.  In Section \ref{Bkdescs}, we prove Theorems \ref{AmodM3thm}, \ref{barB1}, and \ref{B2}.  In Section \ref{B3sec} we describe $B_3(A_{2n})$ as a $H_{2n}$-module, and conjecture a description of $B_3(A'_{2n})$.

\subsection{Acknowledgments}
The authors would like to thank Pavel Etingof and Xiaoguang Ma for many helpful conversations during the course of this project; in particular the contents of Section 2.3 were explained to us by Pavel Etingof, who also conjectured Proposition 3.4.  Crucial evidence was collected using the Magma computational algebra system \cite{BCP}.  Finally, we are grateful to a careful referee for providing many helpful comments, and corrections to formulas in Proposition \ref{partialprop} and \ref{dcvprop}.

\section{Preliminaries} \label{prelims}
In this section we recall the Lie algebras $W_m$, $H_{2n}$, and $\mathfrak{sp}_{2n}$ of polynomial vector fields on $\CC^m$, Hamiltonian vector fields on $\CC^{2n}$, and Hamiltonian linear transformations of $\mathbb{C}^{2n}$, respectively.
\subsection{The symplectic Lie algebra and the restriction functor}
Let $E_{ij}\in\mathfrak{gl}_{2n}$ be the matrix with $1$ in the $i$-th row and $j$-th column and
$0$ everywhere else. Let $L_i$ be the dual basis to the diagonal span $\{E_{ii}\}_i$.
The Cartan subalgebra of $\mathfrak{sp}_{2n}$ is generated by:
$$H_i:=E_{i,i}-E_{i+n,i+n} , \mbox{ for } 1\le i\le n.$$
The positive roots are 
$$ \{L_i-L_j\}_{1\le i<j\le n}\,\, \bigcup \,\,\{L_i+L_j\}_{1\le i\le j \le n}.$$
The positive root vectors, with corresponding roots, are:
$$X_{ij}:=E_{ij}-E_{j+n,i+n} \hspace{.5in} L_i-L_j \mbox{ \,for } i<j$$
$$Y_{ij}:=E_{i,j+n}+E_{j,i+n} \hspace{.5 in} L_i+L_j \mbox{ \,for } i <j$$
$$U_i:=E_{i,i+n} \hspace{.5 in}  2L_i.$$
To simplify notation later in the paper, we abbreviate $X_{ii}:=H_i$, $Y_{ii}:=U_i$.  For $i=1,\ldots n$, we denote the $i$th fundamental weight $\rho_i(H_j):=1$ if $j\leq i$, $0$ else.

Recall that the irreducible representations of $\mathfrak{sp}_{2n}$ are parameterized by Young diagrams with at most $n$ rows.  To reduce notational clutter, we will use the same notation, $\lambda=(\lambda_1\geq\ldots \geq \lambda_n)$, to refer both to the Young diagram, and the corresponding irreducible representation of $\mathfrak{sp}_{2n}$.  To avoid confusion, we denote by $\underline{\mu}=[\mu_1\geq \cdots \geq \mu_m]$ the corresponding irreducible representation of $\mathfrak{gl}_m$.  We denote by $Y(k)$ the set of Young diagrams with at most $k$ rows.

Let us recall the restriction functor,
$$\operatorname{Res}^{\mathfrak{gl}_{2n}}_{\mathfrak{sp}_{2n}}:\mathfrak{gl}_{2n}\textrm{-mod} \to \mathfrak{sp}_{2n}\textrm{-mod}.$$
The restriction formula describes the restriction of simple $\mathfrak{gl}_{2n}$ modules; it is as follows. For $\lambda\in Y(n),$ and $\mu\in Y(2n)$, we define:
$$N_{\lambda\mu} =\sum_\eta N_{\eta \lambda \mu},$$
where $N_{\eta \lambda \mu}$ is the Littlewood-Richardson coefficient \cite{FH}, and the sum ranges over partitions $\eta=(\eta_1=\eta_2\geq\eta_3=\eta_4\geq \ldots )\in Y(2n)$ in which each part appears an even number of times.  Then we have:

\begin{thm}[\cite{FH}, p. 427] 
\label{FH thm}
The restriction from $\mathfrak{gl}_{2n}$ to $\mathfrak{sp}_{2n}$ of the representation $\underline{\mu}$ is:
$$\operatorname{Res}^{\mathfrak{gl}_{2n}}_{\mathfrak{sp}_{2n}}(\underline{\mu}) =\bigoplus_{\lambda\in Y(n)} N_{\lambda\mu} \lambda.$$ 
\end{thm}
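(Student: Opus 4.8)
The statement is purely character-theoretic, so the plan is to reduce it to an identity of symmetric functions and then invoke Littlewood's Cauchy identity. First I would pass to characters: since $\mathfrak{sp}_{2n}$ and $\mathfrak{gl}_{2n}$ share the torus $\{H_i\}$, a finite-dimensional $\mathfrak{gl}_{2n}$-module is determined as an $\mathfrak{sp}_{2n}$-module by its restricted character. The character of $\underline{\mu}$ is the Schur polynomial $s_\mu$ in the $2n$ eigenvalues, and restricting to the symplectic torus is the specialization to $(t_1,\dots,t_n,t_1^{-1},\dots,t_n^{-1})$; the irreducible $\mathfrak{sp}_{2n}$-characters are the symplectic Schur functions $\mathrm{sp}_\lambda(t_1,\dots,t_n)$, given by the type-$C_n$ Weyl character formula, which are linearly independent. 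Hence the theorem is equivalent to the Laurent-polynomial identity
$$s_\mu(t_1,\dots,t_n,t_1^{-1},\dots,t_n^{-1})=\sum_{\lambda\in Y(n)} N_{\lambda\mu}\,\mathrm{sp}_\lambda(t_1,\dots,t_n).$$

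The combinatorial heart is an identity in the ring of symmetric functions. I would establish the expansion $s_\mu=\sum_\lambda\bigl(\sum_\eta N_{\eta\lambda\mu}\bigr)\mathrm{sp}_\lambda$ of a Schur function into universal symplectic characters, where $\eta$ ranges over the partitions in which each part occurs an even number of times. This is dual to the Jacobi--Trudi-type determinant expressing $\mathrm{sp}_\lambda$ in the Schur basis, and the even-multiplicity condition on $\eta$ is precisely the content of Littlewood's Cauchy identity $\prod_{i<j}(1-x_ix_j)^{-1}=\sum_\eta s_\eta$; the fact that the product runs over $i<j$ (rather than the $i\le j$ governing the orthogonal case) is exactly what forces even column multiplicities instead of even rows. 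Expanding the products $s_\lambda\,s_\eta=\sum_\mu N_{\eta\lambda\mu}\,s_\mu$ by the Littlewood--Richardson rule and matching coefficients then yields the claimed multiplicity $N_{\lambda\mu}=\sum_\eta N_{\eta\lambda\mu}$.

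Finally I would specialize this universal identity to the $n$-variable symplectic point. The main obstacle is the passage from the stable (universal) identity to the genuine rank-$n$ statement: the universal $\mathrm{sp}_\lambda$ specializes to the irreducible $\mathfrak{sp}_{2n}$-character directly only when $\ell(\lambda)\le n$, and one must control the terms with $\ell(\lambda)>n$, which either vanish or must be folded back into $Y(n)$ by the type-$C$ Weyl-group straightening (the "modification rules"). Verifying that in the relevant range these out-of-range contributions cancel, so that the naive stable coefficients $\sum_\eta N_{\eta\lambda\mu}$ survive unchanged, is the delicate bookkeeping step; the remaining ingredients, namely linear independence of the symplectic characters and the Littlewood--Richardson expansion, are standard. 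An essentially equivalent alternative is to compute $N_{\lambda\mu}=\langle\,s_\mu(t,t^{-1}),\,\mathrm{sp}_\lambda\,\rangle$ directly via the Weyl integration formula over the compact symplectic group, where the same Littlewood identity reappears as the weight in the orthogonality calculation.
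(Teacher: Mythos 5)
The paper does not actually prove this statement: Theorem \ref{FH thm} is quoted verbatim from [FH, p.~427], so there is no internal proof to compare against, and your sketch must stand on its own as a proof of Littlewood's restriction rule. The first two thirds of your plan are the standard argument and are correct as far as they go: restriction is detected by restricted characters, the symplectic irreducible characters are linearly independent, the relevant Littlewood identity is indeed $\prod_{i<j}(1-x_ix_j)^{-1}=\sum_{\eta}s_\eta$ with $\eta$ conjugate-even (each part occurring an even number of times), and Littlewood--Richardson expansion yields the universal identity $s_\mu=\sum_{\lambda,\eta}N_{\eta\lambda\mu}\,\mathrm{sp}_\lambda$. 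Moreover, when $\ell(\mu)\le n$ this already finishes the proof with no further work: $N_{\eta\lambda\mu}\neq 0$ forces $\lambda\subseteq\mu$, hence $\ell(\lambda)\le n$, so every universal character that appears specializes directly to an irreducible $\mathfrak{sp}_{2n}$-character and no folding ever occurs.

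The genuine gap is the final step, exactly where you located the ``delicate bookkeeping'': the claim that for $n<\ell(\mu)\le 2n$ the out-of-range contributions cancel so that the stable coefficients $\sum_\eta N_{\eta\lambda\mu}$ survive unchanged. That claim is false, and no bookkeeping can repair it, because the statement being proved fails outside the stable range. Take $n=2$ and $\mu=(1,1,1,1)$: then $\underline{\mu}=\Lambda^4\CC^4$ is the determinant representation of $\mathfrak{gl}_4$, whose restriction to $\mathfrak{sp}_4$ is the trivial one-dimensional module; but the formula gives $N_{(0),\mu}=1$ (from $\eta=(1^4)$, which is conjugate-even) and $N_{(1,1),\mu}=1$ (from $\eta=(1,1)$, since $N_{(1,1),(1,1),(1^4)}=1$), predicting a six-dimensional restriction. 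What actually happens is that the universal identity $s_{(1^4)}=\mathrm{sp}_{(0)}+\mathrm{sp}_{(1,1)}+\mathrm{sp}_{(1^4)}$ specializes at rank $2$ with King's modification rule sending $\mathrm{sp}_{(1^4)}$ to $-\chi_{(1,1)}$: the folded terms do not cancel among themselves, they subtract from the in-range coefficients. (The Weyl-integration variant you mention hits the same wall, since orthogonality against the Littlewood kernel also needs enough variables.) So your argument proves the theorem only under the hypothesis $\ell(\mu)\le n$ --- Littlewood's stable range --- which is a known caveat attached to the [FH] citation; fortunately this is the only range in which the paper invokes the result, namely $\mu=(1^k)$ with $k\le n$ and $\mu=(2,1^k)$ with $k\le n-1$, so the downstream results are unaffected, but a correct write-up must either add this hypothesis or replace the cancellation claim with the actual modification rules.
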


\subsection{The Lie algebras $W_{2n}$ and $H_{2n}$}
\begin{mydef}
The Lie algebra of polynomial vector fields on $\CC^m$ is $W_m:=\operatorname{Der}\CC[x_1,\ldots x_m]$.
\end{mydef}

\begin{mydef}\label{H2n def}  The Lie algebra of Hamiltonian vector fields on $\CC^{2n}$ is the Lie subalgebra:
$$H_{2n}=\{D\in W_{2n} \,\, | \,\, D \omega=0\},$$ 
of polynomial vector fields that preserve the symplectic form $\omega$.
\end{mydef}

Any $w\in W_m$ may be written $w=\sum_i f_i\frac{\partial}{\partial x_i}$, with $f_i\in \CC[x_1,\ldots,x_m]$.  Likewise, an arbitrary element of $H_{2n}$ can be written in the form:
$$D_u:= \sum_{i=1}^n \left(\frac{\partial u}{\partial 
x_i}\frac{\partial}{\partial x_{i+n}} - \frac{\partial u}{\partial 
x_{i+n}}\frac{\partial}{\partial x_i}\right)$$
The commutator relation is $[D_u,D_v]=D_{\{u,v\}}$, where $\{u,v\}$ is the Poisson bracket,
$$\{u,v\}=\sum_{i=1}^{n} \left(\frac{\partial u}{\partial x_i} \frac{\partial v}{\partial x_{i+n}}-\frac{\partial v}{\partial x_i} \frac{\partial u}{\partial x_{i+n}}\right).$$

We define a grading on $W_m$ by letting $W_m^k$ be the span of vector fields with $\deg f_i=k+1$ for each $i=1,\ldots, m$.  This grading is inherited by $H_{2n}$; it is easy to check that $H_{2n}^k$ is spanned by elements $D_u$ with $\deg u = k+2$.  We let $W_m^{\geq k}:=\oplus_{j\geq k} W_m^j$ and $H_{2n}^{\geq k}:=\oplus_{j\geq k} H_{2n}^j$ denote the trivially induced filtrations.  We may identify $W_m^0$ and $H_{2n}^0$ with $\mathfrak{gl}_{m}$ and $\mathfrak{sp}_{2n}$, respectively.

\subsection{Tensor field modules}\label{tens-field-sec}
We have projections $H_{2n}^{\geq 0}\to H_{2n}^{0}\cong \mathfrak{sp}_{2n}$, and $W_m^{\geq 0} \to W_m^0\cong \mathfrak{gl}_m$, through which we can pull back representations.  In this way we can define the functors of co-induction:
$$\operatorname{CoInd}_{\mathfrak{sp}_{2n}}^{H_{2n}}: \mathfrak{sp}_{2n}\textrm{-mod} \to H_{2n}\textrm{-mod},$$ 
$$ V \mapsto \operatorname{Hom}^{fin}_{H_{2n}^{\geq 0}}(U(H_{2n}), V),$$
$$\operatorname{CoInd}_{\mathfrak{gl}_{m}}^{W_{m}}: \mathfrak{gl}_{m}\textrm{-mod} \to W_{m}\textrm{-mod},$$ 
$$V \mapsto \operatorname{Hom}^{fin}_{W_{m}^{\geq 0}}(U(W_{m}), V),$$
where $\operatorname{Hom}^{fin}$ denotes the homomorphisms with finite-dimensional support, and $U$ is the universal enveloping algebra.

\begin{mydef}\label{f lambda def}
For $\lambda$ (resp. $\underline{\mu}$) an irreducible representations of $\mathfrak{sp}_{2n}$ (resp. $\mathfrak{gl}_{m}$), we define:
$$\mathcal{F}_\lambda:= \operatorname{CoInd}_{\mathfrak{sp}_{2n}}^{H_{2n}} \lambda,$$
$$\mathcal{G}_{\underline{\mu}}:= \operatorname{CoInd}_{\mathfrak{gl}_{2n}}^{W_{2n}} \underline{\mu}.$$
\end{mydef}

We have $H_{2n}^{-1}=\CC\partial_1 \oplus \cdots \oplus \CC\partial_{2n}$, and $W_{m}^{-1}=\CC\partial_1 \oplus \cdots \oplus \CC\partial_{m}$ .  We thus obtain isomorphisms,
$$\mathcal{F}_\lambda \cong 
(\CC[[x_1,\ldots x_{2n}]]\otimes \lambda)^{fin} \cong \CC[x_1,\ldots 
x_{2n}]\otimes \lambda$$
$$\mathcal{G}_{\underline{\mu}} \cong 
(\CC[[x_1,\ldots x_{m}]]\otimes \underline{\mu})^{fin} \cong \CC[x_1,\ldots 
x_{m}]\otimes \underline{\mu}$$

\begin{thm}[\cite{R75}, p. 478]\label{rud rep}
If $\lambda\ne (1^k)$, then $\mathcal{F}_\lambda$ is irreducible.
\end{thm}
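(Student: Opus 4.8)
The plan is to reconstruct Rudakov's argument, working throughout in the concrete realization $\mathcal{F}_\lambda \cong \CC[x_1,\ldots,x_{2n}]\otimes\lambda$ graded by polynomial degree, $\mathcal{F}_\lambda=\bigoplus_{d\ge 0}\mathcal{F}_\lambda^d$ with $\mathcal{F}_\lambda^d=S^dV\otimes\lambda$, where $V=\CC^{2n}$. This grading is compatible with $H_{2n}=\bigoplus_{k\ge -1}H_{2n}^k$, in which $H_{2n}^k\cong S^{k+2}V$ and $H_{2n}^0\cong\spn$ acts fiberwise on $\lambda$. I would then reduce irreducibility to two independent claims: (i) every nonzero $H_{2n}$-submodule $M$ meets the bottom $\mathcal{F}_\lambda^0\cong\lambda$; and (ii) $\mathcal{F}_\lambda^0$ generates all of $\mathcal{F}_\lambda$. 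Granting both, any nonzero $M$ contains $\mathcal{F}_\lambda^0$ and hence, by (ii), equals $\mathcal{F}_\lambda$.

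Claim (i) is the easy half and requires no hypothesis on $\lambda$. The part $H_{2n}^{-1}=\CC\partial_1\oplus\cdots\oplus\CC\partial_{2n}$ consists of constant-coefficient vector fields, whose Jacobian vanishes, so each $\partial_i$ acts on $\mathcal{F}_\lambda$ as $\partial_i\otimes\mathrm{id}_\lambda$, i.e. pure differentiation on the polynomial factor. A nonzero homogeneous element of positive degree cannot be annihilated by all the $\partial_i$, so the minimal polynomial degree occurring in a nonzero submodule $M$ must be $0$. Then $M\cap\mathcal{F}_\lambda^0$ is a nonzero $\spn$-submodule of the irreducible module $\lambda$, forcing $M\supseteq\mathcal{F}_\lambda^0$.

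Claim (ii) is the heart of the matter, and is where the exclusion $\lambda\ne(1^k)$ enters. Using that $H_{2n}^1$ generates the positive part $H_{2n}^{\ge 1}$, generation of $\mathcal{F}_\lambda$ from its bottom is equivalent to surjectivity, for every $d\ge 0$, of the $\spn$-equivariant action map
$$\mu_d\colon H_{2n}^1\otimes\mathcal{F}_\lambda^d\cong S^3V\otimes S^dV\otimes\lambda\longrightarrow S^{d+1}V\otimes\lambda\cong\mathcal{F}_\lambda^{d+1},$$
which is the sum of a transport (Poisson-bracket) term and a fiber (Jacobian) term; note the two terms genuinely mix, so surjectivity of $\mu_d$ is a real condition and not a formal consequence of surjectivity of either piece. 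Equivalently, all cokernels $\operatorname{coker}\mu_d$ must vanish. By the standard duality between coinduced and induced modules, $\operatorname{coker}\mu_d$ is identified with the space of degree-$(d{+}1)$ singular vectors — the $\spn$-highest-weight vectors annihilated by $H_{2n}^{\ge 1}$ — in the induced module $\operatorname{Ind}_{H_{2n}^{\ge 0}}^{H_{2n}}\lambda^{*}$, so it suffices to classify these.

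The classification of singular vectors is the step I expect to be the main obstacle. One reduces, via the branching rule of Theorem \ref{FH thm} together with the decompositions of $S^dV\otimes\lambda$ into $\spn$-irreducibles, to checking in each relevant low degree which highest weights can be killed by $H_{2n}^1$; this is a finite but intricate highest-weight computation, and it produces a nonzero singular vector exactly for the single-column diagrams $\lambda=(1^k)$. The structural explanation that guides the bookkeeping is that $\mathcal{F}_{(1^k)}$ is the module of (primitive) differential $k$-forms, on which the de Rham differential $d$ — an $H_{2n}$-equivariant first-order operator, since Hamiltonian vector fields commute with $d$ — is precisely such a singular (co)vector: its image is a proper submodule containing the constant forms $\mathcal{F}_{(1^k)}^0$, so the bottom fails to generate and $\mathcal{F}_{(1^k)}$ is reducible. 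For $\lambda\ne(1^k)$ no invariant first-order operator of this type exists, the cokernels vanish, the bottom generates, and $\mathcal{F}_\lambda$ is irreducible.
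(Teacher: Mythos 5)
Your overall framework is sound, and it is indeed the shape of Rudakov's argument: claim (i) is correct and complete (the $\partial_i$ act as pure differentiation on the polynomial factor, so any nonzero submodule reaches degree zero, and $\spn$-irreducibility of $\lambda$ then yields the whole bottom $\mathcal{F}_\lambda^0$), and the reduction of claim (ii) to vanishing of the cokernels of the maps $\mu_d$, i.e.\ to the absence of singular vectors in the dual induced module, is the right formal move --- it is essentially the same induction/coinduction duality that the paper itself invokes. But there is a genuine gap exactly where you flag it: you never prove the classification of singular vectors. The assertion that the ``finite but intricate highest-weight computation \dots\ produces a nonzero singular vector exactly for the single-column diagrams'' \emph{is} the theorem; everything surrounding it in your write-up is routine. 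The paper does not attempt this computation either --- its proof is a citation: Rudakov proves that the induced modules $U(H_{2n})\otimes_{H_{2n}^{\geq 0}}V_\lambda$ are irreducible for $\lambda\neq(1^k)$, and the duality pairing between induction and coinduction transfers irreducibility to the coinduced module $\mathcal{F}_\lambda$. As written, your proposal neither carries out Rudakov's computation nor cites it, so the statement is not proved.

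There is also a secondary inaccuracy in the final step of your sketch. Your ``structural explanation'' addresses only \emph{first-order} invariant operators, i.e.\ degree-one singular vectors (the de Rham differential in the case $\lambda=(1^k)$). But you need surjectivity of $\mu_d$ for \emph{every} $d$, which requires excluding singular vectors in all degrees $d+1$, equivalently invariant differential operators of arbitrary order out of $\mathcal{F}_\lambda$. A substantial part of Rudakov's argument consists precisely of bounding the degrees in which singular vectors can occur; the nonexistence of a first-order invariant operator for $\lambda\neq(1^k)$ does not by itself imply this bound. So even granting your first-order analysis, claim (ii) --- and with it the theorem --- would remain open.
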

\begin{proof} In \cite{R75}, it is proven that the induced modules, 
$$\operatorname{Ind}_{\mathfrak{sp}_{2n}}^{H_{2n}}V_\lambda:= U(H_{2n})\otimes_{H_{2n}^{\geq 0}} V_\lambda,$$
are irreducible for $\lambda \neq (1^k)$.  We have the duality pairing between induction and co-induction:
$$\operatorname{Hom}_{H_{2n}^{\geq 0}}^{fin}(U(H_{2n}),V^*) \otimes (U(H_{2n})\otimes_{H_{2n}^{\geq 0}} V) \to \CC$$
$$f \otimes (a\otimes b) \mapsto \langle f(a),b \rangle.$$
This implies that $\mathcal{F}_\lambda \cong \operatorname{CoInd}_{\mathfrak{sp}_{2n}}^{H_{2n}}V_\lambda^*$ is irreducible.
\end{proof}
The modules $\mathcal{F}_{(1^k)}$ are not irreducible; to describe their structure, we begin by realizing them as submodules in $\Omega^k(\CC^{2n})$, as follows.  Recall from symplectic Hodge theory (see \cite{G} for details) that $\Omega^\bullet(\CC^{2n})$ carries an action of the Lie algebra $\mathfrak{sl}_2$, where $E$ acts by contraction, $\iota_\pi$, with the generating Poisson bi-vector, $F$ acts by wedging with $\omega$, and $H$ acts diagonally: $H\eta = (n-k)\eta$, for $\eta\in \Omega^k(\CC^{2n})$.

The $\mathfrak{sl}_2$-action clearly commutes with the $H_{2n}$-action, and $\mathcal{F}_{(1^k)}$ is the space of $k$-forms lying in the kernel of $\iota_\pi$, i.e. the subspace of $\mathfrak{sl}_2$-singular vectors of weight $n-k$.  In addition to the differential $d$, we have the operator $\delta$ defined by the equation, $\delta =\ast d\ast$, where $\ast$ is the symplectic Hodge star operator.

We define submodules $X_k,Y_k,Z_k,T_k$ of $\mathcal{F}_{(1^k)}$ as follows.  The operators $d$ and $\delta$ generate a copy of $V_{(1)}$ under the $\mathfrak{sl}_2$-action, where $V_{(i)}$ is the $i+1$ dimensional irreducible representation of $\mathfrak{sl}_2$. Thus, upon restricting the map $d:\Omega^k\to \Omega^{k+1}$ to the $V_{(n-k)}$-isotypic component, we find that its image lies in a submodule isomorphic to $(V_{(1)}\otimes V_{(n-k)})$.  Recalling the isomorphism $V_{(1)}\otimes V_{(n-k)}\cong V_{(n-k-1)}\oplus V_{(n-k+1)}$ (for $k < n$), we define:
\begin{align*}
X_k &= \{ x\in \mathcal{F}_{(1^k)} \,\,|\,\, dx=0 \},\\
Y_k &= \{ x\in \mathcal{F}_{(1^k)} \,\,|\,\, dx \textrm{ generates $V_{(n-k-1)}$ or 0 under the $\mathfrak{sl}_2$-action}\},\\
Z_k &= \{ x\in \mathcal{F}_{(1^k)} \,\,|\,\, dx \textrm{ generates $V_{(n-k+1)}$ or 0 under the $\mathfrak{sl}_2$-action}\},\\
T_k &= Y_k + Z_k.
\end{align*}
\begin{rem}Note that $T_k$ is a proper submodule of $\mathcal{F}_k$ because $$T_k \cap \ker d \neq Y_k \cap \ker d + Z_k \cap \ker d.$$ 
As the operators $d,\delta$ commute with $H_{2n}$, the subspaces $X_k,Y_k,Z_k,T_k$ are $H_{2n}$-submodules.\end{rem}
\begin{thm}\label{jhforfk}  For $1\leq k \leq n-1$, the complete lattice of $H_{2n}$-submodules for $\mathcal{F}_{(1^k)}$ is as follows:
$$\xymatrix{
&& Y_k \ar@{^{(}->}[dr] & \\
0\ar@{^{(}->}[r]& X_k \ar@{^{(}->}[dr] \ar@{^{(}->}[ur] & & T_k \ar@{^{(}->}[r]& \mathcal{F}_{(1^k)} \\
&&  Z_k \ar@{^{(}->}[ur]. & \\
}$$

In particular, the Jordan-H\"older series of $\mathcal{F}_{(1^k)}$ is $X_k$, $Y_k/X_k$, $Z_k/X_k$, $\mathcal{F}_{(1^k)}/T_k$.
\end{thm}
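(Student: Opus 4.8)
The plan is to work entirely inside the de Rham complex $\Omega^\bullet(\CC^{2n})$, exploiting the commuting actions of $H_{2n}$ and of the symplectic $\mathfrak{sl}_2=\langle E,F,H\rangle$, and to package the differential $d$ into its two $\mathfrak{sl}_2$-graded components on primitive forms. Writing $P^k=\mathcal F_{(1^k)}$ for the primitive $k$-forms (the $\mathfrak{sl}_2$-singular vectors of weight $n-k$), the Lefschetz decomposition lets me write $d\beta=\partial_+\beta+(\omega\wedge)\,\partial_-\beta$ for $\beta\in P^k$, where $\partial_+\colon P^k\to P^{k+1}$ and $\partial_-\colon P^k\to P^{k-1}$ are the components landing in the $V_{(n-k-1)}$- and $V_{(n-k+1)}$-isotypic parts, respectively. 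Since $d$ and the $\mathfrak{sl}_2$-action commute with $H_{2n}$, both $\partial_\pm$ are $H_{2n}$-equivariant, and $d^2=0$ forces $\partial_+^2=\partial_-^2=\partial_+\partial_-+\partial_-\partial_+=0$. In this language the definitions read $Y_k=\ker\partial_-$, $Z_k=\ker\partial_+$, and $X_k=\ker\partial_+\cap\ker\partial_-=\ker(d|_{P^k})$, since $\partial_+\beta$ and $(\omega\wedge)\partial_-\beta$ lie in different Lefschetz components. This immediately gives $Y_k\cap Z_k=X_k$ and hence the central isomorphism $T_k/X_k=(Y_k+Z_k)/(Y_k\cap Z_k)\cong Y_k/X_k\oplus Z_k/X_k$, which already produces the diamond shape of the proposed lattice; the constraint $k\le n-1$ is exactly what guarantees that both summands $V_{(n-k\mp1)}$ are present.

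The second step is to prove that the four subquotients $X_k$, $Y_k/X_k$, $Z_k/X_k$, $\mathcal F_{(1^k)}/T_k$ are simple. Here I would use equivariance of $\partial_\pm$ to present $Y_k/X_k\cong\partial_+(Y_k)$ and $Z_k/X_k\cong\partial_-(Z_k)$ as honest $H_{2n}$-submodules of the neighbouring column modules $\mathcal F_{(1^{k+1})}$ and $\mathcal F_{(1^{k-1})}$, the point being that the kernels of $\partial_\pm$ restricted to $Y_k,Z_k$ are exactly $X_k$. To compute these images and to handle $X_k$ and the top factor, I would invoke the symplectic Poincaré lemma: because $\CC^{2n}$ is acyclic, the de Rham complex is exact in positive degrees, and this acyclicity transfers to exactness statements for the complexes $(P^\bullet,\partial_+)$ and $(P^\bullet,\partial_-)$. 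Feeding this into the equivariant identifications, together with Rudakov's simplicity of the non-column modules $\mathcal F_\mu$, $\mu\neq(1^j)$ (Theorem \ref{rud rep}), and running a simultaneous induction on $k$, should pin each subquotient down as a simple $H_{2n}$-module. The factor $X_k$ of closed primitive forms, and the cokernel-type factor $\mathcal F_{(1^k)}/T_k$, are the two that do not come directly from a neighbouring module and so require the Poincaré-lemma bookkeeping most delicately.

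For completeness of the lattice it remains to show there are no further submodules. The socle is controlled by the grading: every submodule is graded by polynomial degree, the degree-zero part of $\mathcal F_{(1^k)}$ consists of the constant (hence closed) primitive forms and is $\spn$-irreducibly isomorphic to $(1^k)$, and applying the operators $\partial_i\in H_{2n}^{-1}$ drives any nonzero element down into degree zero; thus every nonzero submodule contains the constant forms, so $X_k$ (shown simple) is the unique minimal submodule. Dually — reflecting the coinduction/induction duality used in the proof of Theorem \ref{rud rep} — $\mathcal F_{(1^k)}$ has a unique maximal submodule, namely $T_k$, with simple head $\mathcal F_{(1^k)}/T_k$. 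Finally I would check that $Y_k/X_k\not\cong Z_k/X_k$ as $H_{2n}$-modules, distinguished by their lowest $\spn$-types, equivalently by lying in the image of $\partial_+$ versus $\partial_-$; this makes the interval $[X_k,T_k]$, which equals the submodule lattice of $Y_k/X_k\oplus Z_k/X_k$, exactly the four-element diamond $X_k\subset Y_k,Z_k\subset T_k$. Combining the simple socle, the simple head with unique maximal $T_k$, and the diamond middle forces the stated Hasse diagram, and reading it off from bottom to top yields the Jordan--H\"older series $X_k,\,Y_k/X_k,\,Z_k/X_k,\,\mathcal F_{(1^k)}/T_k$.

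The main obstacle is the combination of simplicity and non-splitness that underlies completeness. Proving the four factors simple — especially $X_k$ and $\mathcal F_{(1^k)}/T_k$ — and pairwise non-isomorphic is where the real work lies, and it is precisely here that the two heavy inputs enter: the symplectic Poincaré lemma, to control the images and cohomologies of $\partial_\pm$, and Rudakov's representation theory, to identify and separate the simple constituents. Once these are in hand, the diamond shape of the middle and the uniqueness of the socle and head are comparatively formal; but establishing that the gluing is genuinely non-split, so that no unexpected diagonal or intermediate submodule appears, is the crux, and I expect the cleanest route is the duality between the coinduced module $\mathcal F_{(1^k)}$ and the induced module analysed by Rudakov.
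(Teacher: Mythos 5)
Your reduction of the lattice shape to the Lefschetz decomposition $d\beta=\partial_+\beta+\omega\wedge\partial_-\beta$ is sound, and the socle argument (driving any nonzero vector down to polynomial degree zero with the operators $\partial_i\in H_{2n}^{-1}$, then using $\mathfrak{sp}_{2n}$-irreducibility of the degree-zero part) is a correct way to see that every nonzero submodule contains the constant primitive forms. (One small repair: your claim that every submodule is graded by polynomial degree is not immediate, since the Euler field $\sum_i x_i\partial_i$ does not preserve $\omega$ and so does not lie in $H_{2n}$; but the argument survives by applying a suitable $\partial^\alpha$ to the top graded component of a given nonzero vector.) The genuine gap sits exactly where you say ``the real work lies'': simplicity of the four subquotients is never proved, and the two inputs you plan to use cannot prove it. Theorem \ref{rud rep} asserts irreducibility of $\mathcal{F}_\lambda$ only for $\lambda\neq(1^j)$, and none of the factors $X_k$, $Y_k/X_k$, $Z_k/X_k$, $\mathcal{F}_{(1^k)}/T_k$ is of that form --- these are precisely the ``new'' simple modules that occur only inside the column modules. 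Your Poincar\'e-lemma bookkeeping can at best identify the middle factors with closed-form modules of neighbouring columns (e.g.\ $\partial_+$ identifies $Y_k/X_k$ with $X_{k+1}$), but that only shuffles the problem between columns: the proposed ``simultaneous induction on $k$'' has no engine, because at every $k$ the simplicity of $X_k$ and of $\mathcal{F}_{(1^k)}/T_k$ is an assumption, not a consequence of acyclicity of $\CC^{2n}$ or of Theorem \ref{rud rep}. Simplicity of these factors is a statement about singular vectors in degenerate induced/coinduced modules, and exactness of the de Rham complex does not produce it.

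The missing ingredient is precisely what the paper's two-line proof cites: Rudakov's theorem in \cite{R75} that the induced modules $\mathcal{F}^*_{(1^k)}$ have length exactly four, hence so do the coinduced modules $\mathcal{F}_{(1^k)}$. Granting that, the paper's argument is essentially formal: the chains $0\subset X_k\subset Y_k\subset T_k\subset\mathcal{F}_{(1^k)}$ and $0\subset X_k\subset Z_k\subset T_k\subset\mathcal{F}_{(1^k)}$ consist of proper containments (straightforward to check), hence are composition series, hence every displayed subquotient is simple; observations like yours about the diamond, socle and head then complete the lattice description. So your outline is not so much wrong as circular at the crucial point: to close it you must either import Rudakov's length-four result for the column modules --- at which point your proof collapses to the paper's --- or reprove it by classifying singular vectors in the degenerate modules, a substantial task that the proposal leaves entirely to ``duality'' and ``bookkeeping''.
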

\begin{proof}  It is straightforward to check that each containment in the theorem is proper.  In \cite{R75}, it is proven that the induced modules, $\mathcal{F}^*_{(1^k)}$, have length four; thus so do the $\mathcal{F}_{(1^k)}$, and the theorem follows.
\end{proof} 
\begin{thm}\label{extracases} For $k=0,n$, $\mathcal{F}_{(1^k)}$ has the closed forms $X_k$ as a sub-module; the quotient is irreducible. \end{thm}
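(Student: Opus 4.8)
The plan is to reduce the statement to two independent claims: that $X_k=\ker\!\big(d\colon \mathcal{F}_{(1^k)}\to\Omega^{k+1}\big)$ is a proper, nonzero $H_{2n}$-submodule, and that the quotient $\mathcal{F}_{(1^k)}/X_k$ is irreducible. The first claim is uniform in $k$ and needs no new idea: every $D\in H_{2n}$ acts by the Lie derivative, which commutes with the de Rham differential, so $\ker d=X_k$ is $H_{2n}$-stable. That $X_k$ is nonzero is witnessed by the constant-coefficient primitive forms, and that it is proper follows by exhibiting a single primitive $k$-form $x$ with $dx\neq 0$. The entire content therefore lies in the irreducibility of the quotient.

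My approach to irreducibility is to run the argument of Theorem \ref{jhforfk} and to observe that at the two boundary values $k=0,n$ the $\mathfrak{sl}_2$–branching that produced the four-term filtration degenerates to a two-term one. Recall that $Y_k,Z_k$ record whether $dx$ generates the summand $V_{(n-k-1)}$ or the summand $V_{(n-k+1)}$ of $V_{(1)}\otimes V_{(n-k)}$. For $k=n$ the weight is $n-k=0$ and $V_{(1)}\otimes V_{(0)}\cong V_{(1)}$ has a single summand, so the $V_{(n-k-1)}=V_{(-1)}$ branch is absent; hence $Y_n=X_n$ while $Z_n=T_n=\mathcal{F}_{(1^n)}$. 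For $k=0$ the codifferential vanishes identically on functions (as $\delta f=\ast d\ast f$ and $\ast f$ is a top form, so $d\ast f=0$), whence $\iota_\pi(df)=\pm\delta f=0$; thus $df$ is always primitive, generates $V_{(n-1)}=V_{(n-k-1)}$, and the other branch is absent, giving $Z_0=X_0$ and $Y_0=\mathcal{F}_0$. In both cases the four candidate factors $X_k,\;Y_k/X_k,\;Z_k/X_k,\;\mathcal{F}_{(1^k)}/T_k$ collapse to exactly two, namely $X_k$ and $\mathcal{F}_{(1^k)}/X_k$.

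It remains to see that these two factors are genuinely irreducible. For $k=0$ this is transparent: $\mathcal{F}_0\cong\CC[x_1,\dots,x_{2n}]$ with $D_u$ acting by $f\mapsto\{u,f\}$, the submodule $X_0$ is precisely the constants, and the map $\bar f\mapsto D_f$ identifies $\mathcal{F}_0/X_0$ with $H_{2n}$ under the adjoint action; since $H_{2n}$ is a simple Lie algebra, this quotient is irreducible. For $k=n$ I would appeal to \cite{R75} exactly as in Theorem \ref{jhforfk}: Rudakov's analysis shows that $\mathcal{F}^{\ast}_{(1^k)}$ has length two for $k\in\{0,n\}$, and dualizing gives that $\mathcal{F}_{(1^k)}$ has length two; as $X_k$ is a proper nonzero submodule, both $X_k$ and $\mathcal{F}_{(1^k)}/X_k$ are then forced to be irreducible.

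The main obstacle is precisely the case $k=n$. Unlike $k=0$, there is no adjoint description of the quotient, so its irreducibility cannot be read off by elementary means and must be extracted from Rudakov's classification at the degenerate weight $n-k=0$. An appealing alternative, which one could pursue to keep the argument self-contained, is to study $\mathcal{F}_{(1^n)}/X_n$ through the symplectic codifferential: since a primitive $(n+1)$-form vanishes, one has $\ker\delta=\ker d=X_n$ in middle degree, so $\delta$ descends to an injection $\mathcal{F}_{(1^n)}/X_n\hookrightarrow\mathcal{F}_{(1^{n-1})}$, and the problem becomes that of locating its image within the lattice of Theorem \ref{jhforfk}. Carrying this out demands careful bookkeeping of the operators $d,\delta,\iota_\pi,\omega\wedge$ and their commutators, and it is here that the only genuine difficulty resides.
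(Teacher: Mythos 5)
Your proposal is correct and follows essentially the same route as the paper: the case $k=n$ is settled by citing Rudakov's length-two result for $\mathcal{F}_{(1^n)}$ together with the observation that the closed forms are a proper nonzero submodule, exactly as the paper does, and the case $k=0$ reduces to the irreducibility of functions modulo constants, which the paper simply quotes as well-known. Your identification of $\mathcal{F}_{0}/X_0$ with the adjoint representation of the simple Lie algebra $H_{2n}$ via $\bar f \mapsto D_f$ is a nice way of actually proving that well-known fact, and the remaining material (the collapse of the four-term lattice at $k=0,n$ and the sketched codifferential alternative for $k=n$) is correct but supplementary to the argument.
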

\begin{proof}
For $k=0$, it well-known that functions modulo constants form an irreducible $H_{2n}$-module.  For $k=n$, it is shown in \cite{R75} that $\mathcal{F}_{(1^n)}$ has length two, as an $H_{2n}$-module; the closed forms clearly form a proper submodule.
\end{proof}


It follows from Definition \ref{f lambda def} that restriction and coinduction commute:
$$\operatorname{Res}_{H_{2n}}^{W_{2n}}\operatorname{CoInd}_{\mathfrak{gl}_{2n}}^{W_{2n}}V \cong \operatorname{CoInd}_{\mathfrak{sp}_{2n}}^{H_{2n}}\operatorname{Res}_{\mathfrak{sp}_{2n}}^{\mathfrak{gl}_{2n}}V.$$

In particular, we have from Theorem \ref{FH thm} the restriction formula:

\begin{equation}\label{rest-form}\mbox{Res}^{W_{2n}}_{H_{2n}}(\mathcal{G}_{\underline{\mu}}) \cong \bigoplus_\lambda N_{\lambda\mu} \mathcal{F}_{\lambda}.\end{equation}

\subsection{Distinguished cyclic vectors}

Recall that the tensor field modules $\mathcal{F}_{\lambda}$ are irreducible when $\lambda\neq(1^k)$, and that we have an isomorphism of $\mathfrak{sp}_{2n}$-modules,
$$\mathcal{F}_\lambda \cong 
(\CC[[x_1,\ldots x_{2n}]]\otimes \lambda)^{fin} \cong \CC[x_1,\ldots 
x_{2n}]\otimes \lambda.$$

In particular, we observe that $\mathcal{F}_\lambda$ contains a unique-up-to-scalars $\mathfrak{sp}_{2n}$-highest weight vector $v_\lambda$ of weight $\lambda$.  The following proposition follows immediately:

\begin{prop}
Let $\lambda\neq (1^k) \in Y(n)$.  A tensor field module $M$ for $H_{2n}$ is isomorphic to $\mathcal{F}_\lambda$ if, and only if, $M$ contains a highest weight vector $v_\lambda$ for $\mathfrak{sp}_{2n}$, such that $\partial_i v_\lambda=0$ for all $i=1,\ldots, 2n$.
\end{prop}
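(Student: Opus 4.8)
The plan is to read off both implications from the graded realization $\mathcal{F}_\mu\cong\CC[x_1,\ldots,x_{2n}]\otimes\mu$ recorded above, in which the polynomial degree furnishes the grading, with $H_{2n}^{j}$ shifting degree by $j$, and in which each $\partial_i\in H_{2n}^{-1}$ acts as $\frac{\partial}{\partial x_i}\otimes\mathrm{id}_\mu$. The one structural fact I would isolate first is that in any such tensor field module the joint kernel $\bigcap_{i=1}^{2n}\ker(\partial_i)$ is exactly the degree-zero component $\CC\otimes\mu$: a polynomial-valued vector annihilated by every $\frac{\partial}{\partial x_i}$ is constant in $x$. As an $\mathfrak{sp}_{2n}=H_{2n}^0$-module this degree-zero piece is the irreducible representation $\mu$, and so it contains a unique-up-to-scalar highest weight vector, necessarily of weight $\mu$.

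For the forward implication, suppose $M\cong\mathcal{F}_\lambda$. I would simply take $v_\lambda$ to be the highest weight vector of the degree-zero component $1\otimes\lambda$; it has weight $\lambda$, and because $\partial_i$ lowers degree (equivalently, differentiates a constant) we get $\partial_i v_\lambda=0$ for every $i$. This produces the required vector, and the uniqueness noted just above shows it is the only candidate.

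For the converse, suppose the tensor field module $M\cong\mathcal{F}_\mu$ contains an $\mathfrak{sp}_{2n}$-highest weight vector $v_\lambda$ of weight $\lambda$ with $\partial_i v_\lambda=0$ for all $i$. By the isolated fact, $v_\lambda$ must lie in the degree-zero part $\CC\otimes\mu\cong\mu$; but the only highest weight occurring there is $\mu$, so comparing weights forces $\lambda=\mu$, and hence $M\cong\mathcal{F}_\lambda$. Since $\lambda\neq(1^k)$, Theorem \ref{rud rep} guarantees that $\mathcal{F}_\lambda$ is irreducible, so this identifies $M$ as the full irreducible tensor field module rather than merely locating $v_\lambda$ inside a proper subquotient (which is exactly what could go wrong for $\lambda=(1^k)$, where the degree-zero vector already lies in $X_k$).

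The only nonformal step is the assertion that the negative-degree generators $\partial_i$ act by honest differentiation on the polynomial factor, with no correction term mixing in the $\mu$-factor; this is precisely what is encoded in the identification $\mathcal{F}_\mu\cong(\CC[[x_1,\ldots,x_{2n}]]\otimes\mu)^{fin}$ set up when $H_{2n}^{-1}=\CC\partial_1\oplus\cdots\oplus\CC\partial_{2n}$ was introduced, since the constant vector fields have vanishing linear part. Once this is invoked the joint-kernel computation, and with it the whole proposition, is immediate; I expect this identification, rather than any weight bookkeeping, to be the only point requiring care.
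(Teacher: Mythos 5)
Your proof is correct and takes essentially the same route as the paper: the paper's own (one-line) argument derives the proposition ``immediately'' from the realization $\mathcal{F}_\lambda\cong\mathbb{C}[x_1,\ldots,x_{2n}]\otimes\lambda$ together with the uniqueness, up to scalar, of the $\mathfrak{sp}_{2n}$-highest weight vector of weight $\lambda$, which sits in the degree-zero component and is killed by all $\partial_i$. Your explicit identification of the joint kernel $\bigcap_{i}\ker(\partial_i)$ with the degree-zero piece $1\otimes\mu$, followed by the weight comparison, is exactly the implicit content of that argument, just spelled out in detail.
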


We call any such highest weight vector $v_\lambda \in \mathcal{F}_\lambda$ a distinguished cyclic vector.  Note that, in \cite{R75}, the term ``singular vector" is used in the dual context.

The situation for $\mathcal{F}_{(1^k)}$ is more complicated.
\begin{prop}\label{partialprop}  We have:
\begin{enumerate}
\item The $H_{2n}$-submodule $X_k\subset \mathcal{F}_{(1^k)}\subset \Omega^k(\CC^{2n})$ is generated by: 
$$\x_k=dx_1\cdots dx_k.$$
\item The $H_{2n}$-submodule $Y_k\subset \mathcal{F}_{(1^k)}\subset \Omega^k(\CC^{2n})$ is generated by:
$$\y_k=\sum_{i=1}^{k+1}(-1)^{i}x_{i}dx_1\cdots\widehat{dx_i}\cdots dx_{k+1}.$$
\item The $H_{2n}$-submodule $Z_k\subset \mathcal{F}_{(1^k)}\subset \Omega^k(\CC^{2n})$ is generated by:
$$\z_k=\omega\wedge\y_{k-2} - 2(n-k+2)y\wedge\x_{k-1}.$$  
where $y=\frac 1 2 \sum_i (x_idx_{i+n}-x_{i+n}dx_i)$ is the Liouville form of $\omega$.
\end{enumerate}
\end{prop}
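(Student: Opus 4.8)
The plan is to verify, for each of the three generators, two things: first, that the proposed element actually lies in the claimed submodule $X_k$, $Y_k$, or $Z_k$; and second, that it generates that submodule as an $H_{2n}$-module. Since each of $X_k,Y_k,Z_k$ is an $H_{2n}$-submodule of $\mathcal{F}_{(1^k)}$, and since the lattice of submodules is completely understood by Theorem \ref{jhforfk}, membership plus a nonvanishing/indecomposability check will suffice to pin down which submodule is generated.

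For membership, I would use the characterizations given just before the statement in terms of the operators $d$ and $\delta$ and the $\mathfrak{sl}_2$-action. For part (1), $\x_k = dx_1\cdots dx_k$ is manifestly closed, so $d\x_k = 0$ and hence $\x_k \in X_k$; one also checks it is $\iota_\pi$-closed (weight $n-k$) so that it genuinely lies in $\mathcal{F}_{(1^k)}$ rather than a larger isotypic piece. For part (2), the element $\y_k$ is the standard ``radial'' primitive whose exterior derivative is $d\y_k = (k+1)\,dx_1\cdots dx_{k+1}$, a multiple of $\x_{k+1}$; I would check that $d\y_k$ generates $V_{(n-k-1)}$ under the $\mathfrak{sl}_2$-action (equivalently, that $\x_{k+1}$ is $\iota_\pi$-closed of the correct weight), placing $\y_k \in Y_k$. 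For part (3), I would compute $d\z_k$ and verify that, using $d\omega = 0$, $d\y_{k-2}=(k-1)\x_{k-1}$, and $dy = \omega$, the derivative generates the $V_{(n-k+1)}$-summand rather than the $V_{(n-k-1)}$-summand; the coefficient $-2(n-k+2)$ is precisely what is forced by cancelling the $V_{(n-k-1)}$-component, so this is where the explicit formula earns its constant.

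To show these elements generate the full submodules, I would exploit that each of $X_k$, $Y_k/X_k$, $Z_k/X_k$ is an irreducible $H_{2n}$-module (by Theorem \ref{jhforfk}): a nonzero element of $X_k$ generates all of $X_k$; a nonzero element of $Y_k$ not lying in $X_k$ generates $Y_k$ (its image generates the irreducible quotient $Y_k/X_k$, and one then checks the submodule it generates also contains $X_k$, e.g. because $X_k$ is the unique minimal submodule); similarly for $Z_k$. Thus the generation claims reduce to verifying that $\y_k \notin X_k$ and $\z_k \notin X_k$, i.e. that $d\y_k$ and $d\z_k$ are nonzero, together with checking that the cyclic module each generates contains $X_k$. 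The nonvanishing of $d\y_k = (k+1)\x_{k+1}$ is immediate; for $\z_k$ the nonvanishing of $d\z_k$ follows from the computation in part (3) once one confirms the surviving $V_{(n-k+1)}$-component is nonzero.

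The main obstacle is the computation in part (3): correctly identifying $d\z_k$ inside the decomposition $V_{(1)}\otimes V_{(n-k)} \cong V_{(n-k-1)}\oplus V_{(n-k+1)}$ and confirming that the prescribed coefficient $-2(n-k+2)$ kills exactly the $V_{(n-k-1)}$-part. This requires care in tracking how $d$, $\delta$, and the symplectic Hodge star interact with wedging by $\omega$ and by the Liouville form $y$; in particular one must use the identity $dy=\omega$ and the $\mathfrak{sl}_2$-commutation relations governing how $d$ shifts $\mathfrak{sl}_2$-weight. The referee's acknowledged corrections to the formulas in this proposition suggest that getting the constants exactly right is the genuinely delicate point, whereas parts (1) and (2) are routine.
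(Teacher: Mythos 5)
Your overall skeleton is reasonable, and two of its three pieces are fine: the membership checks in parts (1)--(2) are indeed routine, and your generation argument via the submodule lattice of Theorem \ref{jhforfk} (a nonzero element of $Y_k\setminus X_k$ generates $Y_k$, since the only submodules contained in $Y_k$ are $0$, $X_k$, $Y_k$; similarly for $Z_k$) is a valid alternative to the paper's appeal to uniqueness of the $\mathfrak{sp}_{2n}$-highest weight vector of prescribed weight and degree. The genuine gap is in part (3), where you have misidentified what the constant $-2(n-k+2)$ is for, and consequently omit the one computation the proof actually needs. Membership in $Z_k$ requires two things: (a) that $\z_k$ lies in $\mathcal{F}_{(1^k)}$ at all, i.e.\ $\iota_\pi(\z_k)=0$; and (b) that $d\z_k$ generates $V_{(n-k+1)}$ or $0$. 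You propose to verify only (b), asserting that the constant is forced by cancelling the $V_{(n-k-1)}$-component of $d\z_k$. But (b) is automatic for \emph{every} choice of constant: setting $\z_k^c=\omega\wedge\y_{k-2}+c\,y\wedge\x_{k-1}$ and using $d\omega=0$, $dy=\omega$, $d\x_{k-1}=0$, and $d\y_{k-2}=-(k-1)\x_{k-1}$, one finds
$$d\z_k^c=\bigl(c-(k-1)\bigr)\,\omega\wedge\x_{k-1},$$
a scalar multiple of $F\x_{k-1}$, which lies entirely in the $V_{(n-k+1)}$-isotypic component since $\x_{k-1}$ is $\mathfrak{sl}_2$-singular of weight $n-k+1$. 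There is no $V_{(n-k-1)}$-component to cancel, so your stated condition places no constraint on $c$ whatsoever, and your argument never distinguishes the correct element from any other $\z_k^c$.

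What the constant actually does is enforce (a), which your proposal never checks for $\z_k$. This is precisely the content of the contraction identities in the paper's proof:
$$\iota_\pi(\omega\wedge\y_{k-2})=(n-k+2)\y_{k-2},\qquad \iota_\pi(y\wedge\x_{k-1})=\tfrac12\,\y_{k-2},$$
so that $\iota_\pi(\z_k^c)=\bigl((n-k+2)+\tfrac{c}{2}\bigr)\y_{k-2}$ vanishes only when $c=-2(n-k+2)$. For any other $c$ the element $\z_k^c$ is not an $\mathfrak{sl}_2$-singular vector, hence does not lie in $\mathcal{F}_{(1^k)}$, and the question of membership in $Z_k\subset\mathcal{F}_{(1^k)}$ cannot even be posed. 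Once (a) is in place, the computation above gives $d\z_k=(k-2n-3)\,\omega\wedge\x_{k-1}\neq 0$ (equivalently, the paper's $\iota_\pi(d\z_k)=(n-k+1)(k-3-2n)\x_{k-1}\neq 0$), so $\z_k\in Z_k\setminus X_k$ and your lattice argument correctly concludes that $\z_k$ generates $Z_k$. In short: your proof as written establishes nothing about $\z_k$ beyond what holds for every $\z_k^c$, and the delicate point you flag at the end sits exactly in the check $\iota_\pi(\z_k)=0$ that you left out.
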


\begin{proof}
It is well-known that each of $X_k, Y_k, Z_k$ is generated as $H_{2n}$ by the unique $\mathfrak{sp}_{2n}$-highest weight vector of weight $(1^k), (1^{k+1}), (1^{k-1})$ and degree $k, k+1, k+1$, respectively.  It is straightforward to check that each of $\x_k, \y_k$, and $\z_k$ is highest weight of the correct weight and degree, and that the $\x_k$, $\y_k$ are in $X_k$, $Y_k$, respectively.  It remains only to show that $\z_k\in Z_k$, which follows from the identities:

$$i_\pi(\omega\y_{k-2}) = (n-k+2)\y_{k-2},\quad i_\pi(y \x_{k-1})=\frac{\y_{k-2}}{2},\quad$$
$$i_\pi(d\z_k)=(n-k+1)(k-3-2n)\x_{k-1}\neq 0.$$
\end{proof}
\begin{prop} We have the following identities:
\begin{align*}
\partial_l\y_k &= \left\{\begin{array}{ll}(-1)^{k} X^T_{l,k+1}\x_k, & 1\leq l \leq k,\\ (-1)^{k+1}\x_k, & l=k+1,\\ 0, & k+2\leq l \leq 2n. \end{array}\right.\\
\partial_l\z_k &= \left\{\begin{array}{ll}
(-1)^k\sum_{j\geq k}Y^T_{lj}X_{kj}^T\x_k,& 1\leq l \leq k-1,\\
(-1)^k(n-k+2)Y^T_{k,l}\x_k , & k\leq l \leq n,\\
0,& 1 \leq l-n\leq k-1,\\(-1)^{k-1}(n-k+2)X_{k,l-n}^T\x_k, &k \leq l-n\leq n.\end{array}\right.
\end{align*}
\end{prop}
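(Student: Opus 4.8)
The plan is to verify these partial-derivative formulas by direct computation, exploiting the fact that each $\partial_l$ is a degree-lowering operator and that the target space in each case is already pinned down by weight considerations. First I would observe that $\partial_l$, as the generator $\partial/\partial x_l \in H_{2n}^{-1}$, lowers polynomial degree by one, so $\partial_l \underline{y}_k$ and $\partial_l \underline{z}_k$ must land in the degree-$k$ part of $\mathcal{F}_{(1^k)}$; moreover since $\underline{x}_k$ is (up to scalar) the unique $\mathfrak{sp}_{2n}$-highest weight vector of weight $(1^k)$ and degree $k$ generating $X_k$, any element of the appropriate weight space is a determined multiple of a root-vector applied to $\underline{x}_k$. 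This reduces each identity to matching a scalar and identifying which root vector $X^T_{ij}$, $Y^T_{ij}$ appears, where the superscript $T$ denotes the transpose action on $\Omega^k$ dual to the standard $\mathfrak{sp}_{2n}$-action on $\mathbb{C}^{2n}$.

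The computation for $\underline{y}_k$ I expect to be the simplest, and I would do it first as a warm-up. Writing $\underline{y}_k=\sum_{i=1}^{k+1}(-1)^i x_i\, dx_1\cdots\widehat{dx_i}\cdots dx_{k+1}$ and applying $\partial_l=\partial/\partial x_l$, only the explicit $x_i$ coefficients are differentiated (the $dx_j$ are constants under $\partial_l$ in this realization). For $l=k+1$ a single term survives, yielding $(-1)^{k+1}dx_1\cdots dx_k=(-1)^{k+1}\underline{x}_k$; for $1\le l\le k$ one term survives but carries a wedge factor $dx_l$ out of position, and reorganizing it into $\underline{x}_k$ produces exactly the sign and the $X^T_{l,k+1}$ coefficient after re-indexing; for $l\ge k+2$ nothing survives. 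This confirms the first display.

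For $\underline{z}_k=\omega\wedge\underline{y}_{k-2}-2(n-k+2)\,y\wedge\underline{x}_{k-1}$ I would use the Leibniz rule for $\partial_l$ together with the already-established formula $\partial_l\underline{y}_{k-2}$ and the facts $\partial_l\omega=0$ (since $\omega$ has constant coefficients) and $\partial_l y = \tfrac12 dx_{l+n}$ or $-\tfrac12 dx_{l-n}$ depending on whether $l\le n$ or $l>n$ (reading off the Liouville form $y=\tfrac12\sum_i(x_i\,dx_{i+n}-x_{i+n}\,dx_i)$). Substituting and collecting the four coordinate ranges ($1\le l\le k-1$, $k\le l\le n$, $1\le l-n\le k-1$, $k\le l-n\le n$) should produce the four cases of the second display; the $\omega\wedge\partial_l\underline{y}_{k-2}$ contribution feeds the products $Y^T_{lj}X^T_{kj}$, while the $y$-term contributes the single $Y^T$ or $X^T$ factor. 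The main obstacle will be the bookkeeping in this last step: correctly tracking the antisymmetrization signs when a stray $dx_{l\pm n}$ is wedged into $\underline{x}_{k-1}$ or into $\omega\wedge\underline{y}_{k-2}$, and verifying that the resulting weight vectors are identified with the correct root-vector images $X^T_{ij}\underline{x}_k$, $Y^T_{ij}\underline{x}_k$ under the transpose convention. I would handle this by fixing the convention that $X^T_{ij}, Y^T_{ij}$ act on $\Omega^k$ by the rule dual to the root-vector action on the generators $dx_p$, and checking the scalars on a single monomial to eliminate sign ambiguity, after which the general formula follows by $\mathfrak{sp}_{2n}$-equivariance.
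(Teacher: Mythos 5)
Your proposal is correct and takes essentially the same approach as the paper: the paper likewise proves the identities by directly differentiating $\underline{y}_k$ and $\underline{z}_k$ as explicit forms (only the polynomial coefficients are differentiated, which for $\underline{z}_k$ amounts to your Leibniz computation with $\partial_l\omega=0$ and $\partial_l y=\pm\tfrac12 dx_{l\pm n}$) and then identifying the resulting forms with the stated root-vector expressions by direct comparison. One small caution: your opening claim that weight considerations reduce each case to a scalar multiple of a \emph{single} root vector applied to $\underline{x}_k$ is not accurate for the range $1\leq l\leq k-1$ of $\partial_l\underline{z}_k$, where the answer is the sum $(-1)^k\sum_{j\geq k}Y^T_{lj}X^T_{kj}\underline{x}_k$; but this framing is unnecessary, since your explicit computation already establishes the identities outright.
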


\begin{proof}
We compute:
{\footnotesize
\begin{align*}
\partial_l\y_k &= \left\{\begin{array}{ll}(-1)^l dx_1\cdots \widehat{dx_l} \cdots dx_{k+1}, & 1\leq l \leq k,\\ (-1)^{k+1}dx_1\cdots dx_k, & l=k+1,\\ 0, & k+2\leq l \leq 2n. \end{array}\right.\\
\partial_l\z_k &= \left\{\begin{array}{ll}
(-1)^l\omega dx_1\cdots \widehat{dx_l} \cdots dx_{k-1} - (n-k+2) dx_{n+l} dx_1\cdots  dx_{k-1},& 1\leq l \leq k-1,\\
-(n-k+2)dx_{l+n}dx_1\cdots dx_{k-1} , & k\leq l \leq n,\\
0,& 1 \leq l-n\leq k-1,\\dx_1\cdots dx_{k-1} dx_{l-n}, &k \leq l-n\leq n.\end{array}\right.
\end{align*}}
The claims now follow by direct comparison.
\end{proof}

%
\section{Action of $H_{2n}$ on $B_k(A'_{2n})$}\label{action}
In this section, we construct an action of $H_{2n}$ on each $B_k(A'_{2n})$, echoing that of $W_{2n}$ on each $B_k(A_{2n})$.

\begin{lma}\label{actionlemma}
Let $I$ be an ideal of an associative algebra $A$, and let $\operatorname{Im}(L_k(A)\cap I)$ denote the image of $L_k(A)\cap I$ in $B_k(A)$ then 
$$B_k(A/I)=B_k(A)/\operatorname{Im}(L_k(A)\cap I).$$
\end{lma}

\begin{proof}

Lemma 2.4 of \cite{BB} states the result in the case $k=2$, while the same proof applies to all $k$. 
\end{proof}

\begin{prop}
The $W_{2n}$ action on each $B_k(A_{2n})$ and $N_k(A_{2n})$ descends to an action of $H_{2n}$ on $B_k(A'_{2n})$ and $N_k(A'_{2n})$. 
\end{prop}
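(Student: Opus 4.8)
The plan is to deduce the statement from Lemma \ref{actionlemma}, which presents $B_k(A'_{2n})$ as a quotient of $B_k(A_{2n})$, and then to check that the restriction of the $W_{2n}$-action to $H_{2n}$ preserves the subspace by which we quotient. Taking $A=A_{2n}$ and $I=\langle\omega\rangle$, Lemma \ref{actionlemma} gives
$$B_k(A'_{2n})\cong B_k(A_{2n})/\operatorname{Im}(L_k(A_{2n})\cap\langle\omega\rangle),$$
and the same argument, with $M_k$ in place of $L_k$, yields $N_k(A'_{2n})\cong N_k(A_{2n})/\operatorname{Im}(M_k(A_{2n})\cap\langle\omega\rangle)$. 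Since $H_{2n}\subseteq W_{2n}$ acts on $B_k(A_{2n})$ and $N_k(A_{2n})$ by restriction, the result reduces to showing that these two images are $H_{2n}$-stable, which will follow once we show that $H_{2n}$ preserves each of $\langle\omega\rangle$, $L_k(A_{2n})$, and $M_k(A_{2n})$.

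Two of these are automatic: any derivation $D$ of $A_{2n}$ preserves the Lie ideals $L_k$ and the associative ideals $M_k$, by the Leibniz rule and induction on $k$. So the crux is to show that $H_{2n}$ preserves $\langle\omega\rangle$. I would do this by realizing each $D_u\in H_{2n}$ as an honest derivation of $A_{2n}$, lifting the Hamiltonian vector field by $x_{i+n}\mapsto\widehat{\partial u/\partial x_i}$ and $x_i\mapsto-\widehat{\partial u/\partial x_{i+n}}$, where $\widehat{(\,\cdot\,)}$ denotes the symmetrization (Weyl ordering) of a commutative monomial into $A_{2n}$. For such a lift one computes
$$D_u\,\omega=\tfrac12\sum_{j=1}^{2n}[x_j,\widehat{\partial u/\partial x_j}],$$
so that $D_u\omega=0$ reduces to the divergence-free identity $\sum_j[x_j,\widehat{\partial u/\partial x_j}]=0$, which holds because both $\sum_j x_j\,\widehat{\partial u/\partial x_j}$ and $\sum_j\widehat{\partial u/\partial x_j}\,x_j$ equal $(\deg u)\,\widehat{u}$, by symmetry of $\widehat{u}$. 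Granting $D_u\omega=0$, the Leibniz rule gives $D_u\langle\omega\rangle\subseteq\langle\omega\rangle$, and hence $D_u$ preserves $L_k\cap\langle\omega\rangle$ and $M_k\cap\langle\omega\rangle$ together with their images in $B_k$ and $N_k$.

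The one point requiring care, and the step I expect to be the main obstacle, is the compatibility between this derivation-level computation and the $W_{2n}$-action as it is actually constructed on the quotients $B_k(A_{2n})$ and $N_k(A_{2n})$. The symmetric lift above is not canonical, and a priori such lifts need not close under the bracket inside $\operatorname{Der}(A_{2n})$; what rescues us is that the endomorphism induced on $B_k$ (resp. $N_k$) is independent of the chosen derivation lift, so that the $H_{2n}$-action may be computed using any lift, in particular the symmetric one. I would therefore verify that the action of $D_u\in H_{2n}$ on $B_k(A_{2n})$ coincides with the map induced by the symmetric derivation lift, and then conclude: since that lift stabilizes $L_k\cap\langle\omega\rangle$ and $M_k\cap\langle\omega\rangle$, the induced $H_{2n}$-action stabilizes $\operatorname{Im}(L_k\cap\langle\omega\rangle)$ and $\operatorname{Im}(M_k\cap\langle\omega\rangle)$, and by Lemma \ref{actionlemma} it descends to an action of $H_{2n}$ on $B_k(A'_{2n})$ and $N_k(A'_{2n})$.
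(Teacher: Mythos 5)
Your proposal is correct and follows the same route as the paper: reduce via Lemma \ref{actionlemma} to the claim that $H_{2n}$ preserves $L_k(A_{2n})$ (resp.\ $M_k(A_{2n})$) and $\langle\omega\rangle$, hence their intersection and its image in $B_k(A_{2n})$ (resp.\ $N_k(A_{2n})$), so that the action descends. The only difference is one of detail: the paper asserts the invariance of $\langle\omega\rangle$ in a single line, whereas you actually prove it — checking that the Weyl-symmetrized lift of $D_u$ kills the noncommutative $\omega$ via the identity $\sum_j x_j\,\widehat{\partial u/\partial x_j} = \sum_j \widehat{\partial u/\partial x_j}\,x_j = (\deg u)\,\widehat{u}$, and invoking lift-independence of the induced map on $B_k$ (which is precisely how the $W_{2n}$-action of \cite{FS} is constructed) — and both of these steps are correct.
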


\begin{proof}
From Lemma \ref{actionlemma}, $B_k(A'_{2n})=B_k(A_{2n})/(L_k(A_{2n})\cap \langle\omega\rangle)$. Since both $L_k(A_{2n})$ and $\langle \omega \rangle$ are invariant under action by $H_{2n}$, so is their intersection; thus, the action descends. 
\end{proof}

\begin{cor} \label{B2 C iso} We have the following isomorphisms:
\begin{enumerate}
\item $B_2(A'_{2n})\cong \Omega^{even,+}_{closed}(\mathbb{C}^{2n})/ \langle \omega \rangle$.
\item $A'_{2n}/M_3(A'_{2n}) \cong \Omega^{even}_{*}(\mathbb{C}^{2n})/\langle \omega \rangle$.
\item $\bar{B}_1(A'_{2n})\cong (\Omega^{even}(\mathbb{C}^{2n})/\Omega^{even}_{closed}(\mathbb{C}^{2n}))/\langle \omega \rangle$.
\end{enumerate}
\end{cor}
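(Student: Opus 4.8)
The plan is to derive all three isomorphisms from the Feigin--Shoikhet isomorphism $\Phi\colon A_{2n}/M_3(A_{2n})\xrightarrow{\sim}\Omega^{even}_*(\CC^{2n})$ of \cite{FS} together with Lemma \ref{actionlemma}, by transporting the corresponding statements for the free algebra across $\Phi$ and then quotienting by $\langle\omega\rangle$. I will use three facts about $A_{2n}$, all from \cite{FS}: that $\Phi$ carries the image of $\omega$ to the standard symplectic form (noted in the excerpt); that $\Phi$ identifies the image of $L_2(A_{2n})$ in $A_{2n}/M_3$ with $\Omega^{even,+}_{closed}$ (which, by the Poincar\'e lemma on $\CC^{2n}$, equals the exact even forms, consistent with the computation $\Phi([a,b])=2\,da\wedge db$ on functions); and that $L_3(A_{2n})=L_2(A_{2n})\cap M_3(A_{2n})$, so that $\Phi$ restricts to $B_2(A_{2n})\cong\Omega^{even,+}_{closed}$.

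I would prove (2) and (3) first, as these require only the third isomorphism theorem. Since $\pi\colon A_{2n}\to A'_{2n}$ is surjective we have $M_3(A'_{2n})=\pi(M_3(A_{2n}))$ and $L_2(A'_{2n})=\pi(L_2(A_{2n}))$, so $A'_{2n}/M_3(A'_{2n})\cong A_{2n}/(M_3+\langle\omega\rangle)$ and $\bar B_1(A'_{2n})\cong A_{2n}/(M_3+L_2+\langle\omega\rangle)$. Applying $\Phi$, and using that the image of a two-sided ideal under a surjective algebra map is generated by the images of its generators, the image of $\langle\omega\rangle$ is precisely the Fedosov ideal generated by the symplectic form; thus (2) becomes $\Omega^{even}_*/\langle\omega\rangle$, and (3) becomes $\Omega^{even}/(\Omega^{even,+}_{closed}+\langle\omega\rangle)$, which is the expression recorded in the introduction and which I would rewrite as $(\Omega^{even}/\Omega^{even}_{closed})/\langle\omega\rangle$ to match the stated form.

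Statement (1) is where the genuine content lies, and where I expect the main obstacle. By Lemma \ref{actionlemma} with $I=\langle\omega\rangle$ we have $B_2(A'_{2n})=B_2(A_{2n})/\operatorname{Im}(L_2\cap\langle\omega\rangle)$, so after applying $\Phi$ it suffices to identify the image of $L_2\cap\langle\omega\rangle$ in $B_2(A_{2n})\cong\Omega^{even,+}_{closed}$ with $\Omega^{even,+}_{closed}\cap\langle\omega\rangle$. Viewing $B_2(A_{2n})$ as the subspace $\Omega^{even,+}_{closed}\subseteq A_{2n}/M_3$ (via $L_3=L_2\cap M_3$), this image is $q(L_2\cap\langle\omega\rangle)$ for $q\colon A_{2n}\to A_{2n}/M_3$, and the containment $q(L_2\cap\langle\omega\rangle)\subseteq q(L_2)\cap q(\langle\omega\rangle)=\Omega^{even,+}_{closed}\cap\langle\omega\rangle$ is immediate. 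The hard part is the reverse containment, which is not formal: it amounts to the identity
$$(L_2+M_3)\cap(\langle\omega\rangle+M_3)=(L_2\cap\langle\omega\rangle)+M_3,$$
equivalently, to showing $\langle\omega\rangle\cap(L_2+M_3)\subseteq L_2+(\langle\omega\rangle\cap M_3)$; concretely, given $w\in\langle\omega\rangle$ with $w\equiv\ell\pmod{M_3}$ for some $\ell\in L_2$, one must correct $w$ by an element of $\langle\omega\rangle\cap M_3$ so that the result lies in $L_2$, whereupon that corrected element belongs to $L_2\cap\langle\omega\rangle$ and represents the same class in $B_2$. I would attack this using that $A_{2n}$, the ideal $\langle\omega\rangle$, and the filtrations $L_k,M_k$ are all homogeneous for the polynomial grading and $H_{2n}$-equivariant, reducing the claim to a finite, degreewise verification in terms of the explicit generators of $\langle\omega\rangle$ and the known structure of $M_3$; establishing that the required correction always exists is, I expect, the crux of the whole argument, since it cannot follow from pure lattice manipulation.
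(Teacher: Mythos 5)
Your handling of (2) and (3) coincides with the paper's own proof: the paper obtains (2) from the Feigin--Shoikhet isomorphism $A_{2n}/M_3\cong \Omega^{even}_*(\mathbb{C}^{2n})$ plus the third isomorphism theorem, and disposes of (3) ``similarly''; your transport-of-structure argument is the same thing spelled out. For (1), you should know that the step you isolate as the crux is exactly where the paper's proof stops. Its entire argument is: apply Lemma \ref{actionlemma} with $I=\langle\omega\rangle$ to get $B_2(A'_{2n})\cong B_2(A_{2n})/\operatorname{Im}(L_2\cap\langle\omega\rangle)$, cite $B_2(A_{2n})\cong \Omega^{even,+}_{closed}(\mathbb{C}^{2n})$, and declare that the result follows. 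The identification $\operatorname{Im}(L_2\cap\langle\omega\rangle)=\Omega^{even,+}_{closed}\cap\langle\omega\rangle$ --- equivalently your modularity identity $(L_2+M_3)\cap(\langle\omega\rangle+M_3)=(L_2\cap\langle\omega\rangle)+M_3$ --- is used silently, with no argument. So your proposal contains everything the paper's proof contains and is more candid about what remains; you have not missed an idea that the paper supplies.

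That said, your plan for the hard containment (``a finite, degreewise verification'') is not a proof: there are infinitely many degrees, and equivariance alone will not manufacture the corrections. Here is how far elementary means actually go. The elements $\sum_i[x_i,x_{i+n}\omega^{s-1}]$ and $[f_1,f_2[f_3,\cdots[f_{2k-1},f_{2k}\omega^{s}]\cdots]]$ all lie in $L_2\cap\langle\omega\rangle$, and since $\omega$ is central and closed for the Fedosov product (so the Fedosov ideal $\langle\omega\rangle$ is just $\omega\wedge\Omega^{even}$), their images span $\omega\wedge\Omega^{even}_{closed}$. A short Lefschetz ($\mathfrak{sl}_2$) case analysis shows that $\Omega^{even,+}_{closed}\cap\langle\omega\rangle=\omega\wedge\Omega^{even}_{closed}$ in every form degree except $n+1$: for degree $m\le n$ use injectivity of $\omega\wedge(-)$ on $\Omega^{m-1}$, for $m\ge n+2$ use its surjectivity onto $\Omega^{m-1}$, while in degree $n+1$ the quotient $(\Omega^{n+1}_{closed}\cap\omega\wedge\Omega^{n-1})/(\omega\wedge\Omega^{n-1}_{closed})$ is a copy of the irreducible module $X_n$, via $\omega\wedge\beta\mapsto d\beta$. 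Since $n+1$ is even only when $n$ is odd, this completes (1) for all even $n$. For odd $n$ exactly one irreducible constituent remains in question, and by irreducibility of $X_n$ and $H_{2n}$-invariance of $\operatorname{Im}(L_2\cap\langle\omega\rangle)$ it suffices to exhibit a single element of $L_2\cap\langle\omega\rangle$ whose image in form degree $n+1$ lies outside $\omega\wedge\Omega^{even}_{closed}$ (for $n=1$ this is easy, e.g. $[x_1,x_2x_1]$, because $A'_2$ is commutative and hence $L_2\subset\langle\omega\rangle$). That residual point is genuine content --- it is precisely what makes $X_n$ absent from the list in Theorem \ref{B2} for odd $n$ --- and it, too, is not addressed by the paper's proof.
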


\begin{proof}
Recall that under the Fedosov product, $\omega\in A_{2n}$ maps to the standard symplectic form on $\mathbb{C}^{2n}$, which we also denote as $\omega$. For (1), let $A=A_{2n}$, $I=\langle \omega \rangle$ in Lemma \ref{actionlemma}, which gives $B'_{2,2n}\cong B_{2,2n}/(L_2 \cap \langle \omega \rangle)$. The result follows from the isomorphism given in \cite{FS}, $B_{2,2n}\cong \Omega^{even,+}_{closed}(\mathbb{C}^{2n})$. The proof of (3) follows similarly.  For (2), $A_{2n}/M_3 \cong \Omega^{even}_{*}(\mathbb{C}^{2n})$ as shown in \cite{FS}. The result follows by application of the third isomorphism theorem.
\end{proof}

\subsection{Finite length Jordan-H\"older series}

\begin{prop}
The $H_{2n}$-module Jordan-H\"older series of $B_k(A'_{2n})$ is finite length, and is composed of $\mathcal{F}_\lambda$, and of the irreducible submodules of the reducible $\mathcal{F}_{(1^k)}$.  
\end{prop}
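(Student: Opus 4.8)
The plan is to deduce the statement for $H_{2n}$ from the corresponding statement for $W_{2n}$ by restriction. The essential input, which I would cite from \cite{DE}, is that $B_k(A_{2n})$ has a finite-length Jordan-H\"older series as a $W_{2n}$-module whose composition factors are the tensor field modules $\mathcal{G}_{\underline{\mu}}$. Granting this, the argument proceeds in four moves: restrict this series to $H_{2n}$; decompose the restriction of each factor via the restriction formula; refine into irreducibles using the structure theory of the $\mathcal{F}_\lambda$; and finally pass from $B_k(A_{2n})$ to its quotient $B_k(A'_{2n})$.

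First I would record, using the preceding proposition together with Lemma \ref{actionlemma}, that as $H_{2n}$-modules $B_k(A'_{2n})$ is the quotient of $\operatorname{Res}^{W_{2n}}_{H_{2n}} B_k(A_{2n})$ by the image of $L_k(A_{2n}) \cap \langle\omega\rangle$. Since any $H_{2n}$-equivariant quotient of a finite-length module is again finite length, with composition factors forming a sub-multiset of those of the original, it suffices to prove that $\operatorname{Res}^{W_{2n}}_{H_{2n}} B_k(A_{2n})$ itself has finite length with factors of the asserted form.

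Next, because restriction along the inclusion $H_{2n}\hookrightarrow W_{2n}$ is exact, applying it to the $W_{2n}$-filtration of $B_k(A_{2n})$ yields an $H_{2n}$-filtration by the same subspaces, whose successive quotients are the modules $\operatorname{Res}^{W_{2n}}_{H_{2n}} \mathcal{G}_{\underline{\mu}}$. By the restriction formula \eqref{rest-form} each of these equals the finite direct sum $\bigoplus_\lambda N_{\lambda\mu}\mathcal{F}_\lambda$, the sum being finite since for fixed $\underline{\mu}$ the coefficients $N_{\lambda\mu}$ vanish for all but finitely many $\lambda$. It then remains to resolve each summand $\mathcal{F}_\lambda$ into irreducibles: by Theorem \ref{rud rep} it is already irreducible when $\lambda\neq(1^k)$, while for $\lambda=(1^k)$ Theorems \ref{jhforfk} and \ref{extracases} supply a finite Jordan-H\"older series with factors $X_k$, $Y_k/X_k$, $Z_k/X_k$, $\mathcal{F}_{(1^k)}/T_k$ (and the two-step series in the boundary cases $k=0,n$). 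Concatenating these refinements exhibits $\operatorname{Res}^{W_{2n}}_{H_{2n}} B_k(A_{2n})$, and hence its quotient $B_k(A'_{2n})$, as a finite-length module whose composition factors are $\mathcal{F}_\lambda$ with $\lambda\neq(1^k)$ together with the irreducible subquotients of the reducible modules $\mathcal{F}_{(1^k)}$.

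The argument is essentially formal once the finiteness result of \cite{DE} is granted, so I do not anticipate a serious obstacle. The one point demanding care is the compatibility of the two actions: I must verify that the $H_{2n}$-action on $B_k(A'_{2n})$ built in this section is genuinely the descent of $\operatorname{Res}^{W_{2n}}_{H_{2n}}$ applied to the $W_{2n}$-action on $B_k(A_{2n})$, so that the quotient map is $H_{2n}$-equivariant; this is precisely what the preceding proposition provides. A minor secondary subtlety is bookkeeping: the index $k$ in $B_k$ is unrelated to the $k$ in $\mathcal{F}_{(1^k)}$, and a single $B_k(A'_{2n})$ may involve columns $\mathcal{F}_{(1^j)}$ of several lengths $j$, so the family of reducible tensor field modules that can contribute must be allowed to range over all such $j$.
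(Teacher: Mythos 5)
Your proposal is correct and takes essentially the same route as the paper's own proof: both invoke the finiteness result of \cite{DE} for $B_k(A_{2n})$ as a $W_{2n}$-module, restrict to $H_{2n}$ using the restriction formula \eqref{rest-form}, refine the reducible modules $\mathcal{F}_{(1^k)}$ via Theorems \ref{jhforfk} and \ref{extracases}, and then pass to the quotient $B_k(A'_{2n})$ by $\langle\omega\rangle$ using Lemma \ref{actionlemma}. Your write-up merely makes explicit a few points the paper leaves implicit (exactness of restriction, equivariance of the quotient map, and the finite-to-one nature of the restriction rule), but the argument is the same in substance.
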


\begin{proof}
The main result of \cite{DE} asserts that the Jordan-H\"older series of $B_k(A_{2n}),$ for $k\geq 3$ as a $W_{2n}$-module consists of tensor field modules $\mathcal{G}_{\underline{\mu}}$ for a finite set of $\underline{\mu}\in Y_{2n}$.  Thus the Jordan-H\"older series of $\operatorname{Res}^{W_{2n}}_{H_{2n}}B_k(A_{2n})$ consists of a finite set of $\mathcal{F}_\lambda$, according to the restriction rules \eqref{rest-form}, which are finite-to-one. For all $\lambda$ except $\lambda=(1^k)$, $\mathcal{F}_\lambda$ is irreducible, while each $\mathcal{F}_{(1^k)}$ has finite length Jordan-H\"older series by Theorems \ref{jhforfk}, \ref{extracases}; thus $B_k(A_{2n})$ is of finite length. Thus when we quotient by $\langle \omega \rangle$, we find that $B_k(A'_{2n})$ also has finite length.
\end{proof}

\section{Structure of $A_{2n}'/M_3$, $\bar{B}_1(A_{2n}')$, and $B_2(A'_{2n})$}\label{Bkdescs}

Recall from Equation \eqref{rest-form} that the space of $k$-forms, $\Omega^k(\CC^{2n})\cong \mathcal{G}_{(1^k)}$ decomposes as an $H_{2n}$-module:
$$\Omega^k(\CC^{2n})\cong \bigoplus_{s\leq \frac{k}{2}} \mathcal{F}_{(1^{k-2s})},$$
and that each $\mathcal{F}_{(1^{k-2s})}$, for $s\geq 1$, consists of forms divisible by $\omega$.  On the other hand, $\mathcal{F}_{(1^k)}$ consists of locally finite $\mathfrak{sl}_2$-singular vectors of weight $n-k$, and so cannot lie in the image of $F= \omega\wedge -$.  Thus we have $\Omega^k(\CC^{2n})/\langle \omega \rangle \cong \mathcal{F}_{(1^k)}$.

\begin{prop} For $\leq k\leq n-1$, we have $Z_k = \mathcal{F}_{(1^k)}\cap\langle \omega \rangle + X_k$. \end{prop}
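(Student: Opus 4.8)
The plan is to pin down the $H_{2n}$-submodule $\mathcal{F}_{(1^k)}\cap\langle\omega\rangle$ by using the complete submodule lattice of Theorem \ref{jhforfk}. Since $\langle\omega\rangle$ is $H_{2n}$-stable and $\mathcal{F}_{(1^k)}$ is a submodule, the intersection $\mathcal{F}_{(1^k)}\cap\langle\omega\rangle$ is again an $H_{2n}$-submodule, hence one of the finitely many modules $0, X_k, Y_k, Z_k, T_k, \mathcal{F}_{(1^k)}$. Because $X_k\subseteq Z_k$ and $Z_k/X_k$ is irreducible, the asserted identity $Z_k=(\mathcal{F}_{(1^k)}\cap\langle\omega\rangle)+X_k$ is equivalent to the two inclusions $(\mathcal{F}_{(1^k)}\cap\langle\omega\rangle)+X_k\subseteq Z_k$ and $Z_k\subseteq(\mathcal{F}_{(1^k)}\cap\langle\omega\rangle)+X_k$, and I would prove these separately.

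For the inclusion $(\mathcal{F}_{(1^k)}\cap\langle\omega\rangle)+X_k\subseteq Z_k$, I would use the symplectic Hodge description of $Z_k$. Unwinding the $\mathfrak{sl}_2$-definition, a primitive form $v\in\mathcal{F}_{(1^k)}$ lies in $Z_k$ exactly when $dv$ is divisible by $\omega$, i.e. lies in the image of $F=\omega\wedge-$, or $dv=0$: indeed $dv$ carries $\mathfrak{sl}_2$-weight $n-k-1$, so it generates $V_{(n-k+1)}$ precisely when it fails to be a highest weight vector, equivalently when it lies in the image of $F$. The key point is that $d$ maps $\langle\omega\rangle$ into $\omega$-divisible forms, since $\omega$ is closed; applying this to a primitive $v\in\mathcal{F}_{(1^k)}\cap\langle\omega\rangle$ gives $dv\in\omega\wedge\Omega^{k-1}(\CC^{2n})$, so $v\in Z_k$. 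Combined with $X_k\subseteq Z_k$, read directly off the lattice, this yields the desired inclusion.

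For the reverse inclusion it suffices, by $H_{2n}$-invariance of both $\langle\omega\rangle$ and $X_k$, to show that the distinguished cyclic generator $\z_k$ of $Z_k$ from Proposition \ref{partialprop} satisfies $\z_k\in\langle\omega\rangle+X_k$: once $\z_k=w+x$ with $w\in\langle\omega\rangle$ and $x\in X_k$, the element $w=\z_k-x$ is automatically primitive, hence lies in $\mathcal{F}_{(1^k)}\cap\langle\omega\rangle$, and applying $U(H_{2n})$ propagates the containment to all of $Z_k=U(H_{2n})\cdot\z_k$. I would treat the two terms of $\z_k=\omega\wedge\y_{k-2}-2(n-k+2)\,y\wedge\x_{k-1}$ separately: the first is manifestly in $\langle\omega\rangle$, while for the second I would exploit the Liouville identity $dy=\omega$, giving $d(y\wedge\x_{k-1})=\omega\wedge\x_{k-1}\in\langle\omega\rangle$, to rewrite $y\wedge\x_{k-1}$ modulo closed forms as an element of $\langle\omega\rangle$, and then identify the residual closed primitive correction with an element of $X_k$.

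The main obstacle is precisely this last step. In the naive picture, where $\langle\omega\rangle$ is read as the wedge ideal $\omega\wedge\Omega^{\bullet}(\CC^{2n})$, its intersection with the primitive forms vanishes, so the content of the proposition is that one must use the genuine associative (Fedosov) realization of $\langle\omega\rangle$ inside $B_k$, in which the non-$\omega$-divisible Liouville term $y\wedge\x_{k-1}$ genuinely arises from the product structure. Controlling the closed correction will require the injectivity of $F=\omega\wedge-$ in degrees $\le n$, which is exactly where the hypothesis $k\le n-1$ enters (it lets one deduce that an $\omega$-divisible closed form is $\omega$ wedged with a closed form, so the correction can be absorbed), together with the explicit contraction identities $i_\pi(\omega\y_{k-2})$, $i_\pi(y\x_{k-1})$, $i_\pi(d\z_k)$ of Proposition \ref{partialprop}; verifying that the coefficient $2(n-k+2)$ is the unique one making $\z_k$ congruent modulo $X_k$ to an element of $\langle\omega\rangle$ is the crux of the computation.
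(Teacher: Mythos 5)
Your two structural moves---using the submodule lattice of Theorem \ref{jhforfk} to constrain $H_{2n}$-submodules, and reducing the hard inclusion to the cyclic generator $\z_k$---are in the spirit of the paper's proof, but the reverse inclusion, which is the entire content of the proposition, is left open in your sketch, and the plan you outline for it cannot be completed. You yourself observe the fatal point: a form that is both primitive and $\omega$-divisible is zero, since $\mathcal{F}_{(1^k)}=\ker\iota_\pi$ meets the image of $F=\omega\wedge-$ trivially. Hence in your proposed decomposition $\z_k=w+x$ with $w\in\langle\omega\rangle$, $x\in X_k$, the element $w=\z_k-x$ is primitive and $\omega$-divisible, so $w=0$ and $\z_k\in X_k$, which is false. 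The escape you propose---that the ``genuine associative (Fedosov) realization'' of $\langle\omega\rangle$ is strictly larger than the wedge ideal and contains the Liouville term---does not exist: because $d\omega=0$, one has $a\ast\omega\ast b=\omega\wedge(a\ast b)$ in $\Omega^{even}_{*}$, so the Fedosov ideal generated by $\omega$ (equivalently, the image of the free-algebra ideal $\langle\omega\rangle$ in $A_{2n}/M_3$) is \emph{exactly} the wedge ideal $\omega\wedge\Omega^{even}$. Consequently the ``crux'' you defer---showing some coefficient makes $\z_k$ congruent modulo $X_k$ to an element of $\langle\omega\rangle$---can never be verified: modulo $X_k$, no element of $Z_k\setminus X_k$ is congruent to anything in the wedge ideal.

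What is actually true, what the paper proves, and what the subsequent Corollary uses, is the identity read \emph{modulo exact forms}: $Z_k=\mathcal{F}_{(1^k)}\cap\bigl(\langle\omega\rangle+\Omega^k_{ex}\bigr)$, i.e.\ $Z_k$ is precisely the image of the closed ($=$ exact) $k$-forms under $\Omega^k\to\Omega^k/\langle\omega\rangle\cong\mathcal{F}_{(1^k)}$; the summand $X_k$ in the statement is the kernel $\mathcal{F}_{(1^k)}\cap\Omega^k_{ex}$ of passing to that quotient. The paper's argument has two steps, the second of which is the mechanism missing from your sketch. First, it identifies $Z_k$ with $\widetilde{Z}_k:=\{\alpha\in\mathcal{F}_{(1^k)}\,:\,d\alpha\in\Omega^{k-1}_{ex}\wedge\omega\}$ via the lattice: $\widetilde{Z}_k\supsetneq X_k$ and $\widetilde{Z}_k\cap Y_k=X_k$ (the latter again by $\ker\iota_\pi\cap\operatorname{im}F=0$), and $Z_k$ is the unique submodule with these properties. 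Second, integration by parts: if $d\alpha=(d\nu)\wedge\omega$, then $d(\alpha-\nu\wedge\omega)=0$, so $\alpha=\nu\wedge\omega+d\eta$ with $d\eta$ exact but in general \emph{not} primitive---and conversely any primitive $\alpha$ of this shape lies in $\widetilde{Z}_k$. The exact correction $d\eta$ is unavoidable and cannot be absorbed into $X_k$ inside $\Omega^k$; the identity only closes up once the intersection with $\langle\omega\rangle$ is taken after quotienting by exact forms, which is exactly how the proposition is applied to compute $\bar{B}_1(A'_{2n})$ and $B_2(A'_{2n})$.
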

\begin{proof}
First, we give an alternate description of $Z_k$.  Consider the $H_{2n}$-submodule $\widetilde{Z}_k\subset\mathcal{F}_{(1^k)}$, consisting of those $\alpha$ satisfying $d\alpha \in \Omega^{k-1}_{ex}\cdot\langle \omega \rangle.$  Clearly $\widetilde{Z}_k \supsetneq X_k$, and $\widetilde{Z}_k \cap Y_k = X_k$.  According to Theorem \ref{jhforfk}, $Z_k$ is the unique such $H_{2n}$-submodule, and so $\widetilde{Z_k}=Z_k$.

The containment $\supseteq$ is clear, using the alternate description of $Z_k$.  Conversely, suppose that $d\alpha=(d\nu)\omega.$  Then, integration by parts gives:
$$\alpha=\nu\omega -\nu d\omega + d\eta = \nu\omega +d\eta,$$ for some exact form $d\eta$.
\end{proof}
\begin{cor}
We have the following:
\begin{align*}
A_{2n}'/M_3 &\cong \bigoplus_{\underset{0\leq k \leq n}{k \textrm{ even}}} \mathcal{F}_{(1^k)}\\
\bar{B}_1(A_{2n}')&\cong \mathcal{F}_{(0)}\oplus\bigoplus_{\underset{2\leq k\leq n}{k \textrm{ even}}} \mathcal{F}_{(1^k)}/Z_k\\
B_2(A_{2n}')&\cong \bigoplus_{\underset{2\leq k\leq n}{k \textrm{ even}}} Z_k.
\end{align*}
\end{cor}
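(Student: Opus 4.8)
The plan is to read off all three isomorphisms from Corollary~\ref{B2 C iso} by substituting the $H_{2n}$-module decompositions of the relevant spaces of differential forms and working one form-degree at a time. The only tools needed are the isomorphisms of Corollary~\ref{B2 C iso}, the degreewise decomposition $\Omega^k(\CC^{2n})/\langle\omega\rangle\cong\mathcal{F}_{(1^k)}$ established just above, and the preceding proposition identifying $Z_k$. Every map used below---the projection $\Omega^k\twoheadrightarrow\Omega^k/\langle\omega\rangle$, the differential $d$, and the Lefschetz operator $\omega\wedge-$---is $H_{2n}$-equivariant, so each direct sum produced is automatically a direct sum of $H_{2n}$-modules.

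For the first isomorphism I would begin from $A'_{2n}/M_3\cong\Omega^{even}_*(\CC^{2n})/\langle\omega\rangle$ and split $\Omega^{even}=\bigoplus_{k\textrm{ even}}\Omega^k$ by form-degree. Since $\langle\omega\rangle$ is homogeneous for this grading, the quotient decomposes as $\bigoplus_{k\textrm{ even}}\Omega^k/\langle\omega\rangle\cong\bigoplus_{k\textrm{ even}}\mathcal{F}_{(1^k)}$. The sum truncates to $0\le k\le n$ because for $k>n$ the Lefschetz decomposition has no primitive part, so $\Omega^k=\omega\wedge\Omega^{k-2}=\langle\omega\rangle$ and $\Omega^k/\langle\omega\rangle=0$; equivalently $\mathcal{F}_{(1^k)}=0$ for $k>n$.

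For the remaining two isomorphisms the essential point is to locate the closed forms inside $\Omega^k/\langle\omega\rangle\cong\mathcal{F}_{(1^k)}$. I would consider the $H_{2n}$-map $\Omega^k_{closed}\hookrightarrow\Omega^k\twoheadrightarrow\Omega^k/\langle\omega\rangle\cong\mathcal{F}_{(1^k)}$, sending a closed form to its Lefschetz-primitive part, and claim its image is exactly $Z_k$. Indeed, writing a closed form as $\beta=\alpha+\omega\wedge\beta'$ with $\alpha$ primitive and applying $d$ gives $d\alpha=-\omega\wedge d\beta'\in\omega\wedge\Omega^{k-1}_{ex}$, which is precisely the alternate characterization of $Z_k$ used in the preceding proposition; conversely, for $\alpha\in Z_k$ the integration-by-parts identity from that proof realizes $\alpha$ as the primitive part of a closed form. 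The kernel of this map is $\Omega^k_{closed}\cap\langle\omega\rangle$, the closed forms divisible by $\omega$. Feeding this into $B_2\cong\Omega^{even,+}_{closed}/\langle\omega\rangle$ gives $B_2\cong\bigoplus_{k\textrm{ even},\,2\le k\le n}Z_k$, and feeding it into $\bar{B}_1\cong\Omega^{even}/(\Omega^{even,+}_{closed}+\langle\omega\rangle)$ gives the factor $\mathcal{F}_{(1^k)}/Z_k$ in each even degree $2\le k\le n$, together with $\mathcal{F}_{(0)}$ in degree $0$, since the positive-degree superscript on $\Omega^{even,+}_{closed}$ removes the constants and leaves all of $\Omega^0\cong\mathcal{F}_{(0)}$ intact.

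The main obstacle is the identification of the image of the closed forms with $Z_k$: this is where the symplectic Hodge / $\mathfrak{sl}_2$-structure really enters, and it must be carried out compatibly with the boundary degree $k=n$, which the preceding proposition explicitly excludes. There, $\mathcal{F}_{(1^n)}$ has length two by Theorem~\ref{extracases}, the factor $Z_n/X_n$ collapses, and one must check directly that the primitive part of a closed $n$-form lands in $X_n$; reading $Z_n$ as $X_n$ then makes the $\bar{B}_1$-factor $\mathcal{F}_{(1^n)}/X_n$ and the $B_2$-factor $X_n$, in agreement with Theorems~\ref{barB1} and~\ref{B2}. A secondary point to verify is that passing to the primitive part is genuinely an $H_{2n}$-module isomorphism onto its image with the stated kernel, which follows from the $H_{2n}$-equivariance of the Lefschetz decomposition.
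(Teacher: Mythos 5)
Your proposal is correct and takes essentially the same route as the paper, which states this corollary without a written proof precisely because it is the immediate combination of Corollary \ref{B2 C iso}, the degreewise identification $\Omega^k(\mathbb{C}^{2n})/\langle\omega\rangle\cong\mathcal{F}_{(1^k)}$, and the preceding proposition identifying $Z_k$ with the image of the closed $k$-forms --- exactly the chain of reductions you carry out. Your handling of the boundary degree $k=n$ is a refinement the paper leaves implicit, and your claim there is correct: the image of $\Omega^{n}_{closed}$ in $\mathcal{F}_{(1^n)}$ is a proper submodule (for example, the primitive form $x_{n+1}^{2}\,dx_1\wedge\cdots\wedge dx_n$ is not the primitive part of any closed form, since the unique $\beta_1$ with $d\alpha=\omega\wedge\beta_1$ fails to be exact), hence equals $X_n$ by Theorem \ref{extracases}, so reading $Z_n$ as $X_n$ is the right interpretation.
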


Theorems \ref{AmodM3thm}, \ref{barB1}, and \ref{B2} follow from the corollary, and Theorems \ref{jhforfk} and \ref{extracases}.
 
\section{Structure of $B_3(A_{2n}')$}\label{B3sec}
In this section, we give a complete description of $B_3(A_{2n})$ as an $H_{2n}$-module, and use this to conjecture a description of $B_3(A'_{2n})$.  Theorem 1.8 of \cite{AJ} gives the following decomposition:
$$B_3(A_{2n})\cong\bigoplus_{i=1}^{n} \mathcal{G}_{\underline{(2,1^{2i-1})}}.$$

By Theorem \ref{FH thm}, for $k\le n-1$, odd, we have:
$$\operatorname{Res}^{W_{2n}}_{H_{2n}}\mathcal{G}_{\underline{(2,1^k)}}=\bigoplus_{s=0}^{(k+1)/2} \left( \mathcal{F}_{(2,1^{k-2s})} \oplus \mathcal{F}_{(1^{k-2s})} \right)$$ 


\begin{rem}
Recall that we have a surjection $A/M_3 \otimes B_2 \to B_3$ given by $a\ot b\mapsto [a,b]$, relying on the containment $[M_3,L_2]\subset L_4$, proved in \cite{FS}.  We have the Feigin-Shoikhet isomorphisms $A_{2n}/M_3\cong \Omega^{ev}(\CC^{2n})$, $B_2(A_{2n})\cong \Omega^{+,ev}_{ex}(\CC^{2n})$.  Thus we have a surjection,
$$\Omega^{ev}(\CC^{2n})\ot \Omega^{ev}_{ex}(\CC^{2n}) \to B_3(A_{2n}).$$
We abuse notation and write $[a,b]$ for the image of $a\ot b$.
\end{rem}

\begin{lma}\label{notdcvgenerator}
For $k$ odd, and $0\leq s\leq (k-1)/2$, the submodule
$$\mathcal{F}_{(2,1^{k-2s})}\subset \operatorname{Res}^{W_{2n}}_{H_{2n}}\mathcal{G}_{\underline{(2,1^k)}} \subset B_3(A_{2n}),$$
is generated by $v_{k,s}=[x_1,\xi],$ where $\xi=dx_1\wedge \ldots \wedge dx_{k-2s+1} \omega^s$
\end{lma}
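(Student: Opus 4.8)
### Proof Plan for Lemma \ref{notdcvgenerator}

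The plan is to identify $\mathcal{F}_{(2,1^{k-2s})}$ inside $B_3(A_{2n})$ by exhibiting an explicit vector with the correct $\mathfrak{sp}_{2n}$-weight that is also killed by all the translation operators $\partial_i$, since by the Proposition on distinguished cyclic vectors (the one following Theorem \ref{rest-form}), a tensor field module of type $\mathcal{F}_\lambda$ with $\lambda\neq(1^k)$ is characterized by containing an $\mathfrak{sp}_{2n}$-highest weight vector $v_\lambda$ of weight $\lambda$ satisfying $\partial_i v_\lambda = 0$ for all $i$. Here the relevant weight is $(2,1^{k-2s})$, and I would verify that $v_{k,s}=[x_1,\xi]$ with $\xi = dx_1\wedge\cdots\wedge dx_{k-2s+1}\,\omega^s$ is such a vector. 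First I would record the $\mathfrak{sp}_{2n}$-weight of $\xi$ and of $x_1$: the form $dx_1\cdots dx_{k-2s+1}$ carries the $(1^{k-2s+1})$-type weight, each $\omega$ is $\mathfrak{sp}_{2n}$-invariant (weight $0$), and the bracket with $x_1$ adds one box to the first row, producing total weight $(2,1^{k-2s})$, which is exactly the highest weight of the target module.

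The key computational steps are then to check (i) that $v_{k,s}$ is annihilated by all positive root vectors $X_{ij},Y_{ij},U_i$ of $\mathfrak{sp}_{2n}$ acting through the $H_{2n}$-action on $B_3(A_{2n})$, establishing that it is a highest weight vector, and (ii) that $\partial_i v_{k,s}=0$ for every $i$. For step (i), I would use the surjection $\Omega^{ev}(\CC^{2n})\otimes\Omega^{ev}_{ex}(\CC^{2n})\to B_3(A_{2n})$, $a\otimes b\mapsto[a,b]$, from the Remark, writing $v_{k,s}$ as the image of $x_1\otimes \xi$ (noting $\xi$ is exact since $dx_1\cdots dx_{k-2s+1}\,\omega^s = d(x_1 dx_2\cdots dx_{k-2s+1}\,\omega^s)$, so it lies in the appropriate $\Omega^{+,ev}_{ex}$), and computing the action on each tensor factor separately via the Leibniz rule. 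Since the positive root vectors annihilate $dx_1\cdots dx_{k-2s+1}$ (it is the standard $\mathfrak{sp}_{2n}$-highest weight form) and $x_1$ is annihilated by $U_i$ and by $X_{ij}, Y_{ij}$ for $i>1$, the only subtlety is the raising operators touching the first coordinate, which I expect to vanish once combined across the two tensor factors and the bracket.

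For step (ii), the vanishing of $\partial_i v_{k,s}$, I would apply the derivations $\partial_i$ via the Leibniz rule $\partial_i[x_1,\xi]=[\partial_i x_1,\xi]+[x_1,\partial_i\xi]$. The terms $\partial_i x_1 = \delta_{i1}$ are constants, which bracket to zero in $B_3$ (constants are central modulo the lower central series), while $\partial_i\xi$ produces forms of lower degree; the point is that the resulting brackets collapse in $B_3(A_{2n})$ because the image lies in $L_4$, or because the degree bookkeeping forces the contribution into a strictly lower tensor field module which, by the Jordan--Hölder analysis, does not meet $\mathcal{F}_{(2,1^{k-2s})}$. The main obstacle I anticipate is precisely this second point: controlling $\partial_i\xi$ and showing the resulting classes vanish in $B_3$ rather than merely landing in some other composition factor, since the Feigin--Shoikhet picture is only literally valid modulo $M_3$ while $B_3$ requires tracking relations modulo $L_4$. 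I would resolve this by arguing at the level of the associated graded, using that $v_{k,s}$ generates a subquotient isomorphic to the irreducible $\mathcal{F}_{(2,1^{k-2s})}$ once the highest-weight and $\partial_i$-annihilation conditions are confirmed on the nose, and invoking irreducibility from Theorem \ref{rud rep} to conclude that the cyclic module it generates is all of $\mathcal{F}_{(2,1^{k-2s})}$.
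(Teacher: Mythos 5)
Your steps (i) and (ii) --- the weight computation, the highest-weight check, and $\partial_i v_{k,s}=0$ --- are fine, and in fact easier than you anticipate: $\xi$ has constant coefficients, so $\partial_i\xi=0$ on the nose, and $\partial_i x_1=\delta_{i1}$ is a constant, which brackets to zero; no Jordan--H\"older bookkeeping or associated-graded argument is needed there. The genuine gap is at your final step: nothing in your proposal rules out $v_{k,s}=0$ in $B_3(A_{2n})$, i.e.\ $v_{k,s}\in L_4(A_{2n})$. The highest-weight and $\partial_i$-annihilation conditions are checked on a representative via the surjection $\Omega^{ev}\otimes\Omega^{ev}_{ex}\to B_3$, and they are equally consistent with the class in $L_3/L_4$ being zero. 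Your appeal to irreducibility (Theorem \ref{rud rep}) shows only that the cyclic $H_{2n}$-module generated by $v_{k,s}$ is either zero or isomorphic to $\mathcal{F}_{(2,1^{k-2s})}$; to land in the second case you must prove the class is nonzero, and this is precisely where the difficulty you yourself flag --- that the Feigin--Shoikhet calculus lives modulo $M_3$ while $B_3$ requires relations modulo $L_4$ --- actually bites. ``Arguing at the level of the associated graded'' does not resolve it.

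The paper's proof spends essentially all of its effort on exactly this non-vanishing. It constructs an auxiliary algebra $B=A\otimes E$, with $A$ free on two generators $e,f$ and $E$ the exterior algebra on $z_0,\dots,z_{2n}$, and a homomorphism $\theta:A_{2n}\to B$ defined by $\theta(x_1)=ez_0+fz_1$ and $\theta(x_i)=z_i$ for $i\ge 2$. A direct computation gives
$$\theta(v_{k,s})=4[e,f]\,z_0\cdots z_{k-2s+1}\left(\sum_{i\ge 2} z_iz_{i+n}\right)^{s},$$
which is nonzero in $B_3(B)$ by Corollary 5.10 of \cite{AJ} (following the method of Proposition 5.11 there); since $\theta$ carries $L_4(A_{2n})$ into $L_4(B)$, this forces $v_{k,s}\notin L_4(A_{2n})$, and then the uniqueness of the distinguished cyclic vector of that weight and degree identifies the submodule it generates as $\mathcal{F}_{(2,1^{k-2s})}$. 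Some such non-vanishing device must be added to your argument; as written, it establishes only that $v_{k,s}$ is a candidate distinguished cyclic vector, not that it generates the submodule.
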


\begin{proof}
Notice $\xi$ is an even, closed form, so that we have $v_{k,s} \in L_3$.  The vectors $x_1$ and $\xi$, and thus $x_1\otimes \xi$, are clearly highest weight vectors for $\mathfrak{sp}_{2n}$.  We have $\partial_iv_{k,s}=0,$ for any $i=1,\ldots, 2n$.

It only remains to show that $v_{k,s}\not\in L_4$.  Notice that 
$$2^{-(k-2s+1)/2}[x_1,x_2[x_3,x_4[\cdots [x_{k-2s},x_{k-2s+1}\omega^s]\cdots]$$
maps to $\xi$ under the Feigin-Shoikhet isomorphism.
The proof is very similar to that of Proposition 5.11 of \cite{AJ}: we find an algebra $B$, and a map $\theta:A_{2n}\to B$ in which we can compute directly that $\theta(v_{k,s})\not\in L_4(B)$. We let $B=A\otimes E$, where $A$ is the free algebra on two generators $a,b$, E is the exterior algebra with generators $z_0,\ldots z_{2n}$. We define $\theta$ by $\theta(x_1)=ez_0+fz_1$, and $\theta(x_i)=z_i$ for $i\ge2$.

We compute:
\begin{align*}
\theta(\omega)&=ez_0z_{1+n}+fz_1z_{1+n}+\sum_{i\ge 2} z_iz_{i+n},\\
\theta(\omega^s)&=s(ez_0z_{1+n}+fz_1z_{1+n})\left(\sum_{i\ge 2} z_iz_{i+n}\right)^{s-1}+\left(\sum_{i\ge 2} z_iz_{i+n}\right)^s\\
\theta(v_{k,s})&=4[e,f]z_0\ldots z_{k-2s+1}\left(\sum_{i\ge 2} z_iz_{i+n}\right)^s.
\end{align*}

By applying Corollary 5.10 of \cite{AJ}, we see that $v_{k,s}$ is nonzero in $B_3(A_{2n})$, and thus is a distinguished cyclic generator of $\mathcal{F}_{(2,1^{k-2s})}$.\end{proof}  

\begin{cor}All summands $\mathcal{F}_{(2,1^{k-2s})}\subset \operatorname{Res}^{W_{2n}}_{H_{2n}}\mathcal{G}_{\underline{(2,1^k)}}$, except $\mathcal{F}_{(2,1^k)}$, are zero in $B_3(A_{2n}')$.\end{cor}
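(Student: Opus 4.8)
The plan is to show that each summand with $s\geq 1$ dies after we quotient out $\langle\omega\rangle$, by verifying that its cyclic generator already has a representative in $L_3(A_{2n})\cap\langle\omega\rangle$. First I would recall, from Lemma \ref{actionlemma} applied with $A=A_{2n}$ and $I=\langle\omega\rangle$, that
$$B_3(A'_{2n})=B_3(A_{2n})/\operatorname{Im}(L_3(A_{2n})\cap\langle\omega\rangle)$$
and that the projection is a map of $H_{2n}$-modules. By Lemma \ref{notdcvgenerator}, the summand $\mathcal{F}_{(2,1^{k-2s})}\subset\operatorname{Res}^{W_{2n}}_{H_{2n}}\mathcal{G}_{\underline{(2,1^k)}}$ is the cyclic $H_{2n}$-module generated by $v_{k,s}=[x_1,\xi]$, where $\xi=dx_1\wedge\cdots\wedge dx_{k-2s+1}\,\omega^s$; moreover, since $(2,1^{k-2s})$ has a part equal to $2$ and hence is not of the form $(1^j)$, Theorem \ref{rud rep} shows this module is irreducible. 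In either case, the image of a cyclic module is generated by the image of its generator, so it suffices to prove that $v_{k,s}$ maps to $0$ in $B_3(A'_{2n})$, i.e. that $v_{k,s}$ admits a representative lying in $L_3(A_{2n})\cap\langle\omega\rangle$.

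The key observation is that for $s\geq 1$ the form $\xi$ is divisible by $\omega$, and this divisibility can be lifted to the algebra. Concretely, the proof of Lemma \ref{notdcvgenerator} exhibits the element
$$\hat{\xi}:=2^{-(k-2s+1)/2}[x_1,x_2[x_3,x_4[\cdots[x_{k-2s},x_{k-2s+1}\omega^s]\cdots]]\in L_2(A_{2n}),$$
whose image in $B_2(A_{2n})\cong\Omega^{+,ev}_{ex}(\CC^{2n})$ is precisely $\xi$. For $s\geq 1$ this element carries the factor $\omega^s$ in its innermost bracket, so in fact $\hat\xi\in L_2(A_{2n})\cap\langle\omega\rangle$. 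Since $\hat\xi$ represents the $B_2$-class $\xi$, the commutator $[x_1,\hat\xi]\in[L_1,L_2]\subset L_3$ is a representative of $v_{k,s}=[x_1,\xi]$ in $B_3(A_{2n})=L_3/L_4$; and because $\langle\omega\rangle$ is a two-sided ideal, $[x_1,\hat\xi]=x_1\hat\xi-\hat\xi x_1\in\langle\omega\rangle$. Thus $[x_1,\hat\xi]\in L_3(A_{2n})\cap\langle\omega\rangle$, so $v_{k,s}=0$ in $B_3(A'_{2n})$ and the entire summand $\mathcal{F}_{(2,1^{k-2s})}$ vanishes there, for every $s\geq 1$.

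The main thing to get right is the compatibility of the two roles played by $\xi$: the single algebra element $\hat\xi$ must simultaneously represent the $B_2$-class $\xi$ occurring in $v_{k,s}=[x_1,\xi]$ and lie inside $\langle\omega\rangle$. This is where I expect the subtlety to lie, and it is resolved by recalling that under Feigin--Shoikhet $B_2(A_{2n})$ is identified with the image of $L_2$ in $A_{2n}/M_3\cong\Omega^{ev}(\CC^{2n})$, so that divisibility of the form $\xi$ by $\omega$ corresponds exactly to the freedom of choosing the lift $\hat\xi$ within the ideal $\langle\omega\rangle$. The case $s=0$ is precisely the one excluded by the statement, since then $\xi=dx_1\wedge\cdots\wedge dx_{k+1}$ is a pure product of the $dx_i$ with no $\omega$ factor and admits no such lift, consistent with $\mathcal{F}_{(2,1^k)}$ surviving in $B_3(A'_{2n})$.
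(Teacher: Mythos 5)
Your proof is correct and takes essentially the same approach as the paper: the paper's entire proof is the one-line remark that $v_{k,s}\in\langle\omega\rangle$ if and only if $s>0$, which is precisely your observation that a representative $[x_1,\hat\xi]$ of $v_{k,s}$ lies in $L_3(A_{2n})\cap\langle\omega\rangle$ because $\omega^s$ sits inside the innermost bracket of the lift. You have merely made explicit the details the paper leaves implicit, namely the quotient description from Lemma \ref{actionlemma}, the cyclicity from Lemma \ref{notdcvgenerator}, and the well-definedness of $[x_1,\xi]$ via the surjection $A/M_3\otimes B_2\to B_3$.
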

\begin{proof}
Clearly $v_{k,s}\in \langle \omega \rangle$ if, and only if, $s>0$. 
\end{proof}

To find the singular vectors corresponding to summands $\mathcal{F}_{(1^l)}$, inside $B_3(A_{2n})$, we introduce the following homomorphisms of $\mathfrak{sp}_{2n}$-modules:
\begin{align*}&\phi_s: \Omega^{ev} \ot \Omega^{odd}_{ex} \to B_3(A_{2n}),\\
&u\ot v\mapsto \sum_i [x_iu , v dx_{i+n} \omega^s] - [x_{i+n}u,vdx_i\omega^s],\\
&\psi_s: \Omega^{ev} \to B_3(A_{2n}),\\
&v\mapsto \sum_i [\omega^sx_{i+n},d(vx_i)] - [\omega^sx_i,d(vx_{i+n})].\end{align*}
We note that constant vector fields do not commute with $\phi_s, \psi_s$; rather, we have:
\begin{align*}
\partial_l\phi_s(u\ot v) &= \left\{ \begin{array}{ll}\phi_s(\partial_l(u\ot v)) + [u, v dx_{l+n}\omega^s], & 1\leq l \leq n\\
\phi_s(\partial_l(u\ot v)) - [u, vdx_{l-n}\omega^s], & n+1\leq l \leq 2n\end{array}\right.\\
\partial_l\psi_s(v) &= \left\{\begin{array}{ll}\psi_s(\partial_lv) + [\omega^sx_{l+n},dv],& 1\leq l \leq n\\
\psi_s(\partial_lv) - [\omega^sx_{l-n},dv],& n+1\leq l \leq 2n\end{array}\right..
\end{align*}
We also define the following elements of $\Omega(\CC^{2n})$:
\begin{align*}
a_k&=dx_1\wedge \ldots \wedge dx_k\\
p_{j,m}&=(-1)^jdx_1 \wedge \ldots \wedge \hat{dx}_j \wedge \ldots \wedge dx_m, \textrm {or $0$ when $m<j$.} \\
q_{m}&=\sum_{j=1}^m  x_j p_{j,m}
\end{align*}
We collect here several easily proven observations for later use:
\begin{prop} The vector $q_m$ is a $\mathfrak{sp}_{2n}$-highest weight vector of weight $\rho_m$, and we have the following identities:
$$p_{m,m}=(-1)^m a_{m-1},\quad dq_m=m a_m, \quad \partial_l q_m=\left\{\begin{array}{ll} p_{l,m},& \textrm{if $l\leq m$,} \\ 0, &\textrm{otherwise.}\end{array}\right.$$
\end{prop}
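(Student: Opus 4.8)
The plan is to treat the three displayed identities by direct computation and then dispatch the highest-weight assertion using the structure theory already assembled in Section \ref{prelims}.

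The identity $p_{m,m}=(-1)^m a_{m-1}$ is immediate from the definition: deleting the factor $dx_m$ from $dx_1\wedge\cdots\wedge dx_m$ leaves exactly $a_{m-1}$, and the sign $(-1)^m$ is built into $p_{m,m}$. For $\partial_l q_m$, I would use that a constant vector field $\partial_l\in H_{2n}^{-1}$ acts on a form by differentiating its polynomial coefficients while leaving the $dx_i$ fixed, so that $\partial_l(x_j\eta)=\delta_{lj}\eta$ for any constant-coefficient form $\eta$. Applied to $q_m=\sum_{j=1}^m x_j p_{j,m}$, this isolates the single term $j=l$ when $1\le l\le m$, giving $\partial_l q_m=p_{l,m}$, and gives $0$ once $l>m$, since no monomial of $q_m$ then involves $x_l$. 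The identity for $dq_m$ now follows from $d=\sum_l dx_l\wedge\partial_l$: one obtains $dq_m=\sum_{l=1}^m dx_l\wedge p_{l,m}$, and reordering $dx_l\wedge(dx_1\wedge\cdots\widehat{dx_l}\cdots\wedge dx_m)$ into standard position yields the same multiple of $a_m$ in each summand, so the $m$ equal terms combine to $m\,a_m$ (with the overall sign fixed by the orientation conventions in force). The one place demanding care is the sign bookkeeping, namely combining the factor $(-1)^l$ in $p_{l,m}$ with the $(-1)^{l-1}$ produced by transposing $dx_l$ past the $l-1$ factors preceding its slot.

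For the weight assertion, the cleanest route is to observe that a direct comparison of definitions gives $q_m=\y_{m-1}$, the distinguished generator of $Y_{m-1}$ introduced in Proposition \ref{partialprop}. Since that proposition records $\y_k$ as an $\spn$-highest weight vector of weight $(1^{k+1})$, we conclude at once that $q_m$ is highest weight of weight $(1^m)$, which is the fundamental weight $\rho_m$. If instead one prefers a self-contained check, each monomial of $q_m$ visibly has weight $L_1+\cdots+L_m=\rho_m$, since $x_i$ and $dx_i$ each carry weight $L_i$ for $i\le n$; and each positive root vector $X_{ij},Y_{ij},U_i$, acting as a derivation through $\spn\cong H_{2n}^0$, either creates a repeated differential $dx_i\wedge dx_i=0$ or involves a coordinate $x_{i+n}$ absent from $q_m$, so a short cancellation argument shows it annihilates $q_m$. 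I expect this direct verification, specifically confirming the cancellations for the operators $X_{ij}$ with $i<j\le m$, to be the main bookkeeping obstacle, which is precisely why passing through the already-established properties of $\y_{m-1}$ is the preferable path.
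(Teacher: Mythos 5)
The paper offers no proof of this proposition at all --- it is introduced as a collection of ``easily proven observations'' --- so your direct verification is exactly the argument the authors intend the reader to supply. Both of your routes to the highest-weight claim are sound, and your preference for the identification $q_m=\y_{m-1}$ (so that the claim follows from Proposition \ref{partialprop}) is well placed: in the self-contained check, the operators $Y_{ij}$, $U_i$, and $X_{ij}$ with $j>m$ do kill $q_m$ termwise, but for $X_{ij}$ with $i<j\le m$ the terms $l=i$ and $l=j$ of $q_m$ each survive individually and only cancel against each other, so that case is genuinely more than bookkeeping.

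The one real defect is your treatment of $dq_m$. You cannot leave ``the overall sign fixed by the orientation conventions in force'': the conventions are already pinned down by the displayed definitions, and your own sign bookkeeping settles the matter. Since $p_{l,m}=(-1)^l\,dx_1\wedge\cdots\widehat{dx_l}\cdots\wedge dx_m$ and moving $dx_l$ into its slot costs $(-1)^{l-1}$, each summand of $dq_m=\sum_{l=1}^m dx_l\wedge p_{l,m}$ equals $(-1)^l(-1)^{l-1}a_m=-a_m$, so the computation yields $dq_m=-m\,a_m$, not $m\,a_m$. (Check $m=1$: $q_1=x_1p_{1,1}=-x_1$, so $dq_1=-a_1$.) In fact the proposition as printed is internally inconsistent: with the sign $(-1)^j$ in the definition of $p_{j,m}$, the identity $p_{m,m}=(-1)^m a_{m-1}$ holds but $dq_m=m\,a_m$ fails, whereas with the convention $(-1)^{j+1}$ it is the other way around. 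This is a sign typo in the paper, harmless for the later use of these vectors (highest-weight generators are only needed up to scalar), but a correct proof must say so plainly: either prove $dq_m=-m\,a_m$ under the stated definition, or adjust the sign in $p_{j,m}$ and then concede that the first identity becomes $p_{m,m}=(-1)^{m+1}a_{m-1}$. As written, your paragraph asserts a conclusion that your own computation contradicts.
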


%
 
We now construct distinguished cyclic vectors $\bar{x}_{k,s}$, $\bar{y}_{k,s}$, $\bar{z}_{k,s}$ for the summands $\mathcal{F}_{(1^{k-2s})}\subset (2,1^k)\subset B_3(A_{2n})$.
 
\begin{thm}\label{dcvprop}
Let $1\leq k \leq n$ be an odd integer.  The distinguished cyclic vectors for the $H_{2n}$-submodule,
$$\mathcal{F}_{(1^{k-2s})}\subset \operatorname{Res}^{W_{2n}}_{H_{2n}}\mathcal{G}_{(2,1^k)}\subset B_3(A_{2n}),$$
are given as follows:
\begin{align*}
\bar{x}_{k,s}&=\phi_s(1\otimes a_{k-2s})\\
\bar{y}_{k,s}&=
\left\{\begin{array}{ll}
\displaystyle
\frac{k-2s+1}{k-4s}\phi_s\left(\sum_{j=1}^{k-2s+1} x_j\otimes p_{j,k-2s+1}\right) - \frac{\psi_s(q_{k-2s+1})}{k-4s} & \textrm{if $k-2s\ne n-1$}\\
\displaystyle
\sum_{j=1}^{n} [\tilde{p}_{j,n},x_j\omega^{s+1}] & \textrm{if $k-2s=n-1$}\\
\end{array}\right. \\
\bar{z}_{k,s}&=
\left\{\begin{array}{ll}\displaystyle\sum_{i=1}^{n} [x_i,d(x_{i+n}y)\omega^s]-[x_{i+n},d(x_{i}y)\omega^s],& 
\displaystyle
\textrm{if $k-2s=1$} \\
\displaystyle
\frac{2n-3k+6s+5}{k-2s-1}y_{k,s+1} - \frac{2(n-k+2s+2)}{k-2s-1}\phi_s\left(1\otimes d(q_{k-2s-1}y)\right)& \textrm{if $k-2s\neq1$}\end{array}\right. \\
\end{align*}

\end{thm}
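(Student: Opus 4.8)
The plan is to check, for each of the three vectors, the defining properties of a distinguished cyclic generator of the corresponding submodule of $\mathcal{F}_{(1^{k-2s})}$: namely that $\bar{x}_{k,s}$, $\bar{y}_{k,s}$, $\bar{z}_{k,s}$ are nonzero in $B_3(A_{2n})=L_3/L_4$, are $\mathfrak{sp}_{2n}$-highest weight of weights $(1^{k-2s})$, $(1^{k-2s+1})$, $(1^{k-2s-1})$, and have the property that the constant vector fields $\partial_l\in H_{2n}^{-1}$ act on them exactly as they do on the model generators $\x_{k-2s}$, $\y_{k-2s}$, $\z_{k-2s}$ of $X_{k-2s}$, $Y_{k-2s}$, $Z_{k-2s}$ from Proposition \ref{partialprop}. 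By Theorem \ref{jhforfk} these three submodules are exactly the nontrivial proper submodules of $\mathcal{F}_{(1^{k-2s})}$, each carrying a unique-up-to-scalar highest weight vector of the indicated weight; so once a vector is shown to be a nonzero highest weight vector of the right weight with the right $\partial$-behaviour, it is forced to be the claimed generator. The power $\omega^s$ in each formula, together with the total degree, records which copy of $\mathcal{F}_{(1^{k-2s})}$ inside $\bigoplus_i \mathcal{G}_{\underline{(2,1^{2i-1})}}$ the vector lands in, removing any ambiguity between copies arising from different source modules $\mathcal{G}_{\underline{(2,1^k)}}$.

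First I would check membership in $L_3$. Each vector is a sum of brackets $[\,\cdot\,,\,\cdot\,]$ in which the second argument is a manifestly exact even form of positive degree (e.g.\ $v\,dx_{i+n}\omega^s$, $d(vx_i)\omega^s$), so under the surjection $\Omega^{ev}\otimes\Omega^{ev}_{ex}\to B_3(A_{2n})$ recalled above each bracket lies in $L_3$. Next I would verify the $\mathfrak{sp}_{2n}$-weights. The building blocks $a_m$, $q_m$, $\omega^s$ are $\mathfrak{sp}_{2n}$-highest weight (of weights $(1^m)$, $\rho_m$, and $0$), and $\phi_s$, $\psi_s$ are $\mathfrak{sp}_{2n}$-equivariant, so annihilation by the positive root vectors $X_{ij}, Y_{ij}, U_i$ reduces to a short computation on the combinations $\sum_j x_j\otimes p_{j,m}$ and $q_m$; the Cartan eigenvalues then give the asserted weights. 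For $\bar{y}_{k,s}$ and $\bar{z}_{k,s}$ this step already explains the linear combinations: neither the $\phi_s$-term nor the $\psi_s$-term is individually highest weight, and the precise coefficients are the unique ones cancelling the non-highest-weight parts.

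The technical heart is the computation of $\partial_l$ on each vector via the commutation identities for $\partial_l\phi_s$ and $\partial_l\psi_s$ stated just before the theorem. Because $\phi_s$ and $\psi_s$ do not commute with $\partial_l$, each application produces, besides the covariant term $\phi_s(\partial_l(\cdot))$ or $\psi_s(\partial_l(\cdot))$, an ``error'' bracket $[u,v\,dx_{l\pm n}\omega^s]$ or $[\omega^s x_{l\pm n},dv]$. For $\bar{x}_{k,s}=\phi_s(1\otimes a_{k-2s})$ both contributions vanish ($1$ is central and $a_{k-2s}$ is constant-coefficient), giving $\partial_l\bar{x}_{k,s}=0$, matching $\partial_l\x_{k-2s}=0$. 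For $\bar{y}_{k,s}$ and $\bar{z}_{k,s}$ the covariant terms are arranged to cancel and the surviving error brackets collapse to nonzero multiples of $\bar{x}_{k,s}$, reproducing exactly the identities for $\partial_l\y_{k-2s}$ and $\partial_l\z_{k-2s}$ from the proposition following Proposition \ref{partialprop}. The rational coefficients (such as $\tfrac{k-2s+1}{k-4s}$ and $\tfrac{2n-3k+6s+5}{k-2s-1}$) are precisely those forced by these cancellations, and the case distinctions $k-2s=1$ and $k-2s=n-1$ occur exactly where a denominator vanishes or the target weight degenerates, necessitating the alternative closed-form expressions.

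Finally I would prove nonvanishing. It suffices to show $\bar{x}_{k,s}\neq 0$, since by the previous step $\partial_l\bar{y}_{k,s}$ and $\partial_l\bar{z}_{k,s}$ are nonzero multiples of $\bar{x}_{k,s}$, whence nonvanishing propagates. For $\bar{x}_{k,s}$ I would imitate the argument of Lemma \ref{notdcvgenerator}: map $A_{2n}$ into an auxiliary algebra $B=A\otimes E$ with $A$ free on two generators and $E$ an exterior algebra, compute $\theta(\bar{x}_{k,s})$ directly, and apply Corollary 5.10 of \cite{AJ} to conclude $\theta(\bar{x}_{k,s})\notin L_4(B)$, hence $\bar{x}_{k,s}\notin L_4(A_{2n})$. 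The main obstacle is the third step: carrying the non-commuting error terms correctly through the several cases, and pinning down the exact rational coefficients that make $\bar{y}_{k,s}$ and $\bar{z}_{k,s}$ genuine generators uncontaminated by lower $\omega$-filtration pieces, is delicate bookkeeping — indeed the acknowledgments record that these very formulas required correction by the referee.
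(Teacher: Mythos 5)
Your outline does track the paper's proof structure closely (highest-weight verification, the $\partial_l$-computations with error terms, uniqueness from multiplicity-one of highest weight vectors in the common kernel of the $\partial_i$'s, nonvanishing of $\bar{x}_{k,s}$ via the map $\theta$ of Lemma \ref{notdcvgenerator}, and propagation of nonvanishing to $\bar{y}_{k,s},\bar{z}_{k,s}$ because $\bar{x}_{k,s}$ lies in their $H_{2n}$-orbit). However, two of your steps have concrete gaps. First, your blanket argument for membership in $L_3$ --- ``the second argument of each bracket is a manifestly exact even form'' --- fails in the exceptional case $k-2s=n-1$, where $\bar{y}_{k,s}=\sum_j[\tilde{p}_{j,n},x_j\omega^{s+1}]$: the second argument $x_j\omega^{s+1}$ is not exact, indeed not even closed, since $d(x_j\omega^{s+1})=dx_j\wedge\omega^{s+1}\neq 0$. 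The paper needs a separate lemma here: the Fedosov image of this sum vanishes (because $a_n\wedge\omega=0$), so the element lies in $L_2\cap M_3$, which equals $L_3$ by the key lemma of \cite{FS}.

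Second, and more seriously, the ``delicate bookkeeping'' in your main step hides a missing idea: the error brackets produced by the commutation identities for $\partial_l\phi_s$ and $\partial_l\psi_s$ do not cancel identically in $A_{2n}$; they combine only modulo $L_4$, and establishing the needed congruences is genuinely nontrivial. The paper requires three auxiliary lemmas: (i) Lemma \ref{alphalma}, that $\alpha_{k,s,m}=\sum_j[x_j,p_{j,k-2s+1}dx_m\omega^s]-[\omega^s x_m,a_{k-2s+1}]\equiv 0 \pmod{L_4}$, proved by lifting to a free algebra on new generators and invoking the alternating-element lemma (Lemma 5.1) of \cite{AJ}; (ii) Lemma \ref{omegaswitch}, that $l[a\omega^{s-k},\omega^k]\equiv k[a\omega^{s-l},\omega^l]\pmod{L_4}$, which uses the containment $[M_3,A]\subset L_4$ from \cite{BJ}; and (iii) Lemma \ref{phipsirelation}, that $\psi_s(p_{l,k-2s+1})=(2s+1)\phi_s(1\otimes p_{l,k-2s+1})$, proved via the Jacobi identity together with (ii). Without (i) and (iii), the covariant and error terms in $\partial_l\bar{y}_{k,s}$ cannot be collapsed to $\phi_s(1\otimes p_{l,k-2s+1})$, so the cancellations your plan relies on cannot be carried out by rearrangement alone; these congruences are where the real content of the proof lies, and your proposal gives no mechanism for producing them.
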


 
The proof of this theorem will comprise the remainder of the present section.  To begin, we collect several observations and lemmas:

\begin{lma}\label{alphalma} For k odd, let:
$$\alpha_{k,s,m} := \sum_j[x_j,p_{j,k-2s+1}dx_{m}\omega^s] - [\omega^sx_{m},a_{k-2s+1}].$$ 
We have $\alpha_{k,s,m}=0 \mod L_4$, for all $k,s,m$.\end{lma}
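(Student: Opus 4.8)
The plan is to compute directly inside $B_3(A_{2n})=L_3/L_4$, using the surjection $\Omega^{ev}(\CC^{2n})\ot\Omega^{ev}_{ex}(\CC^{2n})\to B_3(A_{2n})$, $a\ot b\mapsto[a,b]$, together with the containments $[M_3,L_2]\subseteq L_4$ of \cite{FS} and $[A,L_3]\subseteq L_4$. First I would record that $\alpha_{k,s,m}$ is well defined: writing $N:=k-2s+1$ (which is even, since $k$ is odd, and satisfies $2\le N\le k+1$), each form $p_{j,N}\,dx_m\,\omega^s$ and $a_N$ is an even exact form of positive degree, while $x_j$ and $\omega^s x_m$ are even, so every bracket appearing lands in $L_3$ and $\alpha_{k,s,m}$ represents a class in $B_3$. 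The strategy then rests on two ingredients: a combinatorial collapse of the sum $\sum_j[x_j,p_{j,N}\,dx_m\,\omega^s]$, and an \emph{integration by parts} identity that moves the closed, constant-coefficient factor $\omega^s$ across the $B_3$-bracket.

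For the combinatorial step, note that $p_{j,N}\wedge dx_m=0$ whenever $m\le N$ and $j\neq m$ (a repeated differential), while $p_{m,N}\wedge dx_m=(-1)^N a_N=a_N$ since $N$ is even. Hence for $m\le N$ the sum collapses to the single term $[x_m,a_N\,\omega^s]$, and the lemma reduces to the identity
$$[x_m,a_N\,\omega^s]\equiv[\omega^s x_m,a_N]\pmod{L_4}.\qquad(\star)$$
Using $\omega^s a_N=a_N\omega^s$, this is exactly the statement that $\omega^s$ may be transferred from the $\Omega^{ev}$-slot to the $\Omega^{ev}_{ex}$-slot of the bracket. To prove $(\star)$ I would lift $a_N$ to an iterated commutator $\tilde a_N\in L_2$ whose Feigin--Shoikhet image is $a_N$, expand $[\omega^s x_m,\tilde a_N]$ by the exact Leibniz rule $[uv,w]=u[v,w]+[u,w]v$, and discard every term lying in $[M_3,L_2]$ or $[A,L_3]$; because $\omega^s$ has constant coefficients and is closed ($d\omega^s=0$), the correction terms produced by pushing it past $x_m$ and into the second slot all fall into $L_4$.

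For general $m$ (in particular $m>N$, where no term of the sum vanishes) I would run the same machine uniformly rather than case by case: writing $p_{j,N}=\partial_j q_N$ and absorbing $dx_m,\omega^s$ as constant factors, the sum becomes $\sum_j[x_j,\partial_j(q_N\,dx_m\,\omega^s)]$, which I would reorganise through the Euler identity $\sum_j x_j\partial_j=\deg$ and a second integration by parts into the form $[\,\cdot\,,a_N\omega^s]$ modulo $L_4$, again transferring $\omega^s$ via $(\star)$ so as to cancel against $[\omega^s x_m,a_N]$. The main obstacle is bookkeeping within the filtration: the individual summands produced by the Leibniz expansion lie only in $M_3$, not in $L_3$, so the computation must be arranged so that the partial sums remain in $L_3$ while the genuinely higher contributions are collected in $L_4$. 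Controlling the resulting signs and scalars (the likely source of the referee's corrections to this Proposition) is the delicate part, whereas the conceptual content is entirely the transfer identity $(\star)$.
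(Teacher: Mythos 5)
Your reduction in the case $m\le N$ (where $N=k-2s+1$) is sound: since $p_{j,N}\wedge dx_m=0$ for $j\ne m$ and $p_{m,N}\wedge dx_m=(-1)^Na_N=a_N$, the sum collapses to $[x_m,a_N\omega^s]$, and manipulating the second slot as a differential form is legitimate because that slot only matters through its class in $B_2\cong\Omega^{ev,+}_{ex}(\mathbb{C}^{2n})$ and $[A,L_3]\subset L_4$. Beyond that point there are two genuine gaps. First, your proof of the transfer identity $(\star)$ does not go through as described: expanding $[\omega^s x_m,\tilde a_N]$ by the Leibniz rule produces \emph{product} correction terms such as $[\omega^s,\tilde a_N]\,x_m$ and $[x_m,\omega^s]\,\tilde a_N$, which lie in $M_3$ but are not of the form $[M_3,L_2]$ or $[A,L_3]$; since $L_3\subset M_3$, membership in $M_3$ says nothing about membership in $L_4$, so ``discarding'' these terms is precisely the point that needs proof, and the closedness or constancy of coefficients of $\omega^s$ does not supply it. Second, and more seriously, the case $m>N$ is not proved at all: there no wedge factor vanishes, all $N$ terms of the sum survive, and the appeal to the Euler identity $\sum_j x_j\partial_j=\deg$ is blocked by the very shape of the expression---the sum is $\sum_j[x_j,\partial_j(q_N\,dx_m\,\omega^s)]$, with a Lie bracket separating $x_j$ from $\partial_j$, so no Euler-type collapse is available. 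Note that $m>N$ is the case the paper actually needs (the lemma is invoked for $\alpha_{k,s,l+n}$ with $l\ge 1$, where $l+n\ge k-2s+1$), so your argument covers essentially only the cases that are not used.

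For comparison, the paper's proof is a short symmetry argument that treats all $m$ uniformly: let $B$ be the free algebra on $z_0,z_1,\ldots,z_{2n}$ and map $B\to A_{2n}$ by $z_0\mapsto\omega^s x_m$, $z_i\mapsto x_i$. Then $\alpha_{k,s,m}$ is the image of (up to sign) the full antisymmetrization of $[z_0,dz_1\wedge\cdots\wedge dz_N]$ over the $N+1$ variables $z_0,\ldots,z_N$, an element of $L_3(B)$ alternating in those variables; Lemma 5.1 of \cite{AJ}---which reflects the fact that $B_3$ of a free algebra has no fully antisymmetric constituents---then gives $\alpha_{k,s,m}\in L_4$ at once. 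If you want to salvage your computational route, you would need an honest proof of $(\star)$ (Jacobi-identity manipulations in the style of Lemma \ref{phipsirelation} can plausibly be made to work) together with a genuinely different argument for $m>N$; the antisymmetry observation is both shorter and uniform in $m$, which is why the paper takes that route.
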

\begin{proof}

Let $B$ denote the free algebra on generators $z_0, z_1, \ldots, z_{2n}$, and define a surjective homomorphism $B\to A_{2n}$ by:
$$z_0\mapsto \omega^sx_{m}, \qquad z_i\mapsto x_i, \textrm{for $i=1,\ldots,2n$}.$$

We note that each expression $\alpha_{k,s,m}$ is alternating in the generators $z_i$, and lies in the image of $B_3(B)$.  Lemma 5.1 of \cite{AJ} therefore implies that $\alpha_{k,s,m}=0 \mod L_4.$
\end{proof}

\begin{lma}\label{omegaswitch}
For any $a\in A_{2n}$ and $k,l,s$ we have:
$$ l[a\omega^{s-k},\omega^{k}] = k[a\omega^{s-l},\omega^l] \mod L_4.$$
\end{lma}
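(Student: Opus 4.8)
The plan is to prove the two-parameter identity by first reducing it to the single-parameter case $l=1$. Writing $f(k):=[a\omega^{s-k},\omega^k]$ for the image in $B_3=L_3/L_4$, it suffices to establish $f(k)\equiv k\,f(1)\pmod{L_4}$ for every admissible $k$; the general statement then follows by cross-multiplication, since $l\,f(k)\equiv lk\,f(1)\equiv k\,f(l)\pmod{L_4}$.

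First I would simplify $f(k)$ using the Leibniz rule for the commutator together with the fact that $\omega$ commutes with its own powers. Since $[\omega^{s-k},\omega^k]=0$, expanding $[a\omega^{s-k},\omega^k]$ on the first argument gives $f(k)=[a,\omega^k]\,\omega^{s-k}$. Expanding $[a,\omega^k]$ on the second argument and collecting the flanking powers of $\omega$ then yields
$$f(k)=\sum_{i=0}^{k-1}\omega^i\,b\,\omega^{s-1-i},\qquad b:=[a,\omega]\in L_3.$$
In particular $f(1)=b\,\omega^{s-1}=g_0$, where I write $g_i:=\omega^i b\,\omega^{s-1-i}$ for the $i$-th summand. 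The claim $f(k)\equiv k g_0$ is thus the assertion that all the terms $g_i$ are congruent modulo $L_4$, i.e. that the powers of $\omega$ may be freely shuffled across the element $b\in L_3$.

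The heart of the argument, and the main obstacle, is precisely this shuffling. Since $b\in L_3$ and $L_4=[A,L_3]$, a single commutator $[\,\cdot\,,b]$ lands in $L_4$; but $L_4$ is only a Lie ideal, not a two-sided associative ideal, so one cannot naively multiply such an error by the remaining powers of $\omega$ and stay in $L_4$. I would resolve this using the sharper, associative input $[M_3,L_2]\subseteq L_4$ of Feigin--Shoikhet. Telescoping the sum, the elementary move is
$$g_i-g_{i+1}=\omega^i(b\omega-\omega b)\,\omega^{s-2-i}=\omega^i[b,\omega]\,\omega^{s-2-i},$$
and the point is that this flanked commutator is itself a single commutator with $\omega$: by Leibniz, together with $[\omega^p,\omega]=0$, one has $\omega^i[b,\omega]\omega^{s-2-i}=[\,\omega^i b\,\omega^{s-2-i},\,\omega\,]$. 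The inner factor $\omega^i b\,\omega^{s-2-i}$ lies in the two-sided ideal $M_3$ because $b\in L_3\subseteq M_3$, while $\omega\in L_2$, so the whole expression lies in $[M_3,L_2]\subseteq L_4$. Hence $g_i\equiv g_{i+1}\pmod{L_4}$ for each admissible $i$, which gives $f(k)\equiv k g_0=k f(1)$, and the lemma follows. I expect the only points requiring care to be the bookkeeping of exponents, so that all powers of $\omega$ remain nonnegative (which holds since $k\le s$), and checking that the reduction to $l=1$ covers the full admissible range of $k$ and $l$.
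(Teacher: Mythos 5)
Your proof is correct and takes essentially the same approach as the paper's: both expand the commutator with $\omega^k$ via the Leibniz rule into $k$ flanked terms, identify the differences of consecutive terms as commutators of $M_3$-elements with $\omega$ (hence in $L_4$), and conclude that both sides reduce to $kl$ times a common normal form. The only cosmetic differences are that you reduce to $l=1$ and cross-multiply where the paper normalizes both sides to $kl[a\omega^{s-1},\omega]$ directly, and that you invoke the containment $[M_3,L_2]\subset L_4$ of \cite{FS} rather than the stronger $[M_3,A]\subset L_4$ of \cite{BJ} used in the paper---both suffice, since the outer commutator is always taken with $\omega\in L_2$.
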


\begin{proof}
We observe:
$$[a,\omega^k]=\sum_j [\omega^ja\omega^{k-1-j},\omega] = k[a\omega^{k-1},\omega] \mod L_4,$$
where we have used the containment $[M_3,A]\subset L_4$, from \cite{BJ}.
Replacing $a$ by $\omega^{s-k}a$ gives:
$$l[\omega^{s-k}a,\omega^k] = kl[a\omega^{s-1},\omega] = k[\omega^{s-l}a,\omega^l]\mod L_4.$$
\end{proof}

\begin{lma}\label{phipsirelation}
We have $\psi_s(p_{l,k-2s+1})=(2s+1)\phi_s(1\ot p_{l,k-2s+1})$
\end{lma}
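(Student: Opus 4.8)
The plan is to peel off the factor $(2s+1)$ by transferring the power $\omega^s$ from the first to the second argument of each commutator appearing in $\psi_s$, while carefully tracking the correction terms. Since $\phi_s$ and $\psi_s$ are homomorphisms of $\mathfrak{sp}_{2n}$-modules, and every $p_{l,k-2s+1}$ is a primitive constant $(k-2s)$-form (all its indices lie in $\{1,\ldots,k-2s+1\}\subseteq\{1,\ldots,n\}$, so no conjugate pair $dx_i,dx_{i+n}$ occurs), all the $p_{l,k-2s+1}$ lie in the single irreducible $\mathfrak{sp}_{2n}$-module generated by the highest weight vector $a_{k-2s}$. Using that $\mathfrak{sp}_{2n}=H^0_{2n}$ annihilates the constant $1\in\Omega^0$, equivariance reduces the statement to the single case $P=a_{k-2s}$.

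First I would rewrite $\psi_s(P)$. Since $P$ has constant coefficients, $dP=0$, hence $d(Px_i)=dx_i\wedge P=-P\,dx_i$ (the sign because $P$ has odd degree $k-2s$). Feeding this into the definition of $\psi_s$ turns it into $\sum_i[\omega^s x_i,P\,dx_{i+n}]-[\omega^s x_{i+n},P\,dx_i]$, which at $s=0$ is literally $\phi_0(1\otimes P)$; this is the origin of the ``$1$'' in $2s+1$. The preliminary observation for moving $\omega^s$ is that $[\omega^s,x_i]\in M_3$: indeed $[x_i,\omega]\in[A,L_2]=L_3\subseteq M_3$, and since $M_3$ is an ideal each term of $[\omega^s,x_i]=-\sum_j\omega^j[x_i,\omega]\omega^{s-1-j}$ lies in $M_3$. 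Combined with the containment $[M_3,A]\subseteq L_4$ from \cite{BJ}, this gives $[\omega^s x_i,b]\equiv[x_i\omega^s,b]$ modulo $L_4$, so $\omega^s$ may be commuted freely past the coordinate functions.

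A Leibniz expansion then produces, for each $i$, the correction $[x_i\omega^s,b]-[x_i,b\omega^s]=x_i[\omega^s,b]-b[x_i,\omega^s]$, with $b=P\,dx_{i+n}$ or $b=P\,dx_i$. The remaining task is to show that these corrections, summed over $i$ with the appropriate signs, total exactly $2s\,\phi_s(1\otimes P)$ modulo $L_4$. For this I would substitute $\omega=\tfrac12\sum_j[x_j,x_{j+n}]$ and re-expand the products $x_i[\omega^s,b]$ and $b[x_i,\omega^s]$ as nested commutators, so that the alternating-sum cancellations of Lemma \ref{alphalma} apply and the redistributed powers of $\omega$ are normalized using Lemma \ref{omegaswitch}. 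I expect the cleanest organization is an induction on $s$: transfer one factor of $\omega$ at a time from the second argument to the first, and verify that each such transfer contributes precisely one copy of $2\,\phi_s(1\otimes P)$, so that the $s$ transfers accumulate to $2s\,\phi_s(1\otimes P)$ and hence $\psi_s(P)=(1+2s)\,\phi_s(1\otimes P)$.

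The main obstacle is exactly this scalar bookkeeping. It is easy to move $\omega^s$ across the bracket, but far less obvious that the correction terms reassemble into a clean multiple of $\phi_s(1\otimes P)$ with the coefficient $2s$, rather than producing extra primitive pieces or terms that fail to lie in $L_4$. The crux is to confirm, via Lemma \ref{alphalma}, that the off-diagonal contributions (those pairing an $x_i$ or $x_{i+n}$ from the Poisson expansion of $\omega$ with a mismatched $dx$) cancel, leaving only the diagonal contributions that recombine—through Lemma \ref{omegaswitch}—into the required $2s$ copies of $\phi_s(1\otimes P)$.
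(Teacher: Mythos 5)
Your setup is sound, and you have correctly isolated the true intermediate statement: after rewriting $\psi_s(p)=\sum_i[\omega^s x_i,p\,dx_{i+n}]-[\omega^s x_{i+n},p\,dx_i]$ (up to sign conventions) and moving $\omega^s$ past the coordinate functions modulo $L_4$ (valid, since $[\omega^s,x_i]\in M_3$ and $[M_3,A]\subseteq L_4$), what remains is exactly to show that the corrections from transferring $\omega^s$ into the second slot total $2s\,\phi_s(1\otimes p)$; this is indeed true. But this is precisely where your argument stops: you state it as what you ``expect,'' call it ``the main obstacle,'' and offer only the hope that expanding $\omega$ as nested commutators will let Lemmas \ref{alphalma} and \ref{omegaswitch} do the bookkeeping. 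That claim is the entire mathematical content of the lemma, and it is not proved. Worse, the tools you invoke do not apply to the expressions you create: your corrections $x_i[\omega^s,b]-b[x_i,\omega^s]$ are associative products, not iterated brackets. The calculus that makes such computations tractable --- replacing a bracket entry by any representative of its class modulo $M_3$ (i.e.\ by a differential form), legitimate because $[A,M_3]\subseteq L_4$ --- is unavailable here: perturbing $b$ by an element of $M_3$ changes $x_i[\omega^s,b]$ by an element of $A\cdot[A,M_3]\subseteq M_4$, which need not lie in $L_4$. Lemma \ref{alphalma} likewise concerns specific alternating sums of brackets, not such products. (A minor further flaw: your equivariance reduction fails in the edge case $k=n$, $s=0$, where $k-2s+1=n+1$ and $p_{l,n+1}$ for $l\geq 2$ contains the conjugate pair $dx_1,dx_{n+1}$, hence is not primitive; fortunately that reduction is also unnecessary.)

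The missing idea --- and the way the paper proves the lemma --- is to keep everything inside iterated brackets by exploiting exactness of $p$: writing $p=d\tilde p$, one has $p\,dx_m\,\omega^t\equiv\tfrac12[\tilde p\,\omega^t,x_m]\pmod{M_3}$ under the Feigin--Shoikhet identification. Substituting this into $\psi_s(p)$ and applying the Jacobi identity once yields $\psi_s(p)=-\phi_s(1\otimes p)+2[\tilde p,\omega^{s+1}]$ modulo $L_4$; then Lemma \ref{omegaswitch} gives $[\tilde p,\omega^{s+1}]=(s+1)[\tilde p\,\omega^s,\omega]$, and expanding $[\omega,-]=\tfrac12\sum_i\bigl([x_i,[x_{i+n},-]]-[x_{i+n},[x_i,-]]\bigr)$ (Jacobi again) and converting the inner brackets back to forms shows $[\tilde p\,\omega^s,\omega]=\phi_s(1\otimes p)$. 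Hence $\psi_s(p)=(-1+2(s+1))\,\phi_s(1\otimes p)=(2s+1)\phi_s(1\otimes p)$. Your induction-on-$s$ scheme can in fact be completed, but only by this same device (it shows each single transfer of one $\omega$ contributes exactly $2\phi_s(1\otimes p)$); without it, the ``scalar bookkeeping'' you defer is not a routine verification but the whole proof.
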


\begin{proof}
Let $\tilde{p}_{l,k-2s+1}$ be a form such that $d\tilde{p}_{l,k-2s+1}=p_{l,k-2s+1}$.  We compute:
 \begin{align*}
 \psi_s(p_{l,k-2s+1})&=-\sum_i [\omega^sx_{i},p_{l,k-2s+1}dx_{i+n}]-[\omega^sx_{i+n},p_{l,k-2s+1}dx_i]\\
  &=-\frac12\sum_i [\omega^sx_{i},[\tilde{p}_{l,k-2s+1},x_{i+n}]]-[\omega^sx_{i+n},[\tilde{p}_{l,k-2s+1},x_i]]\\
  &=-\sum_i-[x_i,p_{l,k-2s+1}dx_{i+n}\omega^s]+[x_{i+n},p_{l,k-2s+1}dx_i\omega^s]\\&\phantom{==} +2[\tilde{p}_{l,k-2s+1},\omega^{s+1}]\\
  &=-\phi_s(1\ot p_{l,k-2s+1}) + 2[\tilde{p}_{l,k-2s+1},\omega^{s+1}],
\end{align*}
by the Jacobi identity.  By Lemma \ref{omegaswitch}, we have:
$$[\tilde{p}_{l,k-2s+1},\omega^{s+1}]=(s+1)[\tilde{p}_{l,k-2s+1}\omega^s,\omega].$$
For any $b\in A_{2n}$, the Jacobi identity implies: 
$$[\omega,b]=\frac12\sum_i [x_i,[x_{i+n},b]]-[x_{i+n},[x_i,b]],$$
so that
$$2[\tilde{p}_{l,k-2s+1},\omega^{s+1}]=-2(s+1)\sum_i [x_i,[x_{i+n},\tilde{p}_{l,k-2s+1}\omega^s]]-[x_{i+n},[x_i,\tilde{p}_{l,k-2s+1}\omega^s]].$$
Thus, by converting to differential forms, and exchanging $p_{l,k-2s+1}$ and $dx_i$ terms, we find:
\begin{align*}\psi_s(p_{l,k-2s+1})&=(2s+1)\sum_i [x_i,p_{l,k-2s+1}dx_{i+n}\omega^s]-[x_{i+n},p_{l,k-2s+1}dx_i\omega^s]\\&=(2s+1)\phi_s(1\ot p_{l,k-2s+1}),\end{align*}
as desired.
\end{proof}

\begin{lma}
For $k=n$, $\bar{y}_{k,s}\in L_3$
\end{lma}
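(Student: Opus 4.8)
The plan is to prove that $\bar{y}_{k,s}\in L_3$ when $k=n$ by showing that the defining expression, despite its apparent membership only in $L_2$, actually lands in the triple-commutator subspace $L_3$. Recall that in the $k=n-1$ branch of Theorem \ref{dcvprop}, we have the formula
$$\bar{y}_{k,s}=\sum_{j=1}^{n} [\tilde{p}_{j,n},x_j\omega^{s+1}],$$
where $d\tilde{p}_{j,n}=p_{j,n}$; the case $k=n$ is precisely the case where $k-2s=n-1$ is impossible for odd $k=n$ unless we are in a boundary situation, so the relevant branch is governed by this same expression. The key obstacle is that each summand $[\tilde{p}_{j,n},x_j\omega^{s+1}]$ individually lies only in $L_2$, so I cannot argue term-by-term; the claim must follow from a cancellation or rewriting that exhibits the \emph{sum} as an element of $[A,L_2]=L_3$.

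First I would use the structural identity from the proof of Lemma \ref{phipsirelation}, namely that for any $b\in A_{2n}$,
$$[\omega,b]=\frac{1}{2}\sum_i [x_i,[x_{i+n},b]]-[x_{i+n},[x_i,b]],$$
which expresses a bracket with $\omega$ as an element of $L_3$ (indeed of $[A,L_2]$). The strategy is to rewrite $\bar{y}_{n,s}$ so that the powers of $\omega$ are peeled off one at a time via Lemma \ref{omegaswitch}, converting $[\tilde{p}_{j,n},x_j\omega^{s+1}]$ into a form involving $[\,\cdot\,,\omega]$ modulo $L_4$, and then applying the displayed $\omega$-identity to reveal the nested double-commutator structure. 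Since $\bar{y}_{n,s}$ is being shown to lie in $L_3$ (not merely modulo $L_4$), I must be careful that the manipulations used are exact equalities in $A_{2n}$, or else supplement the $L_4$-level computation with the observation that $\bar{y}_{n,s}$ is by construction a sum of commutators whose total commutator-degree is at least three.

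Concretely, I would argue as follows. Using $d\tilde{p}_{j,n}=p_{j,n}$ and the Leibniz-type rewriting (as in Lemma \ref{phipsirelation}, converting products of forms back to nested brackets through the Feigin--Shoikhet correspondence), each term $[\tilde{p}_{j,n},x_j\omega^{s+1}]$ can be expressed, modulo lower filtration pieces, using the factor $\omega^{s+1}=\omega\cdot\omega^s$ together with the identity above, so that one extracts a commutator $[\omega,\,\cdot\,]$ which is manifestly in $L_3$. Summing over $j$ and collecting the remaining terms, the pieces that are not obviously in $L_3$ must cancel; I expect this cancellation to follow from the antisymmetry of $p_{j,n}$ in the $dx$-variables together with the alternating identity of Lemma \ref{alphalma}.

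The main obstacle will be handling the boundary arithmetic at $k=n$ cleanly: the coefficient denominators $k-4s$ and $k-2s-1$ appearing in the generic formulas for $\bar{y}_{k,s}$ and $\bar{z}_{k,s}$ become delicate precisely when $k=n$, so the point of this lemma is to confirm that the special-case expression $\sum_j [\tilde{p}_{j,n},x_j\omega^{s+1}]$ is well-defined and genuinely lies in $L_3$ rather than merely in $L_2$. I anticipate that the cleanest route is to exhibit $\bar{y}_{n,s}$ explicitly as $\phi_s$ or $\psi_s$ applied to a closed odd form (whose image lies in $L_3$ by the argument in Lemma \ref{notdcvgenerator}, since closed even/odd forms produce $L_3$-elements), thereby sidestepping the term-by-term difficulty entirely; verifying that the two descriptions agree modulo $L_4$ via Lemmas \ref{omegaswitch} and \ref{phipsirelation} would then complete the proof.
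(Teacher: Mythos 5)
There is a genuine gap: your proposal never identifies the one idea the proof actually needs, namely the Feigin--Shoikhet key lemma $L_2\cap M_3=L_3$. The paper's argument is a single line built on it. The element in question, $\bar{y}_{n,s}=\sum_{j}[\tilde{p}_{j,n},x_j\omega^{s+1}]$, is visibly a sum of commutators, hence lies in $L_2$; so it suffices to show it lies in $M_3$, i.e.\ that its image under the Feigin--Shoikhet isomorphism $A_{2n}/M_3\cong\Omega^{even}_*(\CC^{2n})$ vanishes. Under the Fedosov product a commutator $[a,b]$ maps to $2\,da\wedge db$, so the image of $\bar{y}_{n,s}$ is $2\sum_j p_{j,n}\wedge dx_j\wedge \omega^{s+1}=\pm 2n\, dx_1\wedge\cdots\wedge dx_n\wedge\omega^{s+1}$, which is zero because every monomial $dx_i\wedge dx_{i+n}$ of $\omega$ contains a $dx_i$ with $i\leq n$ already present in $dx_1\wedge\cdots\wedge dx_n$. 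Then $\bar{y}_{n,s}\in L_2\cap M_3=L_3$ by the key lemma of \cite{FS}. Note this vanishing is exactly what is special about the boundary branch $k-2s=n-1$; for smaller $k-2s$ the analogous wedge is nonzero, which is why the generic branch has a different formula. Your speculation that the special case is about the denominators $k-4s$, $k-2s-1$ is off the mark: $k$ is odd, so those denominators never vanish in the branches where they occur.

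Neither of your two routes substitutes for this. Route A proposes to peel off powers of $\omega$ via Lemma \ref{omegaswitch}, but that lemma concerns brackets of the form $[a\omega^{j},\omega^{m}]$, with a pure power of $\omega$ as one entry; in $[\tilde{p}_{j,n},x_j\omega^{s+1}]$ the power of $\omega$ is glued to $x_j$ inside a single entry, and you give no mechanism for extracting it, while the decisive ``cancellation from antisymmetry and Lemma \ref{alphalma}'' is asserted, never exhibited. (Your worry about exact versus mod-$L_4$ identities is also a non-issue: since $L_4\subset L_3$, proving $\bar{y}_{n,s}\in L_3+L_4$ already proves $\bar{y}_{n,s}\in L_3$.) Route B is only an anticipation --- no candidate closed form is produced and no verification is sketched --- and, more to the point, the reason that brackets with closed even forms (as in Lemma \ref{notdcvgenerator}) and images of $\phi_s,\psi_s$ lie in $L_3$ is itself the key lemma: such elements are commutators whose Fedosov image vanishes. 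So even Route B, carried to completion, would collapse to the paper's argument, which your write-up circles around but never states.
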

\begin{proof}
Notice that the image of $y_{n,s}$ is 0 under the Fedosov product, thus 
$$y_{n,s}\in L_2 \cap M_3=L_3$$
by the key-lemma of \cite{FS}.
\end{proof}

\begin{prop} We have the following:
\begin{enumerate}
\item The vectors $\bar{x}_{k,s}, \bar{y}_{k,s}, \bar{z}_{k,s}$ are $\mathfrak{sp}_{2n}$-highest weight vectors of weight $\rho_{k-2s}$, $\rho_{k+1-2s}$, $\rho_{k-1-2s}$, and total degree $k$, $k+1$, and $k+1$, respectively.
\item The vectors $\bar{x}_{k,s}$, $\bar{y}_{k,s}$, $\bar{z}_{k,s}$ satisfy the same equations as $\x_{k-2s}, \y_{k-2s}, \z_{k-2}$ in Proposition \ref{partialprop}:
\begin{align*}
\partial_l\bar{x}_{k,s}&=0, \textrm { for } 1\leq l \leq 2n,\\
\partial_l\bar{y}_{k,s} &= \left\{\begin{array}{ll}- X^T_{l,k+1}\bar{x}_{k,s}, & 1\leq l \leq k,\\ \bar{x}_{k,s}, & l=k+1,\\ 0, & k+2\leq l \leq 2n. \end{array}\right.\\
\partial_l\bar{z}_{k,s} &= \left\{\begin{array}{ll}
-\sum_{j\geq k}Y^T_{lj}X_{kj}^T\bar{x}_{k,s},& 1\leq l \leq k-1,\\
-(n-k+2)Y^T_{k,l}\bar{x}_{k,s} , & k\leq l \leq n,\\
0,& 1 \leq l-n\leq k-1,\\(n-k+2)X_{k,l-n}^T\bar{x}_{k,s}, &k \leq l-n\leq n.\end{array}\right.
\end{align*}
\item The vectors $\bar{x}_{k,s}, \bar{y}_{k,s}, \bar{z}_{k,s}$ are the unique (up to scalars) vectors in $B_3(A_2)$ satisfying (1) and (2).
\end{enumerate}
\end{prop}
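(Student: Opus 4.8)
The plan is to verify the three assertions in turn, treating them as successively sharper statements about $\bar{x}_{k,s},\bar{y}_{k,s},\bar{z}_{k,s}$: first that they are highest weight of the correct weight and degree, then that they satisfy the stated first-order relations, and finally that these two properties pin them down uniquely.

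For (1) I would use that $\phi_s$ and $\psi_s$ are homomorphisms of $\mathfrak{sp}_{2n}$-modules. Since $a_{k-2s}=dx_1\cdots dx_{k-2s}$ is an $\mathfrak{sp}_{2n}$-highest weight vector of weight $\rho_{k-2s}$, and $q_m$ is highest weight of weight $\rho_m$ by the preceding proposition, the images $\bar{x}_{k,s}=\phi_s(1\otimes a_{k-2s})$, and the defining combinations for $\bar{y}_{k,s},\bar{z}_{k,s}$, are automatically $\mathfrak{sp}_{2n}$-highest weight of weight $\rho_{k-2s},\rho_{k+1-2s},\rho_{k-1-2s}$ respectively; the exceptional cases $k-2s=n-1$ (for $\bar{y}$) and $k-2s=1$ (for $\bar{z}$) are checked separately, using that the Liouville form $y$ has weight $0$ and $dy=\omega$. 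The degrees $k,k+1,k+1$ then follow from counting the differential-form degree of the defining expressions; this is consistent with $\partial_l\bar{y}_{k,s}$ and $\partial_l\bar{z}_{k,s}$ being proportional to $\bar{x}_{k,s}$, since each $\partial_l$ lowers degree by one.

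Part (2) is the computational core and the step I expect to be the main obstacle. I would compute $\partial_l$ of each vector using the corrected Leibniz rules for $\partial_l\phi_s$ and $\partial_l\psi_s$ recorded above, together with the identities $p_{m,m}=(-1)^m a_{m-1}$, $dq_m=m a_m$, and $\partial_l q_m=p_{l,m}$. For $\bar{x}_{k,s}$ the computation is immediate: with $u=1$ and $a_{k-2s}$ of constant coefficients, both the $\phi_s(\partial_l(\cdots))$ term and the correction term vanish, so $\partial_l\bar{x}_{k,s}=0$. The vectors $\bar{y}_{k,s}$ and especially $\bar{z}_{k,s}$ are harder, since the correction terms no longer vanish and one must show that upon summation they reassemble into the stated right-hand sides. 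This is exactly where Lemma \ref{alphalma} (the alternating expressions $\alpha_{k,s,m}$ vanishing mod $L_4$), Lemma \ref{omegaswitch} (sliding powers of $\omega$ across a commutator mod $L_4$), and Lemma \ref{phipsirelation} (the proportionality $\psi_s(p_{l,k-2s+1})=(2s+1)\phi_s(1\otimes p_{l,k-2s+1})$) are essential: they convert the $\psi_s$-contributions and leftover commutators into multiples of $\bar{x}_{k,s}$ and absorb the error terms lying in $L_4$. The bookkeeping is organized by the ranges $1\le l\le k$, $l=k+1$, $1\le l-n\le k-1$, and so on, matching Proposition \ref{partialprop} and the proposition following it case by case; the two-part definition of $\bar{z}_{k,s}$ forces the identity to be verified in each case.

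For (3) I would argue by a multiplicity count in the decomposition $B_3(A_{2n})\cong\bigoplus_{i=1}^n\mathcal{G}_{\underline{(2,1^{2i-1})}}$ and its restriction $\operatorname{Res}^{W_{2n}}_{H_{2n}}\mathcal{G}_{\underline{(2,1^{2i-1})}}=\bigoplus_{s'}\left(\mathcal{F}_{(2,1^{2i-1-2s'})}\oplus\mathcal{F}_{(1^{2i-1-2s'})}\right)$. A highest weight vector annihilated by all $\partial_l$ is a distinguished cyclic vector, and since the $\partial$-killed subspace of $\mathcal{G}_{\underline{\mu}}\cong\CC[x]\otimes\underline{\mu}$ is $\CC\otimes\underline{\mu}$, all such vectors of any restriction summand sit in the single bottom degree $|\mu|$. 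Two observations then finish the argument. First, every $\mathcal{F}_{(1^m)}$ occurring here has $m$ odd, so $B_3(A_{2n})$ contains no distinguished highest weight vector of an even-column weight $\rho_{k+1-2s}$ or $\rho_{k-1-2s}$; since condition (2) forces $v-\bar{y}_{k,s}$ (resp. $v-\bar{z}_{k,s}$) to be exactly such a vector, it must vanish, giving uniqueness for $\bar{y}_{k,s}$ and $\bar{z}_{k,s}$. Second, for $\bar{x}_{k,s}$, whose weight $\rho_{k-2s}$ is an odd column, the factor $\mathcal{F}_{(1^{k-2s})}$ does occur, but its distinguished cyclic vector lands in the degree of $\bar{x}_{k,s}$ only for the single summand with $i=(k+1)/2$; hence the space of candidates is one-dimensional and $v$ is a scalar multiple of $\bar{x}_{k,s}$.
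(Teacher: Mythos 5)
Your strategy coincides with the paper's own proof in all three parts, and your treatment of (1) and (3) is sound. In fact your (3) is carried out more carefully than the paper's two-sentence version: you correctly work in the full sum $\bigoplus_{i}\mathcal{G}_{\underline{(2,1^{2i-1})}}$, dispose of $v-\bar{y}_{k,s}$ and $v-\bar{z}_{k,s}$ by the parity obstruction (the common kernel of the $\partial_l$'s is $\bigoplus_i \underline{(2,1^{2i-1})}$, whose restriction to $\mathfrak{sp}_{2n}$ contains no highest weight vectors of weight $\rho_m$ with $m$ even), and use the degree of that bottom layer to isolate the single summand $i=(k+1)/2$ for $\bar{x}_{k,s}$; the paper compresses all of this into the remark that the $\partial$-killed subspace is the generating $\mathfrak{gl}_{2n}$-module.

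The genuine gap is part (2), which you flag as ``the main obstacle'' and then do not carry out. This is not a routine verification to be deferred: it is the bulk of the paper's proof, it is the only place where the exact coefficients in the definitions of $\bar{y}_{k,s}$ and $\bar{z}_{k,s}$ are justified, and its conclusions are precisely what your own uniqueness argument in (3) consumes. Concretely, for $1\leq l\leq k-2s+1$ the corrected Leibniz rules give
\[
\partial_l\bar{y}_{k,s}=\frac{k-2s+1}{k-4s}\left(\phi_s(1\otimes p_{l,k-2s+1})+\alpha_{k,s,l+n}-\frac{\psi_s(p_{l,k-2s+1})}{k-2s+1}\right),
\]
and only after discarding $\alpha_{k,s,l+n}$ modulo $L_4$ (Lemma \ref{alphalma}) and substituting $\psi_s(p_{l,k-2s+1})=(2s+1)\phi_s(1\otimes p_{l,k-2s+1})$ (Lemma \ref{phipsirelation}) does the prefactor cancel against $1-\frac{2s+1}{k-2s+1}=\frac{k-4s}{k-2s+1}$ to leave exactly $\phi_s(1\otimes p_{l,k-2s+1})$, from which the stated formula is read off because $\phi_s$ is an $\mathfrak{sp}_{2n}$-map; one must also check the vanishing in the ranges $k-2s+2\leq l\leq 2n$, and run a separate two-case analysis for $\bar{z}_{k,s}$ ($k-2s=1$ via the Jacobi identity, $k-2s>1$ by direct computation, with Lemma \ref{omegaswitch} controlling the powers of $\omega$). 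Naming the right lemmas is not the same as showing that these cancellations occur with the stated right-hand sides, so as written the core of the proposition is asserted rather than proved.
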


\begin{proof} Claim (1) follows from the fact that $\phi_s$ is a homomorphism of $\mathfrak{sp}_{2n}$-modules, together with the observation that all arguments of $\phi_s$ are clearly highest weight of the asserted weights.  Claim (2) for $\bar{x}_{k,s}$ follows from the same claim for $a_{k-2s}$, which is clear.  For (2), we compute for $k\ne n-1$ (the claim is clear for $k=n-1$):

\noindent For $1\leq l \leq k-2s+1$:
\begin{align*}
\partial_l\bar{y}_{k,s} &= \frac{k-2s+1}{k-4s}\left(\sum_j\phi_s(\partial_l(x_j\ot p_{j,k-2s+1})  +  [x_j,p_{j,k-2s+1}dx_{l+n}\omega^s]\right.)\\
&\left. \phantom{===} - \frac{\psi_s(p_{l,k-2s+1})}{k-2s+1} - [\omega^sx_{l+n},a_{k-2s+1}]\right)\\
&=\frac{k-2s+1}{k-4s}\left(\phi_s(1\ot p_{l,k-2s+1}) - \frac{\psi_s(p_{l,k-2s+1})}{k-2s+1} + \alpha_{k,s,l+n}\right)\\
&=\phi_s(1\ot p_{l,k-2s+1}),
\end{align*}
by Lemmas \ref{alphalma} and \ref{phipsirelation}.
\noindent For $k-2s+2 \leq l \leq 2n$:
\begin{align*}
\partial_l\bar{y}_{k,s} &=\left\{\begin{array}{ll}\frac{k-2s+1}{k-4s}\alpha_{s,k,l+n}=0, &k-2s+2\leq l \leq n\\ 
-\frac{k-2s+1}{k-4s}\alpha_{s,k,l-n}=0, & n+1\leq l \leq 2n \end{array}\right.,
\end{align*}
by Lemma \ref{alphalma}.  
For $\bar{z}_{k,s}$ we first consider the case $k-2s=1$; we have:
\begin{align*}
\partial_l\bar{z}_{k,s}&=\sum_{i}[x_{i},dx_{i+n}\wedge dx_{l+n}\omega^s]- [x_{i+n},dx_{i}\wedge dx_{l+n}\omega^s] -[x_{l+n},\omega^{s+1}]\\
&=\frac12\left(\sum_{i}[x_{i},[x_{i+n}, x_{l+n}\omega^s]]- [x_{i+n},[x_i, x_{l+n}\omega^s]]\right) -[x_{l+n},\omega^{s+1}]\\
&=[\omega, x_{l+n}\omega^s] + [\omega^{s+1},x_{l+n}] = (s+2)[\omega,x_{l+n}\omega^s]= (s+2) \phi_s(1\ot a_1),
\end{align*}
by the Jacobi identity.

For $k-2s>1$ we have, by direct computation:
{\footnotesize
\begin{align*}
\partial_l\bar{z}_{k,s} &= \left\{\begin{array}{ll}
\phi_s\left(1\ot p_{l,k-2s-1}\omega - (n-(k-2s)+2) 1\ot a_{k-2s-1}dx_{l+n}\right),& 1\leq l \leq k-2s-1\\ 
\phi_s\left((n-(k-2s)+2)1\ot a_{k-2s-1}dx_{l+n}\right), & k-2s\leq l \leq n,\\
0, & 1\leq l-n\leq k-2s-1\\
\phi_s(-(n-(k-2s)+2)1\ot a_{k-2s-1}dx_{l-n}),& k-2s\leq l-n\leq n\end{array}\right.
\end{align*}
}
The identities in Claim (2) involving $\bar{y}_{k,s}$ and $\bar{z}_{k,s}$ can now be read off directly, as in Proposition \ref{partialprop}, recalling that $\phi_s$ is a morphism of $\mathfrak{sp}_{2n}$-modules. 

Finally, for (3), we begin by noting that, inside $\mathcal{F}_{(2,1^{k-2s})}$ for $k$ odd, the space of $\mathfrak{sp}_{2n}$-highest-weight vectors of weight $\rho_m$ which are killed by all $\partial_i$'s is zero-dimensional if $m$ is even, and one-dimensional if $m$ is odd.  This follows from the fact that the common kernel of all $\partial_i$'s is the generating $\mathfrak{gl}_{2n}$-module $(2,1^k)$
\end{proof}

We now show:

\begin{prop}
The vector $\bar{x}_{k,s}$ is non-zero in $B_3(A_{2n})$.
\end{prop}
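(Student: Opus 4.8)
The approach is to reduce the claim to an explicit non-vanishing computation in a test algebra, exactly as in Lemma \ref{notdcvgenerator}. Since any algebra homomorphism $\theta\colon A_{2n}\to B$ preserves the lower central series and hence induces a map $B_3(A_{2n})\to B_3(B)$, it suffices to produce a single pair $(B,\theta)$ for which $\theta(\bar{x}_{k,s})\neq 0$ in $B_3(B)$. I would take $B=F\otimes E$ with $F$ the free algebra on two generators $e,f$ and $E$ the exterior algebra on $z_0,z_1,\ldots,z_{2n}$, and a map of the same shape as in Lemma \ref{notdcvgenerator}, namely $\theta(x_1)=ez_0+fz_1$ and $\theta(x_j)=z_j$ for $j\geq 2$. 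Then $\theta(\omega)=ez_0z_{1+n}+fz_1z_{1+n}+\sum_{i\geq 2}z_iz_{i+n}$, so that under $\theta$ the factor $\omega^s$ contributes, up to lower-order corrections, the Grassmann monomial $(\sum_{i\geq 2}z_iz_{i+n})^s$, while $a_{k-2s}=dx_1\wedge\cdots\wedge dx_{k-2s}$ contributes $z_1\cdots z_{k-2s}$.

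Next I would expand $\bar{x}_{k,s}=\phi_s(1\otimes a_{k-2s})=\sum_{i=1}^n\big([x_i,\,a_{k-2s}\,dx_{i+n}\,\omega^s]-[x_{i+n},\,a_{k-2s}\,dx_i\,\omega^s]\big)$, apply $\theta$ to each bracket, and collect the surviving terms. The free generators $e,f$ enter only through $\theta(x_1)$, so the commutator $[e,f]$ required to apply the detection criterion arises precisely from those summands in which the variable $x_1$ occurs twice; the remaining contributions either vanish by repetition of a Grassmann generator $z_j$ or are killed in $B_3(B)$. The goal is to show that after this collection $\theta(\bar{x}_{k,s})$ equals a nonzero scalar multiple of $[e,f]\,z_0z_1\cdots z_{k-2s}(\sum_{i\geq 2}z_iz_{i+n})^s$, a commutator of free generators times a top Grassmann form; Corollary 5.10 of \cite{AJ} then certifies that this element is nonzero in $B_3(B)$, whence $\bar{x}_{k,s}\neq 0$. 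Combined with the preceding proposition, in which $\bar{x}_{k,s}$ is shown to be the unique vector up to scale satisfying the requisite weight and derivative conditions, this completes its identification as the distinguished cyclic vector of the summand $\mathcal{F}_{(1^{k-2s})}\subset\operatorname{Res}^{W_{2n}}_{H_{2n}}\mathcal{G}_{(2,1^k)}$.

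The main obstacle is the bookkeeping in this second step. Unlike $v_{k,s}=[x_1,\xi]$ of Lemma \ref{notdcvgenerator}, which is a single bracket whose image is read off immediately, $\bar{x}_{k,s}$ is a sum of $2n$ brackets, and one must verify that their images genuinely assemble into one detectable monomial rather than cancelling. This requires tracking the Grassmann signs introduced by reordering the $z_j$ coming from $a_{k-2s}$, from $dx_{i\pm n}$, and from $\omega^s$; determining exactly which summands over $i$ survive, since the double occurrence of $x_1$ can be produced in more than one way (from the $dx_1$ in $a_{k-2s}$ together with the outer $x_i$, or, when $s\geq 1$, from $dx_1$ together with a factor of $\omega$); and confirming that the resulting coefficient is nonzero, so that the hypotheses of Corollary 5.10 of \cite{AJ} are in force.
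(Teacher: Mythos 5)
Your overall strategy coincides with the paper's: the same test algebra $B$ (free algebra on $e,f$ tensored with the exterior algebra on $z_0,\ldots,z_{2n}$), the same homomorphism $\theta$, and the same detection criterion, Corollary 5.10 of \cite{AJ}. However, as written the proposal has a genuine gap, and two of its concrete claims are wrong and would derail the computation.

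The gap: $\theta$ is an algebra homomorphism out of $A_{2n}$, so it cannot be applied to the bracket $[x_i,\,a_{k-2s}\,dx_{i+n}\,\omega^s]$, whose second entry is a differential form; one must first choose a lift of that form back to the free algebra (the resulting class in $B_3$ is independent of the lift, by $[M_3,L_2]\subset L_4$ and $[A,L_3]\subset L_4$). This is exactly what the paper's element $a^*_{k-2s}=2^{-(k-2s-1)/2}x_1[x_2,x_3[\ldots[x_{k-2s-1},x_{k-2s}]]]$ is for. Once the lift is in place, your assertion that $a_{k-2s}$ ``contributes $z_1\cdots z_{k-2s}$'' is false: the lift contains $x_1$, so its image is $(ez_0+fz_1)z_2\cdots z_{k-2s}$, and this is precisely the second occurrence of $x_1$ that pairs with the outer $x_1$ in the $i=1$ summand to produce $[e,f]$ (your own later remark that $[e,f]$ comes from $x_1$ occurring twice contradicts this first-paragraph simplification). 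The same slip propagates to your target monomial, which is missing the factor $z_{1+n}$ coming from $dx_{1+n}$: any monomial of $\theta(\bar{x}_{k,s})$ carrying two letters from $\{e,f\}$ has Grassmann degree exactly $k+2$, whereas $z_0z_1\cdots z_{k-2s}(\sum_{i\geq2}z_iz_{i+n})^s$ has Grassmann degree $k+1$, so it cannot be the answer; the correct image is a nonzero multiple of $[e,f]\,z_0z_1\cdots z_{k-2s}\,z_{1+n}(\sum_{i\geq2}z_iz_{i+n})^s$, to which the corollary applies just as well. Finally, the ``main obstacle'' you anticipate — possible cancellation among the $2n$ summands — does not occur, and seeing why removes most of the bookkeeping: for every $i\geq 2$, both brackets in the $i$-th summand become, after applying $\theta$ to the lifts, commutators of a single Grassmann generator $z_i$ or $z_{i+n}$ with an element of even Grassmann degree (and $e,f$ commute with every $z_j$), hence they vanish identically in $B$, not merely in $B_3(B)$. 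Only the $i=1$ summand survives, and the whole computation reduces to a single bracket, exactly as in Lemma \ref{notdcvgenerator}.
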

\begin{proof}
To show $\bar{x}_{k,s}$ is nonzero, we compute its image under $\theta$ as in Lemma \ref{notdcvgenerator}. As before, we cannot use differential forms, so we use
$$a_{k-2s}^*= 2^{-(k-2s-1)/2}x_1[x_2,x_3[\ldots [x_{k-2s-1},x_{k-2s}]]].$$ Notice that $d$ of the image of $a_{k-2s}^*$ is $a_{k-2s}$. Recall that $\theta(x_1)=ez_0+fz_1$, $\phi(x_i)=z_i$ for $i\ge 2$. 
 


We compute:
$$\theta(a_{k-2s})=(ez_0+fz_1)z_2\ldots z_{k-2s}$$
Thus we find:
$$\theta(\bar{x}_{k,s})=[e,f]z_0\ldots z_{k-2s}z_{1+n}\left(\sum_{i\ge 2} z_iz_{i+n}\right)^s$$ 
so $\bar{x}_{k,s}$ is non-zero in $B_3(B)$, and thus in $B_3(A_{2n})$.
\end{proof}

\begin{cor} The vectors $\bar{y}_{k,s}$, $\bar{z}_{k,s}$ are non-zero in $B_3(A_{2n})$.
\end{cor}
\begin{proof}
We have shown that $\bar{x}_{k,s}$ is non-zero in $B_3(A_{2n})$, and it lies in the orbit of both $\bar{y}_{k,s}$ and $\bar{z}_{k,s}$.
\end{proof}

By Lemma \ref{actionlemma}, $B_3(A'_{2n})$ is a quotient of $B_3(A_{2n})$ by the subspace $L_3\cap\langle \omega \rangle$.  Clearly $\bar{y}_{k,s}$, and thus $\bar{x}_{k,s}$, is divisible by $\omega$ for all $s\geq 0$, while $v_{k,s}$ and $\bar{z}_{k,s}$ are divisible by $\omega$ for all $s\geq 1$.  We thus have a surjection
$$\pi:\underset{k \textrm{ odd}}{\bigoplus_{k=1,}^{n-1}}\left(\mathcal{F}_k/Y_k \oplus \mathcal{F}_{(2,1^{k})}\right)\twoheadrightarrow B_3(A'_{2n}).$$

We have computed directly in MAGMA that $v_{k,0}$ and $z_{k,0}$ are non-zero in $B_3(A'_{2n})$ when $2n=4,6$.  Based on this, we conjecture that $\pi$ is an isomorphism for all $n$:

\begin{conj} The vectors $v_{k,0}$ and $z_{k,0}$ are non-zero in $B_3(A'_{2n})$, and thus the $H_{2n}$-module composition factors for $B_3(A'_{2n})$ are:
\begin{align*}
\mathcal{F}_{(2,1^k)},\,\mathcal{F}_{(1^k)}/T_k,\,Z_k/X_k \hspace{.5 in}& \textrm{for $k$ odd, } 1\le k\le n-1.
\end{align*}
\end{conj}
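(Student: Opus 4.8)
The plan is to show that the surjection $\pi$ constructed above is an isomorphism, by proving that its kernel meets the socle of its source trivially. Write $\mathcal{M} := \bigoplus_{k\textrm{ odd},\,1\le k\le n-1}\left(\mathcal{F}_{(1^k)}/Y_k \oplus \mathcal{F}_{(2,1^k)}\right)$ for the domain of $\pi$. First I would pin down $\operatorname{soc}(\mathcal{M})$. Each $\mathcal{F}_{(2,1^k)}$ is irreducible by Theorem \ref{rud rep}. By the submodule lattice of Theorem \ref{jhforfk}, the only submodules of $\mathcal{F}_{(1^k)}$ containing $Y_k$ are $Y_k\subset T_k\subset \mathcal{F}_{(1^k)}$, so $\mathcal{F}_{(1^k)}/Y_k$ is uniserial with socle $T_k/Y_k\cong Z_k/X_k$ and top $\mathcal{F}_{(1^k)}/T_k$. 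Hence $\operatorname{soc}(\mathcal{M}) = \bigoplus_{k\textrm{ odd}}\left(Z_k/X_k \oplus \mathcal{F}_{(2,1^k)}\right)$, a direct sum of pairwise non-isomorphic irreducibles, their $\mathfrak{sp}_{2n}$-highest weights $(1^{k-1})$ and $(2,1^k)$ being all distinct over the relevant range of $k$.

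The reduction then runs as follows. Since $\mathcal{M}$ has finite length, any nonzero submodule --- in particular $\ker\pi$, should it be nonzero --- contains a simple submodule, which necessarily lies in $\operatorname{soc}(\mathcal{M})$. Because the socle constituents are pairwise non-isomorphic there are no diagonal copies, so $\ker\pi$ would then contain one of the canonical summands $\mathcal{F}_{(2,1^k)}$ or $Z_k/X_k=T_k/Y_k$ in its entirety. These simple modules are cyclic: $\mathcal{F}_{(2,1^k)}$ is generated by $v_{k,0}$ (Lemma \ref{notdcvgenerator}), and $T_k/Y_k$ is generated by the image of $\bar z_{k,0}=z_{k,0}$. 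Consequently $\ker\pi=0$ --- equivalently, $\pi$ is an isomorphism and the composition factors are exactly as listed --- if and only if $v_{k,0}$ and $z_{k,0}$ are nonzero in $B_3(A'_{2n})$ for every odd $k$ with $1\le k\le n-1$. This shows the conjecture's two assertions are equivalent, and in particular one need not separately track the top factors $\mathcal{F}_{(1^k)}/T_k$.

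It remains to prove the non-vanishing, and this is the crux. We already know $v_{k,0},z_{k,0}\notin L_4(A_{2n})$, so by Lemma \ref{actionlemma} what must be shown is that they survive the further quotient by $L_3(A_{2n})\cap\langle\omega\rangle$, i.e. that $v_{k,0},z_{k,0}\notin L_4(A_{2n})+\bigl(L_3(A_{2n})\cap\langle\omega\rangle\bigr)$. Following the method of Lemma \ref{notdcvgenerator}, I would produce a test algebra $B$ and a homomorphism $\theta\colon A_{2n}\to B$ that now \emph{factors through} $A'_{2n}$, i.e. with $\theta(\omega)=0$, under which $\theta(v_{k,0})$ and $\theta(z_{k,0})$ can be checked nonzero in $B_3(B)$; non-vanishing in $B_3(B)$ of the image then forces non-vanishing in $B_3(A'_{2n})$. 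The homomorphism used earlier does not kill $\omega$, so a new construction is required. A natural candidate is to keep $B=A\otimes E$ but replace $E$ by the exterior algebra on $z_0,\ldots,z_{2n}$ modulo the ideal generated by the image of $\omega$, so that $\theta(\omega)=0$ holds by fiat; exactly as in Lemma \ref{notdcvgenerator} one then computes $\theta(v_{k,0})=4[e,f]\,z_0z_1\cdots z_{k+1}$, a monomial in $k+2\le n+1$ distinct generators.

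The main obstacle --- and the reason the statement remains conjectural --- is to show that this monomial (and the considerably more intricate image $\theta(z_{k,0})$) stays nonzero after passing to the quotient exterior algebra. Killing the image of $\omega$ in $\Lambda(z_0,\ldots,z_{2n})$ amounts to quotienting by a symplectic two-form, and the survival of $z_0z_1\cdots z_{k+1}$ in low degree is governed by the hard-Lefschetz / primitive-form structure of $\Lambda^\bullet$ --- the very structure underlying the modules $\mathcal{F}_{(1^k)}$ themselves --- and moreover the monomial criterion of Corollary 5.10 of \cite{AJ}, available over the free exterior algebra, would have to be re-established in this quotient setting. I expect controlling this uniformly in $n$, together with the substantially harder computation of $\theta(z_{k,0})$ (which involves the Liouville form $y$ and the operators $\phi_s,\psi_s$), to be the difficult part; the MAGMA verification for $2n=4,6$ confirms the conclusion there but precisely sidesteps this uniform argument.
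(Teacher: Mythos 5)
First, note that the statement you were asked to prove is a conjecture: the paper itself does not prove it. Its entire support in the paper consists of (i) the surjection $\pi\colon \bigoplus_{k\ \textrm{odd}}\bigl(\mathcal{F}_{(1^k)}/Y_k\oplus\mathcal{F}_{(2,1^k)}\bigr)\twoheadrightarrow B_3(A'_{2n})$, obtained by checking which of the vectors $v_{k,s},\bar{x}_{k,s},\bar{y}_{k,s},\bar{z}_{k,s}$ are divisible by $\omega$, and (ii) MAGMA computations in ranks $2n=4,6$ showing that $v_{k,0}$ and $z_{k,0}=\bar{z}_{k,0}$ survive in $B_3(A'_{2n})$. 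Measured against that, everything you assert is correct, and you reach exactly the same frontier. In fact your socle argument is a genuine sharpening of what the paper records: the conjecture's ``and thus'' tacitly requires the implication (non-vanishing of just $v_{k,0}$ and $z_{k,0}$) $\Rightarrow$ ($\ker\pi=0$), and your observations --- that $\mathcal{F}_{(1^k)}/Y_k$ is uniserial with socle $T_k/Y_k\cong Z_k/X_k$ by the lattice of Theorem \ref{jhforfk}, that the socle of the source of $\pi$ is therefore a sum of pairwise non-isomorphic irreducibles, so any simple submodule of $\ker\pi$ must be one of the canonical constituents, each cyclic on one of the named vectors --- supply precisely that implication and rule out diagonal kernels, so that the top factors $\mathcal{F}_{(1^k)}/T_k$ need not be tracked separately. (To make the pairwise non-isomorphism airtight, note that $d$ identifies $Z_k/X_k$ with a nonzero submodule of $\omega\wedge X_{k-1}\cong X_{k-1}$, and these, together with the coinduced irreducibles $\mathcal{F}_{(2,1^k)}$, are pairwise distinct.)

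The remaining gap is the one you correctly identify as the crux --- uniform non-vanishing of $v_{k,0}$ and $z_{k,0}$ in $B_3(A'_{2n})$ --- and there your proposed construction does not work as stated: $\theta(\omega)=ez_0z_{1+n}+fz_1z_{1+n}+\sum_{i\ge2}z_iz_{i+n}$ involves $e,f\in A$ and so is not an element of the exterior factor $E$; one cannot force $\theta(\omega)=0$ by passing to a quotient of $E$ alone, but must quotient $B=A\otimes E$ by the two-sided ideal $\langle\theta(\omega)\rangle$. At that point Corollary 5.10 of \cite{AJ} --- the monomial criterion that powers Lemma \ref{notdcvgenerator} --- is no longer available, and deciding survival of classes in $B_3$ of such a quotient is essentially the original problem over again, as you partly acknowledge. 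The paper does not overcome this either; it substitutes direct machine computation for $2n=4,6$. So your reduction step is right (and more rigorous than what the paper writes down), your test-algebra idea is a reasonable but unconsummated strategy, and the status of the statement is unchanged: it remains conjectural.
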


\end{document}